\def\Cref{\ref}
\newcommand{\trivial}[2][]{\if\relax\detokenize{#1}\relax{
      \color{orange} \vspace{0em}$[$#2$]$
      \color{black}
  }\else\ifx#1h\ifcsname showtrivial\endcsname
{
    \color{orange}\vspace{0em}$[$#2$]$
    \color{black}
}\fi\else {\red Wrong argument!} \fi\fi}
\newcommand{\byhide}[2][]{\if\relax\detokenize{#1}\relax
{\color{orange} \vspace{0em} Plan to delete:  #2}
\else
\ifx#1h\relax\fi
\fi
}
\newcommand{\GL}{\mathrm{GL}}
\newcommand{\AV}{\mathrm{AV}}
\newcommand{\bfone}{\mathbf{1}}
\def\Im{\operatorname{Im}}
\def\inn#1#2{\left\langle
      \def\ta{#1}\def\tb{#2}
      \ifx\ta\@empty{\;} \else {\ta}\fi ,
      \ifx\tb\@empty{\;} \else {\tb}\fi
      \right\rangle}
\def\binn#1#2{\left\lAngle
      \def\ta{#1}\def\tb{#2}
      \ifx\ta\@empty{\;} \else {\ta}\fi ,
      \ifx\tb\@empty{\;} \else {\tb}\fi
      \right\rAngle}
\def\binn#1#2{\overline{\inn{#1}{#2}}}
\long\def\delete#1{}
\newcommand{\BC}{{\mathbb {C}}}
\newcommand{\CO}{{\mathcal {O}}}
\newcommand{\RU}{{\mathrm {U}}}
\newcommand{\sgn}{\operatorname{sgn}}
\newcommand{\oU}{\operatorname{U}}
\newcommand{\g}{\mathfrak g}
\renewcommand{\k}{\mathfrak k}
\newcommand{\h}{\mathfrak h}
\renewcommand{\u}{\mathfrak u}
\renewcommand{\l}{\mathfrak l}
\renewcommand{\t}{\mathfrak t}
\newcommand{\s}{\mathfrak s}
\newcommand{\Z}{\mathbb{Z}}
\def\C{\mathbb{C}}
\newcommand{\R}{\mathbb R}
\newcommand{\la}{\langle}
\newcommand{\ra}{\rangle}
\newcommand{\be}{\begin {equation}}
\newcommand{\ee}{\end {equation}}
\def\flushl#1{\ifmmode\makebox[0pt][l]{${#1}$}\else\makebox[0pt][l]{#1}\fi}
\def\flushr#1{\ifmmode\makebox[0pt][r]{${#1}$}\else\makebox[0pt][r]{#1}\fi}
\def\half{{\tfrac{1}{2}}}
\def\sspan{\mathrm{span}}
\def\Irr{\mathrm{Irr}}
\def\ckG{\check{G}}
\def\PP{\mathsf{PAP}}
\def\bsgn{\overline{\sgn}}
\def\Im{\mathrm{Im}}
\def\Coh{\mathrm{Coh}}
\def\yrow#1{\left[#1\right]_{\mathrm{row}}}
\def\ycol#1{\left[#1\right]_{\mathrm{col}}}
\def\Nil{{\mathrm{Nil}}}
\def\Sp{\mathrm{Sp}}
\def\Spin{\mathrm{Spin}}
\def\Unip{\mathrm{Unip}}
\def\cKgen{\cK^{\mathrm{gen}}}
\def\Unipgen{{\Unip^\mathrm{gen}_{\check \CO}}}
\def\cC{\mathcal{C}}
\def\cO{\mathcal{O}}
\def\cCb{\cC^{\mathrm b}}
\def\cCg{\cC^{\mathrm g}}
\def\bfrr{{\mathbf r}}
\def\sfH{\mathsf{H}}
\def\sfS{\mathsf{S}}
\def\sfW{\mathsf{W}}
\def\Ind{\mathrm{Ind}}
\def\ckcO{{\check{\mathcal{O}}}}
\def\cK{\mathcal{K}}
\def\bC{\mathbb{C}}
\def\fhh{\mathfrak{h}}
\def\fgg{\mathfrak{g}}
\def\Lie{\mathrm{Lie}}
\def\SO{\mathrm{SO}}
\def\abs#1{\left|{#1}\right|}
\def\bfoo{{\mathbf o}}
\def\wtG{{\widetilde G}}
\def\bC{{\mathbb C}}
\def\bZ{{\mathbb Z}}
\def\bN{{\mathbb N}}
\def\igg{\sqrt{-1} \mathfrak g^*}
\def\fnn{{\mathfrak n}}
\def\wtG{\widetilde{G}}
\def\halfone{\half\bfone}
\def\cKG{\cK(\wtG)}
\def\cKlamG{\cK_\lambda(\wtG)}
\def\cKgenG{\cK^{\mathrm{gen}}(\wtG)}
\def\cKclsG{\cK^{\mathrm{cls}}(\wtG)}
\def\ckalpha{{\check\alpha}}
\def\fsl{{\mathfrak{sl}}}
\def\ftt{{\mathfrak t}}
\def\fuu{{\mathfrak u}}
\def\fkk{{\mathfrak k}}
\def\fll{{\mathfrak l}}
\def\fss{{\mathfrak s}}
\def\fqq{{\mathfrak q}}
\def\ree{{\mathrm e}}
\def\rff{{\mathrm f}}
\def\rhh{{\mathrm h}}
\def\baroop{\overline{\bfoo'}}
\begin{document}

\title*{{Genuine special unipotent representations of spin groups}}
\author{Dan Barbasch, Jia-Jun Ma, Binyong Sun, and Chen-Bo Zhu}
\institute{Dan Barbasch \at Department of Mathematics, 310 Malott Hall, Cornell University, Ithaca, New York 14853, USA \email{dmb14@cornell.edu}
\and Jia-Jun Ma \at School of Mathematical Sciences, Xiamen University, Xiamen, 361005, China\\
Department of Mathematics, Xiamen University Malaysia campus, Sepang, Selangor Darul Ehsan, 43900,  Malaysia
\email{hoxide@xmu.edu.cn}
\and Binyong Sun\at Institute for Advanced Study in Mathematics \& New Cornerstone Science Laboratory, Zhejiang University,  Hangzhou, 310058, China \email{sunbinyong@zju.edu.cn}
\and Chen-Bo Zhu \at Department of Mathematics, National University of Singapore, 10 Lower Kent Ridge Road, Singapore 119076 \email{matzhucb@nus.edu.sg}
}
%
%

\maketitle

\hspace*{\fill}\emph{To Toshiyuki Kobayashi with friendship and admiration}\hspace*{\fill}

\vskip 20pt 

\abstract{We determine all genuine special unipotent representations of real spin groups and quaternionic spin groups, and show in particular that all of them are unitarizable. We also show that there are no genuine special unipotent representations of complex spin groups. 
} 

\section{Introduction and the main results}\label{sec:intro}

In two earlier works  \cite{BMSZ1,BMSZ2}, the authors construct and classify special unipotent representations of real classical groups (in the sense of Arthur and Barbasch-Vogan; the terminology comes from \cite{Lu}). As a direct consequence of the construction and the classification, the authors show that all special unipotent representations of real classical groups are unitarizable, as predicted by the Arthur-Barbasch-Vogan conjecture (\cite[Section 4]{ArUni}, \cite[Introduction]{ABV}). For quasi-split classical groups, the unitarity is independently established in \cite{AAM,AM}. 

In this paper we first consider a real or quaternionic spin group $G$ and determine all genuine special unipotent representations of $G$. (The analogous but simpler case of the complex spin group is briefly discussed at the end of this section.) In particular, we show that all of them are unitarizable. The paper may be considered as a companion paper of \cite{BMSZ1,BMSZ2}. 
The general strategy follows those of \cite{BMSZ1,BMSZ2} and it consists of two steps. First we use the coherent continuation representation to count the number of special unipotent representations (attached to a nilpotent $\check G$-orbit $\check \CO$ in $\check \g$; notations to follow), as in \cite{BMSZ1}. Second we construct the exact number of special unipotent representations to match the counting and arrive at the classification as a result. This second task is made significantly easier in the current case of spin groups, due to the fact that there are only a very small number of genuine special unipotent representations. In the case of real classical groups, both tasks of the construction and distinguishing the constructed representations pose significant challenges, which we overcome in \cite{BMSZ2} by Howe's method of theta lifting \cite{Howe79,Howe89} and Vogan's theory of associated cycles \cite{Vo89}.  

\medskip
\medskip
Let $G_{\bC} = \Spin(m,\bC)$ ($m\geq 2$) be the complex spin group. Its rank is $n:=\lfloor \frac{m}{2}\rfloor$. Let $G$ be a real form of $G_\bC$, namely $G$ is the fixed point group of an involutive anti-holomorphic automorphism $\sigma$ of $G_\bC$. Up to conjugation by $G_\bC$, every real form of $G_\bC$ is a real spin group or a quaternionic spin group. We may thus assume without loss of generality  that  $G$ is the real spin group $\Spin(p,q)$ ($p+q=m$), or the quaternionic spin group $\Spin^*(2n)$ (when $m$ is even).  We will be concerned with special unipotent representations of $G$ (in the sense of Arthur and Barbasch-Vogan; see \cite{BVUni,ABV}). 

It will be convenient to introduce a slightly larger group $\widetilde G$. Consider the natural exact sequence 
\[
 \{1\}\rightarrow \{\pm 1\}\rightarrow G_\bC\rightarrow \SO(m,\bC)\rightarrow \{1\}. 
\]
The automorphism $\sigma$ descends to an 
involutive anti-holomorphic automorphism  of $\SO(m,\bC)$, to be denoted by $\sigma_0$. Let $G_0$ denote the fixed point group of $\sigma_0$, and write $\widetilde G$ for the preimage of $G_0$ under the covering homomorphism $G_\bC\rightarrow \SO(m,\bC)$. Then $G_0$ is a real special orthogonal group $\SO(p,q)$ ($p+q=m$), or a quaternionic orthogonal group $\SO^*(2n)$ (when $m$ is even). We summarize with an exact sequence
\[
 \{1\}\rightarrow \{\pm 1\}\rightarrow \widetilde G\rightarrow G_0\rightarrow \{1\}, 
\]
where $G_0=\SO(p,q)$ or $\SO^*(2n)$.  

If $G=\Spin(p,q)$ and $pq=0$, or $G=\Spin^*(2n)$, then $\widetilde G=G$. Otherwise, $\widetilde G$ contains $G$ as a subgroup of index $2$.

We first consider representations of $\widetilde G$ instead of $G$, and we then relate the representation theory of $G$ to that of $\widetilde G$ by Clifford theory.

We will work in the category of Casselman-Wallach representations \cite[Chapter 11]{Wa2}. An irreducible Casselman-Wallach representation of $\widetilde G$ either descends to a representation of $G_0$ or is genuine in the sense that the central subgroup $\{\pm 1\}\subset \widetilde G$ (the kernel of the covering homomorphism $\widetilde G \rightarrow G_0$) acts via the non-trivial character. (The notion of ``genuine" will be used in some similar situations without further explanation.) Since special unipotent representations of $G_0$ have been classified \cite{BMSZ1,BMSZ2}, we will focus on genuine special unipotent representations of $\widetilde G$.

The Langlands dual of $G$ is 
\begin{equation}\label{eq:Langlands}
 \check G:=\begin{cases}
    \SO(m,\bC)/\{\pm 1\}, &  \text{if $m$ is even}; \\
    \mathrm{Sp}(m-1,\bC)/\{\pm 1\}, &  \text{if $m$ is odd}. 
  \end{cases}
\end{equation}
Denote by $\check \g$ the Lie algebra of $\check G$. 

Let $\check \CO$ be a nilpotent $\check G$-orbit in $\check \g$ (see \cite{CM} for basic facts about nilpotent orbits). Denote by  $\Unip^{\mathrm{gen}}_\ckcO(\widetilde G)$ the set of isomorphism classes of genuine special unipotent representations of $\widetilde G$ attached to $\check \CO$. We refer the reader to \cite[Section 2]{BMSZ1} for a comprehensive discussion of special unipotent representations in the case of real classical groups. 

\delete{ 
 \begin{equation}\label{eq:defuni}
        \Unip^{\mathrm{gen}}_{\check \CO}(\widetilde G):= 
       \{\pi\in \Irr(\widetilde G)\mid \pi \textrm{ is genuine  and annihilated by } I_{\check \CO}\}.
\end{equation}
}

Denote by $\g$ and $\g_\C$ the Lie algebras of $G$ and $G_\bC$, respectively. Let $\CO\subset \g_\bC^*$ denote the Barbasch-Vogan dual of $\check \CO$ (see \cite[Appendix]{BVUni} and \cite[Section 1]{BMSZ0}). 

Our first result will count the set  $\Unipgen(\widetilde G)$. 
As usual we will not distinguish between a nilpotent orbit $\check \CO$ and its Young diagram  (when there is no confusion). Recall that $\check \CO$ is called very even if all its row lengths are even. 

\begin{theorem}\label{thm:Spinpq0}
(a) If some row length of $\check \CO$ occurs with odd multiplicity, then the set $\Unipgen(\widetilde G)$ is empty. 

\noindent (b) Suppose that all row lengths of $\check \CO$ occur with even multiplicity. Then 
\begin{eqnarray*}
&&\sharp(\Unipgen(\widetilde G))\\
&=&
\begin{cases}
    1, &  \text{if $G=\Spin(p,q)$, $\abs{p-q}=1$}; \\
    1, &  \text{if $G=\Spin(p,q)$, $p=q$ and $\check \CO$ is very even}; \\
     2, &  \text{if $G=\Spin(p,q)$, $p=q$ and $\check \CO$ is not very even}; \\
      \sharp (\widetilde G\backslash \sqrt{-1} \g^* \cap \CO), &  \text{if $G=\Spin^*(2n)$ and $\check \CO$ is very even}; \\
       0, &  \text{otherwise}.
  \end{cases}
\end{eqnarray*}
\end{theorem}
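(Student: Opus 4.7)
The proof follows the coherent continuation strategy of \cite{BMSZ1} and proceeds in two stages.

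\emph{Stage 1 (infinitesimal character obstruction).} Every representation in $\Unipgen(\widetilde G)$ has infinitesimal character $\lambda_{\check \CO}$, namely one-half of the neutral element of an $\mathfrak{sl}_2$-triple through a representative of $\check \CO$. The central character of such a representation is determined by $\lambda_{\check \CO}$ modulo the root lattice, and for the representation to be genuine, this central character must act non-trivially on the kernel of the covering homomorphism $\widetilde G \to G_0$. I will write $\lambda_{\check \CO}$ explicitly in standard coordinates using the row lengths of $\check \CO$, and then check directly that the resulting integrality/parity condition on $\lambda_{\check \CO}$ is satisfied precisely when every row length of $\check \CO$ occurs with even multiplicity. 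This proves (a) and reduces (b) to the even-multiplicity hypothesis.

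\emph{Stage 2 (coherent continuation count).} Under the even-multiplicity assumption, I apply the counting machinery of \cite[Section 4]{BMSZ1} in its genuine version: $\sharp\,\Unipgen(\widetilde G)$ is expressed as the multiplicity of a specific Springer-type representation inside the coherent continuation representation $\Coh_{[\lambda_{\check \CO}]}(\widetilde G)$, viewed as a module over the integral Weyl group $W_{[\lambda_{\check \CO}]}$. The even-multiplicity hypothesis identifies $W_{[\lambda_{\check \CO}]}$ as a product of symmetric groups, one for each distinct row length of $\check \CO$, each acting on its (even-sized) set of rows. Case-by-case evaluation then yields the claimed values: for $\Spin(p,q)$ with $|p-q|=1$ the quasi-split type-$B$ structure contributes $1$; for $\Spin(p,q)$ with $p=q$ the split type-$D$ structure contributes $1$ or $2$ according to whether $\check \CO$ is very even (the two distinct labels in the non-very-even case corresponding to the pair exchanged by the outer $D_n$ automorphism); for $\Spin^*(2n)$ the multiplicity reduces, via Barbasch--Vogan duality and Vogan's associated variety theory, to the geometric count $\sharp(\widetilde G \backslash \sqrt{-1}\g^* \cap \CO)$, which vanishes unless $\check \CO$ is very even; and in all remaining cases the multiplicity is zero because the real structure of the non-quasi-split real form cannot support the relevant isotypic component.

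The principal technical hurdle is the $\Spin^*(2n)$ very-even case: matching the abstract Weyl-group multiplicity with the geometric count $\sharp(\widetilde G \backslash \sqrt{-1}\g^* \cap \CO)$ will require a careful decomposition of $\Coh_{[\lambda_{\check \CO}]}(\widetilde G)$ under $W_{[\lambda_{\check \CO}]}$, combined with the signed-Young-tableaux classification of $\SO^*(2n)$-orbits in the nilpotent cone meeting $\CO$. I expect this matching step to be the most delicate part of the argument, whereas the remaining cases are essentially parallel to the analogous (but larger) computations for $\SO(p,q)$ and $\SO^*(2n)$ carried out in \cite{BMSZ1}.
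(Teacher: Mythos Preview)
Your Stage~1 rests on a false premise: the central character of an irreducible representation is \emph{not} determined by its infinitesimal character (already for $\mathrm{SL}(2,\R)$, the even and odd principal series share an infinitesimal character but differ at the central element $-I$). Thus one cannot read off genuineness from $\lambda_{\check \CO}$, and part~(a) does not follow from a parity check on $\lambda_{\check \CO}$. In the paper, part~(a) is not isolated as a separate obstruction; it emerges from the same multiplicity computation that yields part~(b).

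Your Stage~2 also has two substantive gaps. First, the integral Weyl group is a product of Weyl groups of type $B$ or $D$ (namely $\sfW_{n_{\mathrm b}}\times\sfW_{n_{\mathrm g}}$ or $\sfW'_{n_{\mathrm b}}\times\sfW'_{n_{\mathrm g}}$, according to the split of the coordinates of $\lambda_{\check \CO}$ into half-integers and integers), not a product of symmetric groups indexed by distinct row lengths. Second, and more importantly, you are missing the paper's key reduction (Lemma~\ref{lem:iso} and Proposition~\ref{counting}): rather than analysing the genuine coherent continuation representation of $\widetilde G$ directly, one observes that tensoring with genuine finite-dimensional representations of $G_\bC$ identifies it, as a $W(\Lambda)$-module, with the coherent continuation representation of the \emph{linear} group $G_0$ based on the \emph{shifted} coset $\Lambda_1=\lambda_{\check \CO}+\tfrac12\mathbf{1}+Q_0$. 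This converts the genuine count for $\widetilde G$ into a non-genuine count for $G_0$ at a different infinitesimal character, to which the combinatorics of \cite{BMSZ1} (Lusztig left cells, painted bipartitions, Littlewood--Richardson rules) applies verbatim. The vanishing in part~(a), the factor of $2$ when $p=q$ and $\check \CO$ is not very even (coming from the identity $\cCb_{D,n_{\mathrm g}}\otimes\varepsilon\cong\cCb_{D,n_{\mathrm g}}$, not from the outer automorphism of $D_n$), and the match with $\sharp(\widetilde G\backslash\sqrt{-1}\g^*\cap\CO)$ for $\Spin^*(2n)$ all fall out of these explicit multiplicity computations.
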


Here and henceforth, $\sharp$ indicates the cardinality of a finite set.

Note that when $m$ is even and $\check \CO$ is very even, there are actually two nilpotent $\check G$-orbits in $\check \g$ that have the same Young diagram as $\check \CO$. The two orbits are given labels $I/II$ as $\check \CO_I$ and $\check \CO_{II}$. Write $\CO_{I}$ and $\CO_{II}$  respectively for their Barbasch-Vogan duals. If $G=\Spin^*(2n)$ and $\check \CO$ is very even, then among  the sets $\sqrt{-1} \g^* \cap \CO_I$ and $\sqrt{-1} \g^* \cap \CO_{II}$, one is empty and the other is not. We will use the convention for the labels so that 
\begin{equation}\label{def:relevant}
\sqrt{-1}\g^*\cap \cO_I\neq \emptyset
\end{equation}
and we say $\ckcO_I$ is $G$-relevant. We refer the reader to \cite[Section 2.6]{BMSZ1} for the  notion of $G$-relevance in a more general context. 

\trivial[h]{
Let $B$ be the minimal parabolic subgroup of $G$, then $\cO_I$ is characterized by $\cO_i \cap \Lie(B)\neq \emptyset$. (Since $B$ are unique up to $G$-conjugate, we see that the condition is independent of the choice of $B$.  
}

Let $\widetilde{\mathrm{sgn}}$ denote the quadratic character on $\widetilde G$ whose kernel equals $G$. 

\begin{theorem}\label{thm:Spinpq03}
Let $\pi\in \Unipgen(\widetilde G)$. If $G=\Spin(p,q)$ and $p=q$ is odd, then $\pi\otimes \widetilde{\mathrm{sgn}}$ is not isomorphic to $\pi$. In all other cases, $\pi\otimes \widetilde{\mathrm{sgn}}$ is  isomorphic to $\pi$. 
\end{theorem}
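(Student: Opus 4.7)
The plan is to reduce the statement to the three genuinely nontrivial situations isolated by Theorem~\ref{thm:Spinpq0} and handle them via (i) a counting shortcut, (ii) a central character argument, and (iii) an explicit construction. Throughout, note that $\widetilde{\sgn}$ is trivial on the central $\{\pm 1\}$ and preserves both the infinitesimal character and the annihilator ideal of a representation, so twisting by $\widetilde{\sgn}$ defines an involution on $\Unipgen(\widetilde G)$. When $G = \widetilde G$ (i.e., $G = \Spin^*(2n)$ or $G = \Spin(p,q)$ with $pq = 0$) the character $\widetilde{\sgn}$ is trivial and there is nothing to prove, so I may assume $G = \Spin(p,q)$ with $pq \neq 0$, i.e.\ $[\widetilde G : G] = 2$.

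The first move is to dispose of the cases where $\sharp \Unipgen(\widetilde G) \leq 1$. By Theorem~\ref{thm:Spinpq0}, these are precisely $|p-q| = 1$ and the subcase ``$p = q$ with $\check \CO$ very even'', and the very-even condition forces $4 \mid m$ and hence $p$ even. An involution on a one-element set is the identity, so $\pi \otimes \widetilde{\sgn} \cong \pi$ is automatic.

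For $p = q$ odd (where Theorem~\ref{thm:Spinpq0} gives exactly two representations, since very-evenness is impossible), the scalar $-I_{2p} \in \SO(p,p)$ reverses the orientation of the maximal positive-definite subspace $\R^p$ (as $p$ is odd), whence $-I_{2p} \in \SO(p,p) \setminus \SO^0(p,p)$ and any preimage $z \in \widetilde G$ lies in $Z(\widetilde G) \setminus G$; in particular $\widetilde{\sgn}(z) = -1$. For any $\pi \in \Unipgen(\widetilde G)$ with central character $\chi_\pi$ one then has
\[
(\pi \otimes \widetilde{\sgn})(z) = -\chi_\pi(z)\cdot \mathrm{id} \neq \chi_\pi(z) \cdot \mathrm{id} = \pi(z),
\]
so $\pi \otimes \widetilde{\sgn} \not\cong \pi$, as claimed.

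The remaining case, $p = q$ even with $\check \CO$ not very even (where $\sharp \Unipgen(\widetilde G) = 2$), is where I expect the hard work. Here $p$ even forces $-I_{2p} \in \SO^0(p,p)$, so $Z(\widetilde G) \subset G$ and the central-character shortcut is unavailable, yet I must show that \emph{both} elements of $\Unipgen(\widetilde G)$ are fixed by $\widetilde{\sgn}$. The plan is to invoke the explicit construction of the two representations that underlies Theorem~\ref{thm:Spinpq0} (parabolic induction from a carefully chosen cuspidal datum on a Levi, or a theta lift from a metaplectic dual pair, in the spirit of \cite{BMSZ2}) and to verify that tensoring with $\widetilde{\sgn}$ corresponds to a symmetry of the inducing data that fixes each resulting representation; equivalently, to show that $\pi|_G$ splits as a sum of two non-isomorphic irreducibles, so Clifford theory gives $\pi \cong \pi \otimes \widetilde{\sgn}$. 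The main obstacle is precisely this last step: with no central element available to separate $\pi$ from $\pi \otimes \widetilde{\sgn}$, and no outer invariant on the Langlands parameter doing the job, one must extract the conclusion from the geometric/constructive realization of $\pi$, carefully tracking the spin cover throughout.
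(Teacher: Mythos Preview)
Your treatment of the cases $\sharp\Unipgen(\widetilde G)\le 1$ and of $p=q$ odd is correct and in line with the paper. The gap is in the case $p=q$ even with $\check\CO$ not very even: you correctly flag it as unfinished, but you also dismiss the central-character approach too quickly, and that is exactly the tool the paper uses.

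In the paper, Theorem~\ref{thm:Spinpq03} is deduced from part~(a) of Theorem~\ref{unip0}. The proof of the latter (Lemma~\ref{lemwmf33}, resting on Lemma~\ref{lemf2}) shows that when $\sharp\Unipgen(\widetilde G)=2$ the two representations are $I(\chi_1)$ and $I(\chi_2)$, and that they are distinguished by the action of the central subgroup $Z\subset\widetilde G$ (the preimage of $\{\pm I\}\subset\SO(p,p)$): $Z$ acts on $I(\chi_i)$ through $\chi_i|_Z$, and $\chi_1|_Z\neq\chi_2|_Z$. Twisting by $\widetilde{\sgn}$ multiplies this $Z$-character by $\widetilde{\sgn}|_Z$. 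Your own computation shows that $-I\in\SO^0(p,p)$ iff $p$ is even, so $Z\subset G$ and $\widetilde{\sgn}|_Z$ is trivial precisely when $p$ is even. Hence for $p$ even the $Z$-character is unchanged and $\pi\otimes\widetilde{\sgn}\cong\pi$; for $p$ odd the $Z$-characters are swapped and $\pi\otimes\widetilde{\sgn}\not\cong\pi$. So the ``central-character shortcut'' is not unavailable in the even case --- it is exactly what finishes the argument, once you have the structural input that the two unipotent representations are separated by $Z$. No theta-lift or deeper geometric analysis is needed.
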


Similar to $\Unipgen(\widetilde G)$, define the set $\Unipgen(G)$ of isomorphism classes of genuine special unipotent representations of $G$ attached to $\check \CO$. 
By Clifford theory and Theorem \ref{thm:Spinpq03}, Theorem \ref{thm:Spinpq0} implies the  following counting result for $G$. 

\begin{theorem}\label{thm:Spinpq4}
(a) If some row length of $\check \CO$ occurs with odd multiplicity, then the set $\Unipgen(G)$ is empty. 

\noindent (b) Suppose that all row lengths of $\check \CO$ occur with even multiplicity. Then 
\begin{eqnarray*}
&&\sharp(\Unipgen(G))\\
&=&
\begin{cases}
    2, &  \text{if $G=\Spin(p,q)$, $\abs{p-q}=1$}; \\
    2, &  \text{if $G=\Spin(p,q)$, $p=q$ is even and $\check \CO$ is very even}; \\
     4, &  \text{if $G=\Spin(p,q)$, $p=q$ is even and $\check \CO$ is not very even}; \\
      1, &  \text{if $G=\Spin(p,q)$, $p=q$ is odd}; \\
      \sharp ( G\backslash \sqrt{-1} \g^* \cap \CO), &  \text{if $G=\Spin^*(2n)$ and $\check \CO$ is very even}; \\
       0, &  \text{otherwise}.
  \end{cases}
\end{eqnarray*}
\end{theorem}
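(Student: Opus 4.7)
The plan is to deduce Theorem~\ref{thm:Spinpq4} from the parallel counting statement of Theorem~\ref{thm:Spinpq0} and the sign-twist behavior of Theorem~\ref{thm:Spinpq03}, via Clifford theory applied to the (at most index-two) inclusion $G \subseteq \widetilde{G}$. When $\widetilde{G} = G$---i.e.\ $G = \Spin^*(2n)$ or $G = \Spin(p,q)$ with $pq = 0$---we have $\Unipgen(G) = \Unipgen(\widetilde{G})$, so the $\Spin^*(2n)$ entry and the relevant ``otherwise'' entry of Theorem~\ref{thm:Spinpq4} follow directly from Theorem~\ref{thm:Spinpq0}. The remaining case is $G = \Spin(p,q)$ with $pq > 0$, in which $[\widetilde{G}:G] = 2$ and $\widetilde{G}/G$ is generated by $\widetilde{\mathrm{sgn}}$.

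Standard Clifford theory for an index-two subgroup furnishes, for each irreducible Casselman--Wallach $\pi$ of $\widetilde{G}$, a dichotomy: either (i) $\pi \otimes \widetilde{\mathrm{sgn}} \cong \pi$ and $\pi|_G = \pi_+ \oplus \pi_-$ is a direct sum of two non-isomorphic irreducibles, with $\pi$ the unique extension of each summand back to $\widetilde{G}$; or (ii) $\pi \otimes \widetilde{\mathrm{sgn}} \not\cong \pi$, and then $\pi|_G$ is irreducible with $(\pi \otimes \widetilde{\mathrm{sgn}})|_G \cong \pi|_G$. Since $\widetilde{\mathrm{sgn}}$ has trivial derivative, twisting by it preserves both genuineness and the annihilator $I_{\check{\CO}} \subseteq U(\fgg_{\bC})$, so $\Unipgen(\widetilde{G})$ is stable under $\pi \mapsto \pi \otimes \widetilde{\mathrm{sgn}}$; conversely, any $\sigma \in \Unipgen(G)$ arises as a summand of $\pi|_G$ for some $\pi \in \Unipgen(\widetilde{G})$.

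Applying Theorem~\ref{thm:Spinpq03}: when $|p-q|=1$ or $p=q$ is even we are in alternative~(i), and each element of $\Unipgen(\widetilde{G})$ contributes two distinct elements of $\Unipgen(G)$, with distinct $\pi$'s giving disjoint pairs by the uniqueness-of-extension clause; doubling the counts $1,1,2$ from Theorem~\ref{thm:Spinpq0} then yields the entries $2,2,4$ of Theorem~\ref{thm:Spinpq4}. When $p = q$ is odd we are in alternative~(ii); crucially, since $n = p$ is odd, no very even orbit in $D_n$ exists (a partition with all parts even and each part appearing with even multiplicity would force $4 \mid 2n$), so Theorem~\ref{thm:Spinpq0} gives the count $2$ for $\Unipgen(\widetilde{G})$, and the two elements form a single $\widetilde{\mathrm{sgn}}$-orbit contributing one element of $\Unipgen(G)$. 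The ``otherwise'' count $0$ is inherited directly.

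The most delicate point is the reverse direction of the Clifford correspondence used above---that every $\sigma \in \Unipgen(G)$ arises from some $\pi \in \Unipgen(\widetilde{G})$---which amounts to checking that an irreducible Casselman--Wallach extension of $\sigma$ to $\widetilde{G}$ inherits both genuineness (automatic since $\{\pm 1\} \subset G$) and annihilation by $I_{\check{\CO}}$ (automatic since $I_{\check{\CO}}$ is defined purely in terms of $\fgg_{\bC}$). With this in place, the doubling-or-halving analysis above gives the claimed counts with no residual ambiguity.
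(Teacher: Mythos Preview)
Your proof is correct and follows precisely the approach indicated in the paper, which derives Theorem~\ref{thm:Spinpq4} from Theorems~\ref{thm:Spinpq0} and~\ref{thm:Spinpq03} via Clifford theory for the index-two inclusion $G\subset\widetilde G$. You have simply made explicit what the paper leaves as a one-line remark, including the observation that no very even $\check\CO$ exists when $p=q$ is odd (so Theorem~\ref{thm:Spinpq0} yields the count $2$ there), and the verification that restriction and induction between $G$ and $\widetilde G$ preserve membership in the special unipotent set.
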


We aim to describe all representations in $\Unipgen(\widetilde G)$ and $\Unipgen(G)$. 
In the rest of this introduction, we assume  that the set $\Unipgen(\widetilde G)$ is non-empty, or equivalently, the set $\Unipgen(G)$ is non-empty. Theorem \ref{thm:Spinpq0} in particular implies that all row lengths of $\check \CO$ occur with even multiplicity. 
Write 
\[
r_1= r_1\geq  r_2= r_2\geq  \dots\geq  r_k=r_k>0\quad  (k\geq 1)
\]
for the row lengths of $\check \CO$. 

As usual, let $\mathbb H$ denote the division algebra of Hamilton's quaternions. We omit the proof of the following elementary lemma. 
\begin{lemma}\label{lemf}
Up to conjugation by $G_0$, there is a unique Levi subgroup $L_0$ of $G_0$ with the following properties:
\begin{enumerate}[label=(\alph*)]
\item 
\[
L_0\cong \begin{cases}
    \mathrm{GL}_{r_1}(\R)\times \mathrm{GL}_{r_2}(\R)\times \dots \times \mathrm{GL}_{r_k}(\R), &  \text{if $G_0=\SO(p,q)$};\\
    \mathrm{GL}_{\frac{r_1}{2}}(\mathbb H)\times \mathrm{GL}_{\frac{r_2}{2}}(\mathbb H)\times \dots \times \mathrm{GL}_{\frac{r_k}{2}}(\mathbb H), &  \text{if $G_0=\SO^*(2n)$}; 
  \end{cases}
\]
\item the orbit $\CO$ equals the induction of the zero orbit from $\l_\bC$ to $\g_\bC$, where $\l_\bC$ denotes the complexified Lie algebra of $L_0$. 
\end{enumerate}

\end{lemma}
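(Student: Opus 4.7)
The plan is to verify the lemma by explicit construction, treating the cases $G_0 = \SO(p,q)$ and $G_0 = \SO^*(2n)$ separately. In each case, I would proceed in three steps: (1) construct an $L_0$ satisfying condition (a); (2) argue uniqueness up to $G_0$-conjugacy via a standard Witt-extension argument; and (3) verify (b) by computing the induced nilpotent orbit and comparing it with the Barbasch-Vogan dual of $\check\CO$.

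For the construction of $L_0$: In the $\SO(p,q)$ case, I would decompose the defining quadratic space as an orthogonal sum $\R^{p,q} = H_1 \perp \dots \perp H_k \perp R$, where each $H_i$ has signature $(r_i, r_i)$ and $R$ is a remainder (zero-dimensional when $m$ is even with $p=q$, and one-dimensional when $|p-q|=1$). Each $\GL_{r_i}(\R)$ factor would be realized as the stabilizer in $\SO(H_i)$ of a fixed pair of transverse maximal totally isotropic subspaces of $H_i$, and $L_0$ is taken as their product (the remainder contributes only the trivial group in $\SO(R)$). The quaternionic case is analogous: decompose the skew-Hermitian quaternionic space $\mathbb{H}^n$ as an orthogonal sum of quaternionic hyperbolic blocks of dimensions $r_i/2$ (the hypothesis that all row lengths of $\check\CO$ have even multiplicity, combined with very-evenness of $\check\CO$ in this case, guarantees that each $r_i$ is even), and realize each $\GL_{r_i/2}(\mathbb{H})$ factor as the stabilizer of a pair of transverse maximal totally isotropic quaternionic subspaces.

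Uniqueness up to $G_0$-conjugacy reduces, by standard Levi theory for real classical groups, to the statement that any two orthogonal decompositions of $\R^{p,q}$ (resp.\ $\mathbb{H}^n$) of the specified isotype are related by an element of $G_0$; this in turn follows from Witt's extension theorem for the ambient nondegenerate form.

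For part (b), one computes both sides and compares. On one hand, the induction of the zero orbit from a Levi of type $\GL_{r_1} \times \dots \times \GL_{r_k}$ in $\fso_m(\bC)$ (or $\fsp_{m-1}(\bC)$ in the odd case) is a Richardson orbit with a well-known partition, obtained via the standard recipe in Collingwood-McGovern. On the other hand, the Barbasch-Vogan dual of the partition $(r_1, r_1, r_2, r_2, \dots, r_k, r_k)$ is essentially its transpose, with a possible type-dependent modification on the last column; a direct combinatorial check shows that the transpose of this paired partition is automatically very even and matches the Richardson partition above (after the appropriate adjustment for odd $m$). The main, though still elementary, obstacle is bookkeeping the parity/remainder adjustments arising from the $|p-q|=1$ case and from the distinction between orthogonal and symplectic $\check G$, but since both the Richardson recipe and the duality formula are explicit, this is a routine verification — consistent with the authors' assessment of the lemma as elementary.
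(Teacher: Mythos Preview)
The paper omits the proof of this lemma, so there is nothing to compare against; however, your three-step outline has a genuine gap in step~(2). Witt's extension theorem only gives transitivity of the full isometry group $\O(p,q)$ on decompositions of a given isotype, not of $G_0=\SO(p,q)$. In the case $p=q$ with $\check\CO$ very even (so every $r_i$ is even), this distinction matters: the Levi $L_0$ stabilizes the maximal isotropic subspace $X_1\oplus\dots\oplus X_k$ of dimension $n$, and $\SO(n,n)$ has two orbits on such subspaces (exchanged by any element of $\O(n,n)\setminus\SO(n,n)$). The block swap $X_i\leftrightarrow X_i'$ that normalizes $L_0$ has determinant $(-1)^{r_i}=1$, so it does not help you pass between the two families. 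Consequently there are \emph{two} $\SO(n,n)$-conjugacy classes of Levi subgroups satisfying~(a), exactly as the remark immediately following the lemma warns. Your argument ``uniqueness from (a) via Witt, then check (b) separately'' therefore cannot succeed as stated: in this case it is precisely condition~(b) that selects one of the two classes (the one inducing the chosen orbit $\CO\in\{\CO_I,\CO_{II}\}$), so (b) must enter the uniqueness argument, not be verified afterwards.

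A smaller slip: in your discussion of~(b) you write ``induction of the zero orbit from a Levi \dots\ in $\fso_m(\bC)$ (or $\fsp_{m-1}(\bC)$ in the odd case)''. The ambient algebra $\g_\bC$ is always $\fso_m(\bC)$; it is $\check\g$ that is symplectic when $m$ is odd. The Richardson computation takes place entirely on the $\g_\bC$ side.
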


We remark that the second condition in the above lemma  is implied by the first one, unless $G_0=\SO(p,q)$, $p=q$, and $\check \CO$ is very even. 

Let $L_0$ be as in Lemma \ref{lemf} and write  $\widetilde L$ for the preimage of $L_0$ under the covering homomorphism $G_\bC\rightarrow \SO(m,\bC)$. 
Write $\mathrm N_{\widetilde G}(\widetilde L)$ for the normalizer of $\widetilde L$ in $\widetilde G$.

\begin{lemma}\label{lemf2}
(a) Suppose that $G_0=\SO(p,q)$. If either $\abs{p-q}=1$, or $p=q$ and $\check \CO$ is very even, then up to conjugation by $\mathrm N_{\widetilde G}(\widetilde L)$, there exists a unique genuine character on $\widetilde L$ of finite order. 

\noindent (b) Suppose that $G_0=\SO(p,q)$. If $p=q$ and $\check \CO$ is not very even, then up to conjugation by $\mathrm N_{\widetilde G}(\widetilde L)$, there are precisely two   genuine characters on $\widetilde L$ of finite order.

\noindent (c) Suppose that $G_0=\SO^*(2n)$. Then the covering homomorphism $\widetilde L\rightarrow L_0$ uniquely splits, and $\widetilde L$ has a   unique genuine character of finite order. 
 
\end{lemma}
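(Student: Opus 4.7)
The plan is to describe $\widetilde L$ explicitly, enumerate its finite-order genuine characters, and then count orbits under $\mathrm N_{\widetilde G}(\widetilde L)/\widetilde L = \mathrm N_{G_0}(L_0)/L_0$.

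For part (c), the group $L_0 = \prod_i \mathrm{GL}_{r_i/2}(\mathbb{H})$ deformation retracts onto $\prod_i \mathrm{Sp}(r_i/2)$, which is simply connected. Hence $L_0$ is itself simply connected, and the covering $\widetilde L \to L_0$ splits uniquely, giving $\widetilde L \cong L_0 \times \{\pm 1\}$. A finite-order genuine character is therefore of the form $\eta \boxtimes \varepsilon$ for $\eta$ a finite-order character of $L_0$ and $\varepsilon$ the nontrivial character of $\{\pm 1\}$. Since the abelianization of each $\mathrm{GL}_{r_i/2}(\mathbb{H})$ is $\R_{>0}$ via the reduced norm, the only finite-order character of $L_0$ is trivial, yielding a unique finite-order genuine character on $\widetilde L$; no further quotient is needed.

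For parts (a) and (b), I would adopt the standard concrete model for the restriction of the spin cover,
\[
\widetilde{\mathrm{GL}}_{r_i}(\R) = \{(g, z) \in \mathrm{GL}_{r_i}(\R) \times \bC^\times \mid z^2 = \det g\}
\]
inside $\Spin(r_i, r_i)$, and realize $\widetilde L$ as the fibre product of these covers over their common central $\{\pm 1\}$. On each factor, $\chi_0^{(i)}(g,z) := z/|z|$ is a genuine character whose square is $\mathrm{sgn}\circ\det$, so it has order $4$, and $\{\chi_0^{(i)},\, \chi_0^{(i)}\cdot (\mathrm{sgn}\circ\det)\}$ exhausts the finite-order genuine characters of $\widetilde{\mathrm{GL}}_{r_i}(\R)$. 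This gives $2^k$ finite-order genuine characters on $\widetilde L$, naturally parametrized by $(s_1, \ldots, s_k) \in \{0,1\}^k$.

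Finally, I compute the action of $\mathrm N_{G_0}(L_0)/L_0$, which is generated by permutations of equal-size $\mathrm{GL}_{r_i}(\R)$ factors and by flips $g \mapsto (g^t)^{-1}$ on individual factors. A direct calculation (the flip sends $(g,z)$ to $((g^t)^{-1}, \pm z^{-1})$, so the character $z/|z|$ is sent to its inverse) shows the flip on factor $i$ swaps $\chi_0^{(i)}$ with $\chi_0^{(i)} \cdot (\mathrm{sgn}\circ\det)$, toggling $s_i$. The main obstacle is then the bookkeeping of which such elements actually lie in $\SO(p,q)$ rather than $\oO(p,q)$: the flip on $\mathrm{GL}_{r_i}(\R)$ has determinant $(-1)^{r_i}$ in $\oO(p,q)$. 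In the case $|p-q|=1$, the $1$-dimensional anisotropic complement supplies a free $\pm 1$ to absorb any determinant discrepancy, so every individual flip is realizable in $\SO(p,q)$ and all $s_i$ can be toggled independently, giving one orbit. In the case $p=q$ with $\check\CO$ very even, every $r_i$ is even and each flip already has determinant $+1$; again all toggles are available, giving one orbit. In the case $p=q$ with $\check\CO$ not very even, a product of flips lies in $\SO(p,q)$ iff $\sum_{i\text{ flipped}} r_i$ is even, whence $\sum_i s_i r_i \pmod 2$ is a complete invariant, distinguishing exactly two orbits. These three cases together yield parts (a) and (b).
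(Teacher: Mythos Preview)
Your proof is correct and considerably more detailed than the paper's, which is almost entirely omitted: the paper only establishes the lower bound in case~(b) by observing that the preimage $Z$ of $\{\pm 1\}\subset\SO(p,p)$ is a central subgroup of $\widetilde G$ contained in $\widetilde L$, and that (when $\check\CO$ is not very even) two genuine finite-order characters can be found with distinct restrictions to $Z$, hence cannot be $\mathrm N_{\widetilde G}(\widetilde L)$-conjugate. Everything else---the upper bounds in~(b), all of~(a), and all of~(c)---is declared ``easy'' and left to the reader.

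Your approach is a direct and complete computation: you model $\widetilde L$ explicitly via the determinant-square-root cover, enumerate the $2^k$ genuine finite-order characters, and determine the orbit structure under the relative Weyl group. Your invariant $\sum_i s_i r_i \pmod 2$ in case~(b) is in fact the same datum as the paper's restriction to $Z$: the central element $-I\in\SO(p,p)$ lifts to $((-I_{r_i},z_i))_i$ with $z_i^2=(-1)^{r_i}$, and evaluating your characters there recovers exactly this parity. So the two arguments are compatible, but yours supplies the matching upper bound that the paper leaves implicit.

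Two small points worth tightening. First, the identification of the spin cover restricted to each $\mathrm{GL}_{r_i}(\R)$ with the determinant-square-root cover is standard (visible from the weights of the spin representation), but you assert it without justification; a one-line reference would be appropriate. Second, in your sign analysis of the flip you write $z\mapsto\pm z^{-1}$: only the choice $z\mapsto z^{-1}$ defines a group automorphism of $\widetilde{\mathrm{GL}}_r(\R)$, so the ambiguity resolves itself, but it would be cleaner to say so. Neither affects the validity of the argument.
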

\begin{proof}
  Suppose that $G_0=\SO(p,q)$ and $p=q$. Let $Z$ denote the inverse image of $\{\pm 1\}$ under the covering homomorphism $\widetilde G\rightarrow \SO(p,p)$. Then $Z$ is a central subgroup of $\widetilde G$ which is contained in $\widetilde L$. Moreover, 
\[
  Z\cong \begin{cases}
    \Z/2\Z\times \Z/2\Z, &  \text{if $p$ is even}; \\
   \Z/4\Z, &  \text{if $p$ is odd}. 
  \end{cases}
\]
When $\check \CO$ is not very even, it is easy to see that there are two genuine characters on $\widetilde L$ of finite order that have distinct restrictions to $Z$. Therefore up to conjugation by $\mathrm N_{\widetilde G}(\widetilde L)$, there are at least two   genuine characters on $\widetilde L$ of finite order. The rest of the lemma is easy to prove and we omit the details. 
\end{proof}

Let $\widetilde P$ be a parabolic subgroup of $\widetilde G$ containing $\widetilde L$ as a Levi factor. 
Let $\chi$ be a genuine character on $\widetilde L$ of finite order, to be viewed as a character of $\widetilde P$ as usual. Put
\[
I(\chi):=\Ind_{\widetilde P}^{\widetilde G} \chi \qquad (\textrm{normalized smooth parabolic induction}).
\]
This is called a degenerate principal series representation. 

\begin{theorem}\label{unip0}
   (a) Suppose that $G=\Spin(p,q)$. Then $I(\chi)$ is irreducible and belongs to $\Unipgen(\widetilde G)$. Moreover
  \[
  \Unipgen(\widetilde G)=\begin{cases}
    \{I(\chi)\}, &  \text{if either $\abs{p-q}=1$, or $p=q$ and $\check \CO$ is very even}; \\
   \{I(\chi_1), I( \chi_2)\}, &  \text{if $p=q$ and $\check \CO$ is not very even}. 
  \end{cases}
  \] 
  Here $\chi_1$ and $\chi_2$ are two genuine characters on $\widetilde L$ of finite order that are not conjugate by $\mathrm N_{\widetilde G}(\widetilde L)$.  

\noindent (b) Suppose that $G=\Spin^*(2n)$. Then for every $\mathbf o\in \widetilde G\backslash \sqrt{-1}\g^* \cap \CO$, there is a unique (up to isomorphism) irreducible Casselman-Wallach representation $\pi_\mathbf o$ of $\widetilde G$ that occurs in $I(\chi)$ whose wave front set equals the closure of $\mathbf o$. Moreover, \[
\Unipgen(\widetilde G)=\{\pi_\mathbf o\mid \mathbf o\in \widetilde G\backslash  \sqrt{-1}\g^* \cap \CO\}
\]
and
\[  I(\chi)\cong \bigoplus_{\mathbf o\in \widetilde G\backslash \sqrt{-1}\g^* \cap \CO} \pi_\mathbf o.
\]

\end{theorem}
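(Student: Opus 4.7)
The plan is to construct every element of $\Unipgen_{\ckcO}(\widetilde G)$ as an irreducible constituent of a degenerate principal series $I(\chi)$, and then match the resulting number of constituents against the count provided by Theorem \ref{thm:Spinpq0}.

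First, I would verify that every irreducible constituent of $I(\chi)$ with associated variety equal to $\overline{\CO}$ belongs to $\Unipgen_{\ckcO}(\widetilde G)$. Because $\chi$ has finite order, its differential vanishes and the infinitesimal character of $I(\chi)$ is $\rho_{\mathfrak u}$, the half-sum of positive roots of the nilradical of $\widetilde P$. By Lemma \ref{lemf}(b) and the compatibility between Lusztig-Spaltenstein induction and Barbasch-Vogan duality, $\rho_{\mathfrak u} = \frac{1}{2}h_{\ckcO}$, the special unipotent infinitesimal character. By a theorem of Borho-Brylinski, the annihilator of $I(\chi)$ has associated variety $\overline{\CO}$, and moreover contains the maximal primitive ideal $I_{\ckcO}$ by the induction formula. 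Since $\chi$ is unitary and genuine, $I(\chi)$ is itself a genuine unitary representation, hence a direct sum of irreducible unitary constituents.

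For part (a), $G=\Spin(p,q)$: Lemma \ref{lemf2} provides the same number of characters $\chi$ up to $\mathrm N_{\widetilde G}(\widetilde L)$-conjugacy as $|\Unipgen_{\ckcO}(\widetilde G)|$ in Theorem \ref{thm:Spinpq0}. In the case $p=q$ with $\check\CO$ not very even, the characters $\chi_1,\chi_2$ have distinct restrictions to the central subgroup $Z$ from the proof of Lemma \ref{lemf2}, so $I(\chi_1)$ and $I(\chi_2)$ have distinct central characters and share no constituents. Irreducibility of each $I(\chi)$ (or $I(\chi_i)$) would then be established by analysing the standard long intertwining operator at the unitary point $\chi$, or via a Jantzen-filtration and $K$-type argument; together with the matching count this yields $\Unipgen_{\ckcO}(\widetilde G)=\{I(\chi)\}$ or $\{I(\chi_1),I(\chi_2)\}$ as claimed.

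For part (b), $G=\Spin^*(2n)$: Lemma \ref{lemf2}(c) gives a unique $\chi$, and Theorem \ref{thm:Spinpq0} counts $|\widetilde G\backslash \sqrt{-1}\g^*\cap \CO|$ constituents. Following the strategy of \cite{BMSZ2}, I would use Vogan's theory of associated cycles together with the Kostant-Sekiguchi correspondence to produce, for each real orbit $\mathbf o$, an irreducible summand $\pi_\mathbf o$ of $I(\chi)$ with wave front set $\overline{\mathbf o}$. The pairwise-distinct wave front sets ensure the $\pi_\mathbf o$ are non-isomorphic, the unitarity of $I(\chi)$ promotes the decomposition to an honest direct sum, and the count from Theorem \ref{thm:Spinpq0} forces no further irreducible components to appear. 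The hardest step is this associated-cycle analysis in part (b), namely identifying the summands of $I(\chi)$ by their real orbit data and adapting the techniques of \cite{BMSZ2} to the genuine setting on the spin cover; a secondary difficulty is the intertwining-operator analysis needed for irreducibility in part (a), mitigated by the very small number of predicted constituents.
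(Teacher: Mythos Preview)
Your overall strategy---show the constituents of $I(\chi)$ lie in $\Unipgen(\widetilde G)$ and then match against the count in Theorem~\ref{thm:Spinpq0}---is exactly the paper's. But you are missing the one lemma that makes both parts short, and in part~(b) you head in a direction the paper deliberately avoids.

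The missing ingredient is the wavefront cycle of $I(\chi)$: by a result of Barbasch on nilpotent orbital integrals, it equals $\sum_{\mathbf o\in \widetilde G\backslash \sqrt{-1}\g^*\cap\CO}\mathbf o$, each real orbit appearing with multiplicity exactly one. Combined with unitarity this makes $I(\chi)$ completely reducible and \emph{multiplicity-free}, with every irreducible summand in $\Unipgen(\widetilde G)$. For part~(a) irreducibility then falls out of the counting alone: when the count is $1$ there is nothing to prove, and when the count is $2$ your central-character argument already shows $I(\chi_1)$ and $I(\chi_2)$ share no constituent, so each contributes exactly one. No intertwining-operator or Jantzen-filtration analysis is needed, and the paper carries out none.

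For part~(b) the paper does \emph{not} analyse the constituents of $I(\chi)$ directly, nor adapt the theta-lifting machinery of \cite{BMSZ2}. Instead, for each $K_\bC$-orbit $\mathbf o'\subset\s_\bC\cap\CO$ it builds a $\theta$-stable parabolic $\fqq$ from an $\fsl_2$-triple through $\mathbf o'$ and forms the cohomologically induced module $\mathcal R^S_\fqq\bigl((\rho_\fuu)^{\otimes(2N+1)}\bigr)$; for $N\geq 1$ this lies in the good range, is irreducible and genuine, and has associated variety $\overline{\mathbf o'}$. A coherent-continuation argument using Goldie-rank polynomials (Barbasch--Vogan, King, Joseph) then shows that some element of $\Unipgen(G)$ also has associated variety $\overline{\mathbf o'}$. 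The count from Theorem~\ref{thm:Spinpq0} gives the bijection $\mathbf o'\leftrightarrow\pi_{\mathbf o'}$, and the wavefront-cycle lemma above (via Schmid--Vilonen) finally yields the direct-sum decomposition of $I(\chi)$. Your proposed route through \cite{BMSZ2} is precisely what the paper sidesteps; there is no evident dual pair producing genuine representations of $\Spin^*(2n)$, so that approach would be at best substantially harder to carry out.
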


Now we return to consider the group $G$. Assume that $G=\Spin(p,q)$. \trivial[h]{
Since $\abs{p-q}\leq 1$ and $m=p+q\geq 2$, we know that $pq\neq 0$. Thus $\widetilde G$ contains $G$ as a subgroup of index two.
Pick an arbitrary element $g_0\in \widetilde G \setminus G$. For every Casselman-Wallach representation $\pi$ of $G$, write $\pi^{g_0}$ for its twist by $g_0$ (via conjugation). In other words, $\pi^{g_0}$ is the Casselman-Wallach representation of $G$ whose underlying space is the same as that of $\pi$ (to be denoted by $V_\pi$), and whose action is given by the composition of 
\[
 G\xrightarrow{g\mapsto g_0 g g_0^{-1}} G\xrightarrow{\textrm{the representation $\pi$}}\mathrm{GL}(V_\pi). 
\]
}
Put $P:=\widetilde P\cap G$ and $L=\widetilde L\cap G$. Then we have the  identification 
\[
I(\chi)|_G=\Ind_P^{G} \chi|_L
\]
of representations of $G$.
By Clifford theory and Theorem \ref{thm:Spinpq03}, if $p=q$ is odd, then $I(\chi)|_G$ is irreducible. In all other cases, $I(\chi)|_G$ is the direct sum of two irreducible subrepresentations that are not isomorphic to each other.  Write  $I_1(\chi)$ and $I_2(\chi)$ for these two irreducible subrepresentations.

By Clifford theory, part (a) of Theorem \ref{unip0} easily implies the following result. We omit the details.   

\begin{theorem}\label{spinc}
    Assume that $G=\Spin(p,q)$. Then 
    \begin{eqnarray*}
&&\Unipgen(G)\\
&=&
\begin{cases}
    \{I_1(\chi), I_2(\chi) \}, &  \text{if $\abs{p-q}=1$}; \\
     \{I_1(\chi), I_2(\chi) \}, &  \text{if $p=q$ is even},\\
    &\quad  \text{and $\check \CO$ is  very even};  \\
      \{I_1(\chi_1), I_2(\chi_1), I_1(\chi_2), I_2(\chi_2)\}, &  \text{if $p=q$ is even},\\
    &\quad  \text{and $\check \CO$ is not very even}; \\
      \{I(\chi)|_G\}, &  \text{if $p=q$ is odd}.
  \end{cases}
\end{eqnarray*}
Here $\chi_1$ and $\chi_2$ are two genuine characters on $\widetilde L$ of finite order that are not conjugate by  $\mathrm N_{\widetilde G}(\widetilde L)$. 
\end{theorem}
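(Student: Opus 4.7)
The plan is to derive Theorem \ref{spinc} directly from Theorem \ref{unip0}(a) and Theorem \ref{thm:Spinpq03} via standard Clifford theory applied to the index-two inclusion $G \subset \widetilde G$. First I would note that in every case listed in the statement one has $pq \neq 0$, so $\widetilde G / G \cong \{\pm 1\}$, with non-trivial coset represented by any element on which $\widetilde{\mathrm{sgn}}$ is non-trivial; since $\widetilde{\mathrm{sgn}}$ is trivial on the identity component, twisting by it preserves the annihilator ideal, and therefore preserves the set $\Unipgen(\widetilde G)$. The Clifford-theoretic facts I would invoke for irreducible Casselman--Wallach representations $\pi, \pi'$ of $\widetilde G$ are: (i) $\pi|_G$ is irreducible if and only if $\pi \otimes \widetilde{\mathrm{sgn}} \not\cong \pi$, in which case $(\pi \otimes \widetilde{\mathrm{sgn}})|_G \cong \pi|_G$; otherwise $\pi|_G$ is the direct sum of two non-isomorphic irreducibles, which for $\pi = I(\chi)$ are the $I_1(\chi), I_2(\chi)$ of the statement; (ii) $\pi|_G$ and $\pi'|_G$ share an irreducible constituent only if $\pi' \cong \pi$ or $\pi' \cong \pi \otimes \widetilde{\mathrm{sgn}}$; and (iii) every genuine irreducible Casselman--Wallach representation of $G$ arises as a constituent of $\pi|_G$ for some irreducible genuine $\pi$ of $\widetilde G$. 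Together these show that $\Unipgen(G)$ is exactly the set of irreducible constituents of $\pi|_G$ as $\pi$ ranges over $\Unipgen(\widetilde G)$, so each case reduces to a count.

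In the cases $\abs{p-q} = 1$ and $p = q$ even with $\check \CO$ very even, Theorem \ref{unip0}(a) gives $\Unipgen(\widetilde G) = \{I(\chi)\}$, and Theorem \ref{thm:Spinpq03} (whose single exception is the $p = q$ odd case) yields $I(\chi) \otimes \widetilde{\mathrm{sgn}} \cong I(\chi)$; thus $I(\chi)|_G = I_1(\chi) \oplus I_2(\chi)$, producing the two required elements of $\Unipgen(G)$. In the case $p = q$ even with $\check \CO$ not very even, $\Unipgen(\widetilde G) = \{I(\chi_1), I(\chi_2)\}$ with both members stable under $\otimes \widetilde{\mathrm{sgn}}$; by (ii), since $I(\chi_1) \not\cong I(\chi_2)$ and neither is related to the other by $\otimes \widetilde{\mathrm{sgn}}$, the restrictions $I(\chi_1)|_G$ and $I(\chi_2)|_G$ share no irreducible constituent, and each splits into two non-isomorphic pieces, giving the four distinct elements $I_1(\chi_j), I_2(\chi_j)$ for $j \in \{1, 2\}$.

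The case $p = q$ odd requires one small preparatory parity observation: if all rows of $\check \CO$ had even length and each row length appeared with even multiplicity, then $m$ would be a sum of terms of the form $(2a)(2b)$, hence divisible by $4$, contradicting $m = 2p$ with $p$ odd. Therefore, when $\Unipgen(\widetilde G)$ is non-empty and $p = q$ is odd, $\check \CO$ is automatically not very even and $\Unipgen(\widetilde G) = \{I(\chi_1), I(\chi_2)\}$. Theorem \ref{thm:Spinpq03} now ensures that neither $I(\chi_j)$ is fixed by $\otimes \widetilde{\mathrm{sgn}}$, so both restrictions are irreducible; and since $\otimes \widetilde{\mathrm{sgn}}$ must permute the two-element set $\Unipgen(\widetilde G)$ without fixing either element, it interchanges $I(\chi_1)$ and $I(\chi_2)$, so by (i) their restrictions coincide, producing the single element $I(\chi)|_G$ of $\Unipgen(G)$. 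The only step in the entire argument that is not purely Clifford-theoretic bookkeeping is this parity observation, and I would expect it to be the mildly subtle point of the proof.
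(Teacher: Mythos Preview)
Your proposal is correct and takes essentially the same approach as the paper, which simply states ``By Clifford theory, part (a) of Theorem \ref{unip0} easily implies the following result. We omit the details.'' You have supplied precisely those omitted details, including the parity observation (that $p=q$ odd together with all row lengths of $\check{\mathcal O}$ having even multiplicity forces $\check{\mathcal O}$ not to be very even) needed to see that the $p=q$ odd case falls under the two-element branch of Theorem \ref{unip0}(a).
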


\trivial[h]{
By Clifford theory, part (a) of Theorem \ref{unip0} easily implies the following result. We omit the details.   

\begin{theorem}\label{spinc}
    Assume that $G=\Spin(p,q)$. Then the representation $I(\chi|_L)$ is irreducible and belongs to $\Unipgen(G)$. Moreover, 
    \begin{eqnarray*}
&&\Unipgen(G)\\
&=&
\begin{cases}
    \{I(\chi|_L), I(\chi|_L)^{g_0} \}, &  \text{if $\abs{p-q}=1$}; \\
     \{I(\chi|_L), I(\chi|_L)^{g_0} \}, &  \text{if $p=q$}, \textit{ and $\check \CO$ is very even}; \\
      \{I({\chi_1}|_L), I({\chi_1}|_L)^{g_0},I({\chi_2}|_L), I({\chi_2}|_L)^{g_0} \}, &  \text{if $p=q$ is even},\\
    &\quad  \text{and $\check \CO$ is not very even}; \\
      \{I({\chi}|_L)\}, &  \text{if $p=q$ is odd},\\
      &\quad \text{and $\check \CO$ is not very even}.
  \end{cases}
\end{eqnarray*}
Here $\chi_1$ and $\chi_2$ are two genuine characters on $\widetilde L$ of finite order that are not conjugate by  $\mathrm N_{\widetilde G}(\widetilde L)$. 
\end{theorem}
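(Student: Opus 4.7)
The plan is to apply Clifford theory to the index-$2$ normal subgroup inclusion $G \hookrightarrow \widetilde G$, which is genuinely of index $2$ here because $|p-q|\leq 1$ and $m=p+q\geq 2$ force $pq\neq 0$. The quotient character is $\widetilde{\mathrm{sgn}}$. Theorem \ref{unip0}(a) supplies the complete list of members of $\Unipgen(\widetilde G)$, and Theorem \ref{thm:Spinpq03} tells us exactly when a given $\pi\in\Unipgen(\widetilde G)$ is isomorphic to its twist $\pi\otimes \widetilde{\mathrm{sgn}}$. Together these two ingredients, combined with the standard Clifford-theoretic dichotomy, will determine the decomposition of $\pi|_G$ for each $\pi \in \Unipgen(\widetilde G)$, and hence all of $\Unipgen(G)$.

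The Clifford-theoretic input is the following well-known fact: for an irreducible $\pi$ of $\widetilde G$, if $\pi\cong \pi\otimes \widetilde{\mathrm{sgn}}$ then $\pi|_G$ is a direct sum of two non-isomorphic irreducibles of $G$, while if $\pi\not\cong \pi\otimes \widetilde{\mathrm{sgn}}$ then $\pi|_G$ is irreducible and $\pi,\,\pi\otimes \widetilde{\mathrm{sgn}}$ restrict to the same $G$-module. Conversely, every $\sigma\in \Unipgen(G)$ appears as a $G$-subrepresentation of $\pi|_G$ for some $\pi\in\Unipgen(\widetilde G)$: induce $\sigma$ up to $\widetilde G$, and note that every irreducible summand of $\Ind_G^{\widetilde G}\sigma$ is genuine and is annihilated by the same primitive ideal of $U(\fgg_\bC)$ as $\sigma$, hence lies in $\Unipgen(\widetilde G)$; Frobenius reciprocity then places $\sigma$ inside the restriction of that summand.

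With these tools, one proceeds through the four cases in the statement. When $|p-q|=1$, or when $p=q$ is even and $\check\CO$ is very even, Theorem \ref{thm:Spinpq03} gives $I(\chi)\cong I(\chi)\otimes \widetilde{\mathrm{sgn}}$, so $I(\chi)|_G=I_1(\chi)\oplus I_2(\chi)$ produces two non-isomorphic constituents which exhaust $\Unipgen(G)$. When $p=q$ is even and $\check\CO$ is not very even, both $I(\chi_1)$ and $I(\chi_2)$ are self-twist invariant, so each restricts to a sum of two constituents; any isomorphism $I_i(\chi_1)\cong I_j(\chi_2)$ of $G$-modules would force $I(\chi_2)\in\{I(\chi_1),\,I(\chi_1)\otimes \widetilde{\mathrm{sgn}}\}=\{I(\chi_1)\}$, contradicting $I(\chi_1)\not\cong I(\chi_2)$, and so the four constituents are pairwise distinct. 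When $p=q$ is odd, Theorem \ref{thm:Spinpq03} gives $I(\chi)\not\cong I(\chi)\otimes \widetilde{\mathrm{sgn}}$; if $\check\CO$ is very even then $\Unipgen(\widetilde G)=\{I(\chi)\}$ restricts to a single irreducible, while if $\check\CO$ is not very even then the two elements of $\Unipgen(\widetilde G)=\{I(\chi_1),I(\chi_2)\}$ must form a single $\widetilde{\mathrm{sgn}}$-orbit, so $I(\chi_2)\cong I(\chi_1)\otimes \widetilde{\mathrm{sgn}}$ and their restrictions coincide as the single irreducible $I(\chi)|_G$.

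The only real subtlety, and the closest thing to an obstacle, arises in this last case: one must observe that the two members of $\Unipgen(\widetilde G)$ in the $p=q$ odd, not-very-even subcase are necessarily related by the twist with $\widetilde{\mathrm{sgn}}$, which is what collapses the two restrictions to the single element $I(\chi)|_G$ of $\Unipgen(G)$. The resulting cardinalities match Theorem \ref{thm:Spinpq4} in all four cases, providing an independent consistency check on the argument.
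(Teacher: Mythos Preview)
Your overall strategy---Clifford theory for the index-$2$ inclusion $G\subset\widetilde G$, combined with Theorem~\ref{unip0}(a) and Theorem~\ref{thm:Spinpq03}---is exactly what the paper invokes (it writes only ``By Clifford theory, part~(a) of Theorem~\ref{unip0} easily implies the following result. We omit the details''), and your case analysis correctly determines the cardinality of $\Unipgen(G)$ and exhibits its elements as the irreducible constituents of $I(\chi)|_G$ (and of $I(\chi_2)|_G$ in the relevant case).

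There is, however, a gap relative to the precise assertions of the statement as given. The theorem claims that the induced representation $I(\chi|_L)=\Ind_P^G(\chi|_L)$ on $G$ is itself \emph{irreducible}, and that the members of $\Unipgen(G)$ are $I(\chi|_L)$ and its conjugate $I(\chi|_L)^{g_0}$. Your argument never addresses either point: you name the two constituents of $I(\chi)|_G$ abstractly as $I_1(\chi),I_2(\chi)$, without showing that one of them coincides with $\Ind_P^G(\chi|_L)$ or that conjugation by $g_0$ exchanges them. In fact, with $P=\widetilde P\cap G$ the paper records the identification $I(\chi)|_G=\Ind_P^G(\chi|_L)$, so under this interpretation $I(\chi|_L)$ is \emph{reducible} precisely in the cases where your Clifford dichotomy yields two summands; the irreducibility assertion in the statement therefore cannot hold with that choice of parabolic. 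The paper's final version of the theorem accordingly drops this claim and lists $\Unipgen(G)$ in terms of the abstract summands $I_1(\chi),I_2(\chi)$---which is exactly what your argument establishes.
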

}
The following unitarity result is an obvious consequence of  Theorems \ref{unip0} and \ref{spinc}.

\begin{theorem}\label{genuine}
All genuine special unipotent representations of $\widetilde G$ or $G$ are unitarizable. 
\end{theorem}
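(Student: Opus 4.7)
The plan is to read off the unitarity directly from the explicit description of all genuine special unipotent representations provided by Theorems \ref{unip0} and \ref{spinc}. The point is that every such representation is realized, up to passing to a direct summand or a restriction, as a normalized parabolic induction of a unitary character, and this construction visibly preserves unitarity.

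First, I would note that in every case of Theorem \ref{unip0}, the inducing datum is a genuine character $\chi$ of $\widetilde L$ of finite order. A character of finite order is automatically unitary (its image lies in the group of roots of unity in $\bC^\times$). Viewing $\chi$ as a character of the parabolic subgroup $\widetilde P$ in the standard way, I would invoke the well-known fact that normalized smooth parabolic induction from a unitary (admissible) representation of a Levi factor produces a unitary Casselman--Wallach representation of $\widetilde G$; equivalently, the Hilbert space completion of $\Ind_{\widetilde P}^{\widetilde G}\chi$ carries a $\widetilde G$-invariant inner product. Hence $I(\chi)$ is unitarizable.

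Now I would handle each case listed in Theorems \ref{unip0} and \ref{spinc}. For $G=\Spin(p,q)$, Theorem \ref{unip0}(a) says that every member of $\Unipgen(\widetilde G)$ is of the form $I(\chi)$ (which is irreducible there), so unitarity is immediate. For $G=\Spin^*(2n)$, Theorem \ref{unip0}(b) exhibits every member of $\Unipgen(\widetilde G)$ as an irreducible direct summand $\pi_\mathbf o$ of $I(\chi)$; a closed invariant subspace of a unitarizable Casselman--Wallach representation is unitarizable (one simply restricts the invariant inner product), so each $\pi_\mathbf o$ is unitarizable. For the group $G$ itself, Theorem \ref{spinc} describes $\Unipgen(G)$ either as $I(\chi)|_G$ or as one of the two constituents $I_1(\chi), I_2(\chi)$ of $I(\chi)|_G$; since restriction of a unitary representation to a closed subgroup is unitary, and again closed invariant subspaces inherit the inner product, all elements of $\Unipgen(G)$ are unitarizable.

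There is no real obstacle here: the hard work has already been done in establishing Theorems \ref{unip0} and \ref{spinc}, which identify each genuine special unipotent representation with an explicit subquotient (in fact subrepresentation) of a unitarily induced representation from a finite-order character. The only things to verify are the two elementary preservation principles (unitarity is preserved under normalized parabolic induction from a unitary character, and under passage to closed invariant subspaces or to restrictions to closed subgroups), both of which are standard.
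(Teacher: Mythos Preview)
Your proposal is correct and follows exactly the approach of the paper, which simply declares the theorem to be an obvious consequence of Theorems~\ref{unip0} and~\ref{spinc}. You have merely spelled out the implicit reasoning: each genuine special unipotent representation is realized as a subrepresentation of a normalized parabolic induction of a finite-order (hence unitary) character, and unitarity is inherited by direct summands and restrictions.
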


\begin{remark}
Theorem \ref{genuine}, together with the result of \cite{BMSZ2} on special unipotent representations of $G_0$, implies that all special unipotent representations of $\widetilde G$ or $G$ are unitarizable.
\end{remark}

We now examine the case of the complex spin group. Recall that  $G_\C=\Spin (m,\C)$.
View $G_\C$ as a real reductive group. Then its  Langlands dual is $\ckG\times \ckG$, where $\ckG$ is given in \eqref{eq:Langlands}. 
As before, denote by  $\Unip^{\mathrm{gen}}_{\ckcO_\C}(G_\C)$ the set of isomorphism classes of genuine special unipotent representations of $G_\C$ attached to $\check \CO_\C$, where $\check \CO_\C$ is a nilpotent $\check G\times \check G$-orbit in $\check \g \times \check \g$ (in the obvious notation). 

The following theorem should be known within the expert community. As the proof is along the same line as that of Theorem \ref{thm:Spinpq0}, we will state the result without proof.  

\begin{theorem} The set $\Unip^{\mathrm{gen}}_{\ckcO_\C}(G_\C)$ is empty, namely there exists no genuine special unipotent representation of $G_\C=\Spin (m,\C)$. 
\end{theorem}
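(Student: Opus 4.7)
The plan is to adapt the strategy of Theorem \ref{thm:Spinpq0} to the complex setting. For $G_\C = \Spin(m,\C)$ viewed as a real reductive group, the Langlands dual is $\check G \times \check G$, and a nilpotent orbit underlying a special unipotent packet has the form $\ckcO_\C = (\check \CO, \check \CO)$ for some nilpotent orbit $\check \CO \subset \check \g$; the associated infinitesimal character is $(\lambda_{\check \CO}, \lambda_{\check \CO})$, where $\lambda_{\check \CO} = \tfrac{1}{2} h$ and $h$ is the neutral element of an $\mathfrak{sl}_2$-triple containing a generic element of $\check \CO$.

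The main step is to show that any representation of $G_\C$ attached to $\ckcO_\C$ has trivial central character on the kernel $\{\pm 1\} \subset Z(G_\C)$ of the covering homomorphism $\Spin(m,\C) \to \SO(m,\C)$; this immediately forces $\Unip^{\mathrm{gen}}_{\ckcO_\C}(G_\C)$ to be empty. Concretely, the central character at a finite-order central element $z$ is computed, via the Langlands parameters of the degenerate principal series whose subquotients realize the special unipotent packet, as the evaluation of the weight $h = 2 \lambda_{\check \CO}$ at $z$. Thus one reduces to showing $h \in Q(G)$, the root lattice of $G$, so that $z^h = 1$ for all $z \in Z(G_\C)$ (since roots vanish on the center).

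By $\mathfrak{sl}_2$-theory $h$ has integer values on all $\check G$-weights, and so lies in the coweight lattice of $\check G$, which under Langlands duality equals $P(G)$. A direct combinatorial check then places $h$ in $Q(G)$: for $G = \Spin(2n+1,\C)$ we have $Q(B_n) = \Z^n$ and $h$ has integer coordinates in the standard basis; for $G = \Spin(2n,\C)$, $Q(D_n) = \{a \in \Z^n : \sum a_i \in 2\Z\}$, and the defining constraint that partitions for nilpotent orbits in $\mathfrak{so}(2n)$ have even parts with even multiplicity guarantees $\sum h_i \in 2\Z$.

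The main obstacle is the first reduction: identifying the precise form of the inducing character of the degenerate principal series underlying the special unipotent packet and verifying that its restriction to $Z(G_\C)$ equals $z \mapsto z^h$ (an analog of Lemma \ref{lemf2} for the complex case, using an induction datum from a Levi realizing $\check \CO$ as Richardson-induced from the zero orbit, as in Lemma \ref{lemf}). Once this reduction is in hand, the root-lattice check for $h$ completes the argument uniformly in $m$ and $\check \CO$.
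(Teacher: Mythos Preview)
Your approach is genuinely different from the paper's: the paper does not give a proof but indicates that it proceeds, like Theorem~\ref{thm:Spinpq0}, by computing multiplicities in the coherent continuation representation. You instead try to show directly that the central character of any special unipotent representation is trivial on $\ker(\Spin\to\SO)$.

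However, there is a real gap. The central character of an irreducible representation of a complex group with Langlands parameter $(\lambda,\mu)$ is $z\mapsto z^{\lambda-\mu}$, and for a representation with infinitesimal character $(\lambda_{\ckcO},\lambda_{\ckcO})$ one only knows that $\lambda,\mu\in W\!\cdot\lambda_{\ckcO}$. The difference $\lambda-\mu$ is therefore not $h=2\lambda_{\ckcO}$ in general, and your identification of the central character with $z^{h}$ is unjustified. Worse, the infinitesimal character by itself does \emph{not} force non-genuineness: for $G_\bC=\Spin(5,\bC)$ and $\ckcO=[2,1,1]\subset\mathfrak{sp}_4$ one has $\lambda_{\ckcO}=(\tfrac12,0)$, and the principal series with parameter $\bigl((\tfrac12,0),(0,\tfrac12)\bigr)$ is a genuine representation with this infinitesimal character. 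It fails to be special unipotent only because its Gelfand--Kirillov dimension is too large, i.e.\ because of the annihilator condition, not because of its central character. So the reduction you call ``the main obstacle'' cannot be carried out in the form you state: one cannot avoid using the annihilator/associated-variety constraint, and once that is in play the root-lattice computation for $h$ no longer suffices. (The restriction to diagonal $\ckcO_\bC=(\ckcO,\ckcO)$ also needs a separate argument, though that step is comparatively routine.) Your verification that $h\in Q(G)$ is correct, but it does not by itself yield the conclusion.
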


We remark that Losev, Mason-Brown and Matvieievskyi \cite{LMBM} have recently proposed a notion of unipotent representations for a complex reductive group, extending the notion of special unipotent representations. See also \cite[Section 2.3]{B17} for a different version of the notion proposed by Barbasch earlier. Genuine unipotent representations of $\Spin(m,\bC)$ do exist and they play a key role in the determination of the unitary dual of $\Spin(m,\bC)$ (see for example, \cite{Brega} and \cite{WZ}).

\section{Proof of Theorem \ref{thm:Spinpq0}}
\label{secpf1}
In this section, we apply the method developed in \cite{BMSZ1} to prove Theorem~\ref{thm:Spinpq0}. We will also follow \cite{BMSZ1} in our choice of notations and conventions.  

\subsection{Coherent continuation representation of a spin group}
We adopt the formulation of coherent continuation representation in \cite[Sections 3 and 4] {BMSZ1}. The original references include \cite{Sch, Zu, SpVo, Vg}. 

We make the following identifications: 
\begin{itemize}
    \item  The dual $\fhh^*$ of the abstract Cartan subalgebra $\fhh$ of $\g_\bC$ is identified with $\bC^n$. 
\item 
The analytic weight lattice $Q\subset \fhh^*$ of $G_{\bC}$ is identified with $\bZ^{n}\sqcup (\half \bfone +\bZ^n)$. 
Here $\bfone := (1, 1, \cdots, 1)\in \bZ^n$. 
\item The analytic weight lattice $Q_0$ of $\SO(m,\bC)$ 
is identified with the sublattice $\bZ^n$ of $Q$ via the natural quotient map $G_\bC \rightarrow \SO(m,\bC)$. 
\end{itemize}

We adopt the following notations:
\begin{itemize}
\item 
Let $\cKG$ be the 
Grothendieck group (with coefficients in $\bC$) of the category of Casselman-Wallach representations of $\wtG$. 
\item Let $\cKlamG$ be the subgroup of $\cKG$ generated by irreducible representations of $\wtG$ with infinitesimal character
$\lambda\in \fhh^{*}$,
\item 
Let $\cKgenG$ be the subgroup of $\cKG$ generated by irreducible genuine representations of $\wtG$. 
\item  Let $\cKclsG$ be the subgroup of $\cKG$ generated by irreducible representations of $\wtG$ which factor through $G_0$.
We identify $\cKclsG$ with the Grothendick group $\cK(G_0)$ of the category of Casselman-Wallach representations of $G_0$. 
\end{itemize}

Let $W$ be abstract Weyl group of $G_\bC$, which acts on $\fhh^*$ in the standard way. 

For the rest of this section, let $\Lambda \subset \fhh^{*}$ be a $Q$-coset.  
Denote by $W_{\Lambda}$
the stabilizer of $\Lambda$ in $W$. Also we have the integral Weyl group 
\begin{equation}\label{intwgp}
W(\Lambda) := \set{w\in W | w\lambda -\lambda\textrm{ is in the root lattice for every } \lambda \in \Lambda}, 
\end{equation}
which is a subgroup of $W_\Lambda$. 

\begin{definition} \label{cohwtG}
The space $\Coh_\Lambda (\cKG)$ of $\cKG$-valued coherent families based on $\Lambda$ is the vector space of all   
maps $\Psi\colon \Lambda \to \cKG$ such that, for all $\lambda \in \Lambda$,  
\begin{itemize}
    \item $\Psi(\lambda)\in \cKlamG$, and  
    \item for any holomorphic finite-dimensional representation $F$ of $G_\bC$,
    \[
    F\otimes \Psi(\lambda)  = \sum_{\mu} \Psi(\lambda+\mu), 
    \]
    where $\mu$ runs over the set of all weights (counting multiplicities) of $F$.  
\end{itemize}
\end{definition}

The space $\Coh_\Lambda(\cKG)$ is a $W_\Lambda$-module under the action
\[(w\cdot \Psi)(\lambda) = \Psi(w^{-1}\lambda), \qquad w\in W_{\Lambda}.\] 
This is called the coherent continuation representation.

Fix a decomposition 
\[
\Lambda = \Lambda_{1} \sqcup \Lambda_{2}
\]
where $\Lambda_{1}$ and $\Lambda_{2}$ are $Q_0$-cosets. Then $\Lambda_1 = \halfone+\Lambda_{2}$. 
Similar to  \eqref{intwgp}, we  define the integral Weyl groups $W(\Lambda_1)$ and $ W(\Lambda_2)$. Then  
\[W(\Lambda) = W(\Lambda_1) = W(\Lambda_2).\]
\trivial[h]{
Note that $w\cdot \halfone - \halfone\in \bZ$.  
$\inn{w\cdot \lambda -\lambda}{\ckalpha}\in \bZ \longleftrightarrow
\inn{w\cdot (\lambda +\halfone) -(\lambda+\halfone)}{\ckalpha}\in \bZ$.
Warning: $W_\Lambda \neq W_{\Lambda_1}$ in general. For example, when $\frac{1}{4}\bfone \in \Lambda_1$. 
} 

Let $\Coh_{\Lambda_1}(\cKclsG) := \Coh_{\Lambda_1}(\cK(G_0))$ be the coherent continuation representation defined for $G_0$ based on $\Lambda_1$
(replace  $\cKG$ by $\cK(G_0)$, $\Lambda$ by $\Lambda_1$ and $G_\bC$ by $\SO(m,\bC)$ in Definition~\ref{cohwtG}). 
Similarly, let $\Coh_{\Lambda_2}(\cKgenG)$ be the coherent 
continuation representation defined for $\wtG$ based on $\Lambda_2$  
(replace $\cKG$ by $\cKgenG$, $\Lambda$ by $\Lambda_2$ and $G_\bC$ by $\SO(m,\bC)$ in Definition~\ref{cohwtG}).

Define a subspace of $\Coh_{\Lambda}(\wtG)$ by  
\[
  \Coh_{\Lambda_{1},\Lambda_{2}}(\wtG):= \Set{ \Psi
    \in \Coh_{\Lambda}(\wtG) | 
    \begin{array}{l}
      \Im(\Psi|_{\Lambda_{1}})\subset \cKclsG,\\
      \Im(\Psi|_{\Lambda_{2}})\subset \cKgenG
    \end{array}
  }.
\]
\delete{
\[
  \Coh_{\Lambda_{1},\Lambda_{2}}(\wtG):= \Set{ \Psi
    \in \Coh_{\Lambda}(\wtG) | 
    \begin{array}{l}
      \Im(\Psi|_{\Lambda_{1}})\subset \cKclsG,\\
      \Im(\Psi|_{\Lambda_{2}})\subset \cKgenG
    \end{array}. 
  }
\]
}

The following lemma reduces the counting of genuine representations of $\wtG$ to that of $G_0$-representations. 
\begin{lemma}\label{lem:iso}
  Restrictions induce the following isomorphisms of $W(\Lambda)$-modules:
  \[
    \Coh_{\Lambda_{1}}(\cK(G_0)) \xleftarrow{ \ \ \ \cong \ \ } \Coh_{\Lambda_{1},\Lambda_{2}}(\cKG) \xrightarrow{ \ \ \cong \ \ \ } \Coh_{\Lambda_{2}}(\cKgenG).
  \]
  Moreover, 
  \begin{equation}\label{eq:coheq}
  \Coh_{\Lambda}(\cKG) =  \Coh_{\Lambda_1,\Lambda_2}(\cKG) \oplus \Coh_{\Lambda_2,\Lambda_1}(\cKG). 
  \end{equation}
\end{lemma}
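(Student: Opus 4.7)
The plan is to deduce all claimed isomorphisms and the direct sum decomposition from the standard evaluation isomorphism at a regular point, combined with a parity observation on central characters. Fix a regular element $\lambda_0\in\Lambda_1$. By the classical theory of coherent families (\cite{Sch, Zu, SpVo, Vg}), the evaluation map
\[
\ev{\lambda_0}\colon \Coh_\Lambda(\cKG) \xrightarrow{\cong} \cK_{\lambda_0}(\wtG), \qquad \Psi\mapsto\Psi(\lambda_0),
\]
is a $W(\Lambda)$-equivariant isomorphism, and an analogous statement holds for coherent families of $G_0$-representations. The parity observation is that the central element $-1\in\wtG$ acts trivially on any irreducible holomorphic representation of $G_\bC$ whose weights lie in $Q_0$, and by $-1$ on one whose weights lie in $\halfone+Q_0$. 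Hence tensoring an irreducible classical representation of $\wtG$ with a classical finite-dimensional $F$ (weights in $Q_0$) again yields a classical representation, while tensoring with a spin $F$ (weights in $\halfone+Q_0$) yields a genuine one; correspondingly, the weights of $F$ shift the infinitesimal character by elements of $Q_0$ or $\halfone+Q_0$, so the values of a coherent family on $\Lambda_1$ and on $\Lambda_2$ are forced to have opposite classical/genuine types.

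It follows that for an irreducible classical $\pi$ with infinitesimal character $\lambda_0\in\Lambda_1$ the coherent family $\Psi_\pi$ lies in $\Coh_{\Lambda_1,\Lambda_2}(\cKG)$, while for a genuine $\pi$ one has $\Psi_\pi\in\Coh_{\Lambda_2,\Lambda_1}(\cKG)$. Under $\ev{\lambda_0}$, this matches the block decomposition
\[
\cK_{\lambda_0}(\wtG) \;=\; \cK_{\lambda_0}(G_0) \,\oplus\, \cK^{\mathrm{gen}}_{\lambda_0}(\wtG),
\]
which immediately yields the direct sum decomposition \eqref{eq:coheq} and the identification $\ev{\lambda_0}\bigl(\Coh_{\Lambda_1,\Lambda_2}(\cKG)\bigr)=\cK_{\lambda_0}(G_0)$. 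The same evaluation at $\lambda_0\in\Lambda_1$ produces $\Coh_{\Lambda_1}(\cK(G_0))\xrightarrow{\cong}\cK_{\lambda_0}(G_0)$, and since both evaluations factor through the restriction $\Psi\mapsto\Psi|_{\Lambda_1}$, the first claimed isomorphism drops out. The second is obtained by running the same argument at a regular $\lambda_0'\in\Lambda_2$.

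The $W(\Lambda)$-equivariance of the restriction maps is essentially formal: since $W(\Lambda)$ translates by the root lattice $\subseteq Q_0$, it stabilizes each of $\Lambda_1,\Lambda_2$, and the restriction maps intertwine the actions $(w\Psi)(\lambda)=\Psi(w^{-1}\lambda)$ tautologically. I expect the main point requiring care to be a clean formulation of the parity property—keeping track of which irreducible constituents of $F\otimes\pi$ contribute to $\cKclsG$ versus $\cKgenG$ as $F$ ranges over all finite-dimensional representations of $G_\bC$—but this is a straightforward consequence of the fact that every irreducible holomorphic representation of $G_\bC=\Spin(m,\bC)$ has a well-defined central character.
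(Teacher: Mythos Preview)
Your argument is correct and follows essentially the same route as the paper: both hinge on the evaluation isomorphism at a regular parameter together with the parity observation that tensoring by a genuine (spin) finite-dimensional representation swaps the classical and genuine blocks, which is exactly what the paper records as the chain of equivalent conditions on $\Psi$. Your write-up is somewhat more explicit about the central-character mechanism and about why the restriction lands in $\Coh_{\Lambda_1}(\cK(G_0))$, but there is no substantive difference in method.
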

\begin{proof}
  The $W(\Lambda)$-equivariance of the maps is clear. 
  Let $\Psi$ be in $ \Coh_\Lambda(\cKG)$.
  Since $\Psi|_{\Lambda_1}$ and $\Psi|_{\Lambda_2}$ are related by tensoring with genuine finite dimensional representations,  
 we see that the following conditions are equivalent: 
 \begin{itemize}
     \item  $\Psi\in \Coh_{\Lambda_1,\Lambda_2}(\cKG)$,
    \item    $\Psi(\lambda_1)\in \cKclsG$ for some regular element $\lambda_1\in \Lambda_1$,
    \item    $\Psi(\lambda_1)\in \cKclsG$ for all elements $\lambda_1\in \Lambda_1$,
    \item    $\Psi(\lambda_2)\in \cKgenG$ for some regular element $\lambda_2\in \Lambda_2$,
    \item    $\Psi(\lambda_2)\in \cKgenG$ for all elements $\lambda_2\in \Lambda_2$,
 \end{itemize}
  The maps are isomorphisms since the evaluation $\Psi \mapsto \Psi(\lambda)$ induces an isomorphism for any regular element $\lambda$ in $\Lambda$. See \cite[Section~7]{Vg}. 
\end{proof}

\def\LCO{{}^L\mathscr C_{\ckcO}}
The following counting result on genuine special unipotent representations will follow from Lemma~\ref{lem:iso}. Recall that attached to $\ckcO$, there is an infinitesimal character determined by $\half (^{L}h)$ in the notation of  \cite[Section 5]{BVUni}, which is represented by $\lambda_{\check \CO}\in \fhh^*$. 

\begin{proposition}\label{counting}
   Suppose $\check \CO$ is a nilpotent $\check G$-orbit in $\check \g$. 
   Then 
   \[
    \abs{\Unipgen(\wtG)}= \sum_{\sigma \in \LCO}[\sigma : \Coh_{\Lambda_1}(\cK(G_0))],
   \]
   where 
  \begin{itemize}
        \item $\Lambda_1 = \lambda_\ckcO + \halfone + Q_0$, and  
      \item $\LCO \subset \Irr(W(\Lambda_1))$ is the Lusztig left cell attached to $\lambda_\ckcO$ (see \cite[Definition~3.34]{BMSZ1}).  
  \end{itemize} 
\end{proposition}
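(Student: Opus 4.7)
The plan is to reduce the counting of genuine special unipotent representations of $\widetilde G$ to a counting inside a coherent continuation representation of $G_0$, and then invoke the Lusztig left-cell description of the multiplicity. This follows the same strategy as \cite{BMSZ1}, adapted to the genuine case via Lemma~\ref{lem:iso}.

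First, I would recall the general characterization (as in \cite[Section 4]{BMSZ1}, ultimately going back to work of Barbasch--Vogan and Vogan) that the number of irreducible Casselman--Wallach representations of a given reductive group with infinitesimal character $\lambda_\ckcO$ and annihilator ideal $I_\ckcO$ equals the multiplicity of the Lusztig left cell $\LCO$ inside the appropriate coherent continuation representation. Applied to genuine representations of $\widetilde G$ with infinitesimal character $\lambda_\ckcO$ (which lies in $\Lambda_2 = \lambda_\ckcO + Q_0$, since $\Lambda_1 = \halfone + \Lambda_2$), this yields
\[
\abs{\Unipgen(\wtG)} = \sum_{\sigma \in \LCO} [\sigma : \Coh_{\Lambda_2}(\cKgenG)].
\]
The genuine variant of the counting formula works identically to the linear case once the coherent continuation representation has been defined on $\cKgenG$, which has already been done above.

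Second, I would apply Lemma~\ref{lem:iso}, which provides the $W(\Lambda)$-equivariant isomorphism
\[
\Coh_{\Lambda_2}(\cKgenG) \;\cong\; \Coh_{\Lambda_1,\Lambda_2}(\cKG) \;\cong\; \Coh_{\Lambda_1}(\cK(G_0)).
\]
Since both spaces are isomorphic as $W(\Lambda)$-modules and $\LCO$ consists of irreducible $W(\Lambda) = W(\Lambda_1)$-representations, the multiplicities $[\sigma : \,\cdot\,]$ match on the two sides for every $\sigma \in \LCO$. Combining this with the formula from the first paragraph gives the desired identity.

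The only real substance beyond invoking these two inputs is confirming that the general counting theorem indeed applies verbatim to the cover $\widetilde G$. This should not be an obstacle: the formulation in \cite[Section 4]{BMSZ1} is phrased entirely in terms of coherent continuation representations, infinitesimal characters, and annihilator ideals, all of which make sense for $\widetilde G$, and the proof there goes through without change once $\cKgenG$ is substituted for $\cKG$. The genuine/non-genuine decomposition \eqref{eq:coheq} guarantees no cross-contamination between the two sectors. Hence the bulk of the argument is assembled from ingredients already in place, and the proposition follows.
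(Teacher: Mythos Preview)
Your proposal is correct and reaches the same conclusion, but the route differs slightly from the paper's. You apply the Barbasch--Vogan/BMSZ1 counting formula \emph{directly} to the genuine sector, obtaining
\[
\abs{\Unipgen(\wtG)} = \sum_{\sigma \in \LCO}[\sigma : \Coh_{\Lambda_2}(\cKgenG)],
\]
and then invoke the isomorphism part of Lemma~\ref{lem:iso} to pass to $\Coh_{\Lambda_1}(\cK(G_0))$. The paper instead avoids applying the counting formula to the genuine sector directly: it writes $\abs{\Unipgen(\wtG)} = \abs{\Unip_\ckcO(\wtG)} - \abs{\Unip_\ckcO(G_0)}$, applies \cite[Corollary~5.4]{BMSZ1} twice exactly as stated (once for $\wtG$, once for $G_0$), and then uses the direct-sum decomposition \eqref{eq:coheq} from Lemma~\ref{lem:iso} rather than the isomorphism itself. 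Your approach is more direct and conceptually cleaner, but it requires the small extra verification (which you acknowledge) that the counting formula holds for the genuine coherent continuation representation; the paper's subtraction trick sidesteps this by only citing the result in the form already proved in \cite{BMSZ1}.
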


Here and henceforth, $\Irr (E)$ denotes the set of isomorphism classes of irreducible representations of a finite group $E$; $[\ : \ ]$ indicates the multiplicity of the first
(irreducible) representation in the second representation, of a finite group. If it is necessary to specify the finite group, a subscript will be included. 

\begin{proof}
   Let $\Lambda_2 := \lambda_\ckcO+Q_0$ and $\Lambda := \lambda_\ckcO + Q = \Lambda_1\sqcup \Lambda_2$. 
   We apply \cite[Corollary 5.4]{BMSZ1} twice:
   \[
   \begin{split}
    \abs{\Unipgen(\wtG)} &= \abs{\Unip_\ckcO(\wtG)} - \abs{\Unip_\ckcO(G_0)}\\
     & =   \sum_{\sigma \in \LCO}[\sigma : \Coh_{\Lambda_2}(\cKG)]
     - \sum_{\sigma \in \LCO}[\sigma : \Coh_{\Lambda_2}(\cK(G_0))]\\
     & =   \sum_{\sigma \in \LCO}\left([\sigma : \Coh_{\Lambda}(\cKG)] - [\sigma : \Coh_{\Lambda_2,\Lambda_1}(\cKG)]\right)\\
     & =  \sum_{\sigma \in \LCO}[\sigma : \Coh_{\Lambda_1,\Lambda_2}(\cKG)]   \quad \text{(by \eqref{eq:coheq})}\\
     & =  \sum_{\sigma \in \LCO}[\sigma : \Coh_{\Lambda_1}(G_0)]. \hspace {2.0in}\Box
     \end{split} 
   \]
\end{proof}

Before we proceed to count explicitly genuine special unipotent representations case by case, 
we recall the following notations and definitions in \cite{BMSZ1}. The relevant labels (which specify the types of groups we consider) are $\star =B,D,D^*$.

Denote by $\sfS_n\subset \GL_n(\Z)$ the group of the permutation matrices. The Weyl group
\[
    W= \begin{cases}
    \sfW_n, &  \text{if $\star = B$}; \\
      \sfW_n' , &  \text{if $\star \in \{D, D^*\}$}, \\
  \end{cases}
\]
where $\sfW_n\subset \GL_n(\Z)$ is the subgroup generated by $\sfS_n$ and all the diagonal matrices with diagonal entries $\pm 1$, and $\sfW'_n\subset \GL_n(\Z)$ is the subgroup generated by $\sfS_n$ and all the diagonal matrices with diagonal entries $\pm 1$ and determinant $1$. 

As always, $\sgn$ denotes the sign character (of an appropriate Weyl group).
By inflating the sign character of $\sfS_{n}$ (viewed as a quotient of $\sfW_n$), we obtain a quadratic character of $\sfW_{n}$, to be denoted by $\bsgn$. 
In addition let  
\[
\sfH_{t} := \sfW_t\ltimes \set{\pm 1}^t, \quad (t\in\bN),
\]
to be viewed as a subgroup in $\sfW_{2t}$, and let $\eta$ be the quadratic character of $H_t$, as in \cite[Section 8.1]{BMSZ1}. 

We define
\begin{equation}
\label{eq:Cohb}
  \cCb_{\star,n}:=
  \begin{cases}
    \bigoplus_{\substack{2t+c+d=n}} \Ind_{\sfH_{t} \times \sfW_{c}\times \sfW_{d}}^{\sfW_{n}}
    \eta\otimes 1\otimes 1, &  \text{if $\star =B$}; \medskip\\
    \bigoplus_{\substack{2t+a=n}} 
    \Ind_{\sfH_{t} \times \sfS_a}^{\sfW_{n}}\eta\otimes 1, &  \text{if $\star = D $};\medskip \\
    \Ind_{\sfH_{\frac{n}{2}}}^{\sfW'_{n}}\eta, &  \text{if $\star = D^* $}.
  \end{cases}
\end{equation}
We also define
\begin{equation}\label{eq:Cohg}
  \cCg_{p,q} :=
  \begin{cases}
\displaystyle\bigoplus_{\substack{0\leq p-(2t+a+2r)\leq 1,\\0\leq q - (2t+a+2s)\leq 1}} \hspace{-2em} \Ind_{\sfH_{t} \times \sfS_{a}\times \sfW_s\times \sfW_r}^{\sfW_{n}}
    \eta \otimes 1 \otimes \sgn \otimes \sgn, &  \text{if $p+q=2n+1$}; \medskip\\
    \displaystyle \bigoplus_{\substack{2t+c+d+2r=p \\2t+c+d+2s=q}} \hspace{-1em}
    \Ind_{\sfH_{t} \times \sfW_s\times \sfW_r\times \sfW'_{c}\times \sfW_{d} }^{\sfW_{n}}\eta \otimes \bsgn \otimes \bsgn \otimes 1\otimes
    1, &  \text{if $p+q=2n$},\medskip \\
  \end{cases}
\end{equation}
and 
\begin{equation}\label{eq:CohgD*}
  \cCg_{D^{*},n} := \bigoplus_{\substack{2t+a=n}} \Ind_{\sfH_{t} \times \sfS_{a}}^{\sfW'_{n}} \eta \otimes \sgn, \qquad \qquad \text{if
    $\star =D^*$}.
\end{equation}

We introduce some notations relevant to irreducible representations and 
Young diagrams. As usual, we identify $\Irr(\sfW_n)$ with the set of bipartions (or a pair of Young diagrams) of total size $n$ (\cite[Section 11.4]{Carter}). In what follows, we let $\ycol{a_1,a_2, \cdots, a_k}$ (resp. $\yrow{a_1,a_2, \cdots, a_k}$) denote the Young diagram whose $i$-th column (resp. row) has length $a_i$ if $1\leq i\leq k$ and has length $0$ otherwise.  
Let $\bfrr_i(\ckcO)$ denote the length of $i$-th row of the Young diagram of a nilpotent orbit $\ckcO$ of a classical group.

\subsection{Real odd spin groups}
\def\nb{n_{\mathbf b}}
\def\ng{n_{\mathbf g}}
\def\ycol#1{[#1]_{\text{col}}}
\def\yrow#1{[#1]_{\text{row}}}
\def\PP{\mathrm{PP}}
\def\tvarepsilon{\tilde{\varepsilon}}

In this case  $\wtG = \Spin(p,q)$ and $G_0 = \SO(p,q)$, with $p+q=2n+1$.
Let $N$ be the smallest positive integer such that $\bfrr_{2N}(\ckcO) = 0$.
Suppose the multiset $\set{\bfrr_i(\ckcO): 1\leq i \leq  2N}$ is  
\[
\set{2r'_{1}+1, 2r'_{1}+1,2r'_{2}+1,2r'_{2}+1,\cdots,2r'_{l}+1,2r'_{l}+1,
2r''_{1},2r''_{2}, \cdots,2r''_{2k}}.
\]
with $2l+2k = 2N$,  
$r'_i\geq r'_{i+1}$ (for $1\leq i < l$) and $r''_i\geq r''_{i+1}$ (for $1\leq i < 2k$). 
Note that $r''_{2k} = 0$ by definition. 

Let 
\[
\nb = l+\sum_{i=1}^l 2r'_i \qquad \text{ and } \qquad
\ng = \sum_{i=1}^{2k} r''_i. 
\]
Then \[
\Lambda_1  = (\underbrace{\half, \cdots \half}_{\nb\text{-terms}},\underbrace{0, \cdots 0}_{\ng\text{-terms}} ) + \bZ^n \qquad \text{ and } \qquad
W(\Lambda_1) = \sfW_{\nb}\times \sfW_{\ng}. 
\]

As a $W(\Lambda_1)$-module, the coherent continuation representation is   
\[
   \Coh_{\Lambda_1}(\cK(G_0)) =
   \begin{cases}
       \cCg_{p-\ng,q-\ng} \otimes \cCb_{B,\ng},& \text{if $p,q\geq \ng$;}\\
       0, & \text{otherwise.}
   \end{cases}
\]
See \cite[Propositions 8.1 and 8.2]{BMSZ1}. 

As in \cite[Section 2.8]{BMSZ1}, let
\[
\PP(\ckcO) :=   \set{(2i, 2i+1) | r''_{2i}>r''_{2i+1} \, \text{ and }\, 1\leq i< k}
\]
be the set of primitive pairs attached to $\ckcO$. The Lusztig left cell $\LCO$ in $\Irr(W(\Lambda_1))$ is given by 
\[
\LCO = \set{\tau_b \otimes \tau_\wp | \wp \subseteq \PP(\ckcO)},
\]
where 
\[
    \tau_b = \ycol{r'_{1}+1,r'_{2}+1, \cdots, r'_{l}+1} \times \ycol{r'_{1},r'_{2}, \cdots, r'_{l}}, 
\]
and  
\[
    \tau_\wp = \ycol{l_{\wp,1},l_{\wp,2},\cdots ,l_{\wp, k+1}}\times\ycol{r_{\wp,1}, r_{\wp,2}, \cdots, r_{\wp, k+1}}. 
\]
Here $(l_{\wp,1},l_{\wp,k+1}) = (r''_1,0)$ and for $1< i \leq k$,    
\[
    (l_{\wp,i}, r_{\wp,i+1}) = \begin{cases}
        (r''_{2i+1},r''_{2i}), & \text{$(2i,2i+1) \in \wp$};\\
        (r''_{2i},r''_{2i+1}), & \text{otherwise}. 
    \end{cases}
\]
See \cite[Proposition 8.3]{BMSZ1}. 


\begin{lemma}
We have 
  \[
  [\tau_{b}:\cCg_{p-\ng,q-\ng}] =  \begin{cases}
      1, & \text{ if $|p-q|=1$;}\\
      0, & \text{otherwise.}
  \end{cases}
  \]
\end{lemma}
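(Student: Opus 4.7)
The proof proceeds by Frobenius reciprocity followed by a Littlewood--Richardson branching analysis. Applied to each summand in the definition \eqref{eq:Cohg} of $\cCg_{p-\ng, q-\ng}$, Frobenius reciprocity yields
\[
[\tau_b : \cCg_{p-\ng, q-\ng}] = \sum_{(t,a,s,r)} [\eta \otimes \mathbf{1} \otimes \sgn \otimes \sgn : \tau_b|_{\sfH_t \times \sfS_a \times \sfW_s \times \sfW_r}],
\]
where the sum runs over tuples with $2t+a+s+r = \nb$ satisfying the parity constraints in \eqref{eq:Cohg}.

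To evaluate each summand I would decompose the restriction of $\tau_b = (\alpha, \beta)$, with $\alpha = \ycol{r'_1+1, \ldots, r'_l+1}$ and $\beta = \ycol{r'_1, \ldots, r'_l}$, via the Littlewood--Richardson rule, and then match the characters factor-by-factor using: (i) $\Ind_{\sfH_t}^{\sfW_{2t}}\eta = \bigoplus_{\lambda \vdash t}(\lambda, \lambda)$, forcing $(\alpha_1, \beta_1) = (\lambda, \lambda)$ for some $\lambda \vdash t$; (ii) the trivial of $\sfS_a$ occurs in $(\alpha_2, \beta_2)|_{\sfS_a}$ (with multiplicity one, by Pieri) iff $\alpha_2$ and $\beta_2$ are both single rows; and (iii) the sign bipartition of $\sfW_k$ is $(\emptyset, (1^k))$, forcing $(\alpha_3, \beta_3) = (\emptyset, (1^s))$ and $(\alpha_4, \beta_4) = (\emptyset, (1^r))$. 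Writing $a_2 = |\alpha_2|$, $b_2 = |\beta_2|$, these conditions translate into the Pieri identities $\alpha = \lambda \cdot (a_2)$ and $\beta = \lambda \cdot (b_2) \cdot (1^s) \cdot (1^r)$ with $a_2 + b_2 = a$ and $a_2 - b_2 - s - r = |\alpha| - |\beta| = l$.

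The key structural observation is that $\alpha$ has exactly $l$ columns, so every horizontal strip inside $\alpha$ contains at most $l$ boxes. In particular $a_2 \leq l$, which, combined with $a_2 = l + b_2 + s + r$ and $b_2, s, r \geq 0$, forces $a_2 = l$ and $b_2 = s = r = 0$. The unique valid splitting is then $\lambda = \beta$, $t = |\beta|$, $a = l$, with all Littlewood--Richardson coefficients equal to $1$. Under this tuple the parity conditions of \eqref{eq:Cohg} reduce to $(p-\ng)-\nb,\ (q-\ng)-\nb \in \{0, 1\}$; since their sum is $(p-\ng)+(q-\ng)-2\nb = 1$, exactly one is $0$ and the other is $1$, which is equivalent to $|p-q|=1$. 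In that case the multiplicity is $1$; otherwise no tuple contributes and the multiplicity is $0$.

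The main obstacle is the shape argument excluding all alternative splittings. It rests on the explicit column structure of $\tau_b$, specifically the fact that the first row of $\alpha$ has length exactly $l$, together with the interaction of the Pieri constraints on $\alpha$ and $\beta$ through the shared partition $\lambda$. Once this column bound is secured, the remaining verification is routine.
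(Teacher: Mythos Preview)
Your argument is correct and follows essentially the same route as the paper: both compute the multiplicity via the Littlewood--Richardson rule, with the paper packaging the computation in the painted-bipartition formalism of \cite[Section~2.8]{BMSZ1} while you unwind it directly via Frobenius reciprocity and Pieri. The key step in both is the observation that the left partition $\alpha$ of $\tau_b$ has exactly $l$ columns, which forces the unique splitting $(t,a,s,r) = (|\beta|, l, 0, 0)$ and hence reduces the index constraint to $|p-q|=1$.
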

\begin{proof}
    The multiplicity is computed by the Littlewood-Richardson rule which is reformulated in terms of counting painted bipartitions as in \cite[Section~2.8]{BMSZ1}. 
\end{proof}

\begin{lemma}
For $\wp\subseteq \PP(\ckcO)$, we have
\[
 [\tau_{\wp}:\cCb_{B,n_{g}}] =  \begin{cases}
     1,  & \text{if $\wp = \emptyset$ and $\bfrr_{2i+1}(\ckcO) = \bfrr_{2i+2}(\ckcO)$
     for all $i\in \bN$;}\\
     0,  & \text{otherwise.}
 \end{cases}
\]
\end{lemma}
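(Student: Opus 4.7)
The plan is to compute $[\tau_\wp : \cCb_{B,n_g}]$ by Frobenius reciprocity and the Littlewood--Richardson rule, in parallel with the painted-bipartition counts of \cite[Sections~2.8 and~8]{BMSZ1}. From \eqref{eq:Cohb},
\[
[\tau_\wp : \cCb_{B,n_g}] = \sum_{2t+c+d = n_g} [\tau_\wp : \Ind_{\sfH_t \times \sfW_c \times \sfW_d}^{\sfW_{n_g}} \eta \otimes 1 \otimes 1],
\]
and the standard decomposition of $\Ind_{\sfH_t}^{\sfW_{2t}} \eta$ into bipartitions of the form $(\mu, \mu^t)$ translates each summand into a count of Littlewood--Richardson decompositions of $\tau_\wp = (\mu_1, \mu_2)$ as $(\mu, \mu^t)$ times a horizontal strip of size $c$ on the left and of size $d$ on the right.

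First, I would reinterpret such decompositions as painted bipartitions of $\tau_\wp$ of box type $B$ in the sense of \cite[Section~2.8]{BMSZ1}: each pair of conjugate columns $(\mu, \mu^t)$ matches a column of $\mu_1$ with a column of $\mu_2$ of equal height, while the two horizontal strips absorb at most two short column-strips.

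Second, I would analyze the explicit column heights of $\tau_\wp$. When $\wp = \emptyset$, each pair $(l_{\wp,i}, r_{\wp, i+1})$ equals $(r''_{2i}, r''_{2i+1})$, so the pairing $l_{\wp,i}\leftrightarrow r_{\wp,i+1}$ produces columns of equal heights exactly when $r''_{2i}=r''_{2i+1}$ for all $1\le i \le k$, which is equivalent to $\bfrr_{2i+1}(\ckcO)=\bfrr_{2i+2}(\ckcO)$ for all $i\in\bN$. In this case a unique matching exists, producing multiplicity $1$, after checking that the remaining columns $l_{\wp,1}=r''_1$ and $r_{\wp,k+1}=0$ are the only leftovers, forcing the horizontal-strip sizes uniquely.

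Third, I would establish vanishing in the remaining cases. For any $(2i, 2i+1) \in \wp$, the definition of $\PP(\ckcO)$ forces $r''_{2i}>r''_{2i+1}$, and the swap $(l_{\wp,i}, r_{\wp,i+1}) = (r''_{2i+1}, r''_{2i})$ places a strictly taller column on the right than its partner on the left; since horizontal strips add at most one box per column, they cannot repair a height gap $\ge 1$ in multiple columns, so no matching is possible. The same obstruction kills the multiplicity when $\wp=\emptyset$ but $r''_{2i}>r''_{2i+1}$ for some $i$.

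The main obstacle will be controlling the contribution of the two horizontal strips so that they cannot spuriously patch up column mismatches and inflate the count, and verifying that the pairing in the affirmative case really is unique. I expect both to be handled by the same painted-bipartition bookkeeping used in the proof of \cite[Proposition~8.3]{BMSZ1}, adapted to the box type $B$ data $(l_{\wp,\cdot}, r_{\wp,\cdot})$ rather than the data for $\tau_b$.
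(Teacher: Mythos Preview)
Your overall strategy (Frobenius reciprocity, Littlewood--Richardson, painted bipartitions) is the same as the paper's, but several details are wrong and they derail the argument.

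First, $\Ind_{\sfH_t}^{\sfW_{2t}}\eta$ decomposes into bipartitions $(\mu,\mu)$ with the \emph{same} shape on both sides, not $(\mu,\mu^t)$; see \cite[(8.15)]{BMSZ1}. Second, in the type~$B$ painting rules of \cite[Definition~8.5]{BMSZ1}, the trivial characters of $\sfW_c$ and $\sfW_d$ both contribute horizontal strips to the \emph{left} diagram of $\tau_\wp$, not one to each side. Thus a valid painting exists iff, after removing two horizontal strips from the left diagram, what remains equals the right diagram.

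With this corrected picture, the decisive observation---which you do not make---is that $l_{\wp,i}\le r_{\wp,i}$ for every $i$ (same index). Hence the left diagram is already contained in the right one; deleting further boxes from the left can achieve equality only if the two were already equal and no $c$ or $d$ symbols are used. Equality $l_{\wp,i}=r_{\wp,i}$ for all $i$ then forces $\wp=\emptyset$ and $r''_{2i-1}=r''_{2i}$, which is the stated condition, and the unique all-$\bullet$ painting gives multiplicity~$1$.

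Your shifted pairing $l_{\wp,i}\leftrightarrow r_{\wp,i+1}$ is not the relevant comparison for the $(\mu,\mu)$ constraint, and your vanishing argument (``horizontal strips cannot repair a height gap $\ge 1$ in multiple columns'') is false as stated: a horizontal strip of size $s$ can add one box to each of $s$ distinct columns, so two strips could in principle close many gaps of size~$1$. It is the inequality $l_{\wp,i}\le r_{\wp,i}$, together with both strips being placed on the left, that actually forces the multiplicity to vanish.
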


\begin{proof}
  Note that we always have
  $l_{\wp,i} \leq r_{\wp,i}$ for $1\leq i \leq k+1$. 
\trivial[h]{
Note that 
$l_{\wp,i} = r_{2i+1} $ or $r_{2i}$,  $r_{\wp,i} = r_{2i-1}$ or $r_{2i}$,  and  $r_{2i+1}\leq r_{2i}\leq r_{2i-1}$.
}
  Again the multiplicity is computed by the Littlewood-Richardson rule and the result can be formulated in terms of counting certain painted bipartitions. In a bit more detail,  from the painting rules of type $B$ in \cite[Definition~8.5]{BMSZ1}, the symbols ``$c$'' and ``$d$'' are painted in the left diagram of $\tau_\wp$. The only possible painting allowed on $\tau_\wp$ will be to put the symbol ``$\bullet$'' in every box, in which case we will have $r''_{2i-1}=r_{\wp,i} = l_{\wp,i} = r''_{2i}$, $\wp=\emptyset$ and the multiplicity is $1$.
\end{proof}

Together with Proposition \ref{counting}, the above two lemmas imply Theorem \Cref{thm:Spinpq0} for real odd spin groups.

\subsection{Real even spin groups and quaternionic spin groups}

We now assume $G_0 = \SO(p,q)$ with $p+q=2n$, or $G_0=\SO^*(2n)$. Let $N$ be the smallest positive integer such that $\bfrr_{2N+1}(\ckcO) = 0$.
Suppose the multiset $\set{\bfrr_i(\ckcO): 1\leq i \leq  2N}$ is  
\[
\set{2r'_{1}, 2r'_{1},2r'_{2},2r'_{2},\cdots,2r'_{l},2r'_{l},
2r''_{1}+1,2r''_{2}+1, \cdots,2r''_{2k}+1}.
\]
with $2l+2k = 2N$,  
$r'_i\geq r'_{i+1}$ (for $1\leq i < l$) and $r''_i\geq r''_{i+1}$ (for $1\leq i < 2k$). 



Let 
\[
\nb = \sum_{i=1}^l 2r'_i \qquad \text{ and } \qquad 
\ng = k+\sum_{i=1}^{2k} r''_i. 
\]
Then 
\[
\Lambda_1  = (\underbrace{0, \cdots 0}_{\nb\text{-terms}},\underbrace{\half, \cdots \half}_{\ng\text{-terms}}) \qquad \text{ and } \qquad
W(\Lambda_1) = \sfW'_{\nb}\times \sfW'_{\ng}. 
\]

Recall that irreducible representations of $\sfW_n'$ are parameterized by unordered pairs of bipartitions with a label $I$ or $II$ attached when the pairs are equal to each other.
For the labeling convention, see \cite[Section 8.3]{BMSZ1}.   

As in \cite[Section 2.8]{BMSZ1}, let
\[
\PP(\ckcO) :=   \set{(2i, 2i+1) |  r''_{2i}>r''_{2i+1}>0 \text{ and } 1\leq i< k}
\]
be the set of primitive pairs attached to $\ckcO$. The Lusztig left cell $\LCO$ in $\Irr(W(\Lambda_1))$ is given by  
\[
\LCO = \set{\tau_b \otimes \tau_\wp | \wp \subseteq \PP(\ckcO)}, 
\]
where  
\[
    \tau_b = \set{\ycol{r'_{1},r'_{2}, \cdots, r'_{l}}, \ycol{r'_{1},r'_{2}, \cdots, r'_{l}}}_{I/II}
\]
(the label $I/II$ of $\tau_b$ is determined by $\ckcO$)
and  
\[
    \tau_\wp = \set{\ycol{l_{\wp,1},l_{\wp,2},\cdots ,l_{\wp, k}}, \ycol{r_{\wp,1},r_{\wp,2},\cdots ,r_{\wp, k}}}.  
\]
Here $(l_{\wp,1},r_{\wp,k}) = (r''_1+1,r''_{2k})$ and for $1\leq  i < k$,  
\[
    (l_{\wp,i+1}, r_{\wp,i}) = \begin{cases}
        (r''_{2i}+1,r''_{2i+1}), & \text{$(2i,2i+1) \in \wp$};\\
         (r''_{2i+1}+1,r''_{2i}), & \text{otherwise}. 
    \end{cases}
\]
See \cite[Proposition 8.3]{BMSZ1}. 

The following lemma is clear. 
\begin{lemma}\label{lem:lr}
For $1 \leq i \leq k$ and $\wp\subseteq \PP(\ckcO)$, we have
    \[
    l_{\wp,i} > r_{\wp,i}. 
    \]
  \trivial[h]{
  Case  $(2i-2,2i-1)\notin \PP(\ckcO)$:
  \begin{itemize}
  \item  If $(2i,2i+1)\notin \wp$, then $l_{\wp,i} = r_{2i-1}+1 > r_{2i}=r_{\wp,i}$. 
  \item   If $(2i,2i+1)\in \wp$, then $l_{\wp,i} = r_{2i-1}+1 > r_{2i} \geq r_{2i+1}+1=r_{\wp,1}$. 
\end{itemize}

  Case  $(2i-2,2i-1)\in \PP(\ckcO)$:
  \begin{itemize}
  \item  If $(2i,2i+1)\notin \wp$, then $l_{\wp,i} = r_{2i-2}+1 > r_{2i}=r_{\wp,i}$. 
  \item  If $(2i,2i+1)\in \wp$, then $l_{\wp,i} = r_{2i-2}+1 > r_{2i} \geq r_{2i+1}+1=r_{\wp,i}$. 
\end{itemize}
  }
\end{lemma}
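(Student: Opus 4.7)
The plan is to prove the inequality by a straightforward case analysis based on the recursive definitions of $l_{\wp,i}$ and $r_{\wp,i}$, combined with the monotonicity $r''_1 \geq r''_2 \geq \cdots \geq r''_{2k}$. The key observation driving the proof is a uniform one: with the indexing conventions set up in the paragraph preceding the lemma, $l_{\wp,i}$ always takes the form $r''_j + 1$ for some index $j \in \{2i-2, 2i-1\}$, while $r_{\wp,i}$ takes the form $r''_{j'}$ for some $j' \in \{2i, 2i+1\}$. In particular $j < j'$, so combining monotonicity with the crucial ``$+1$'' yields $l_{\wp,i} \geq r''_{2i-1}+1 > r''_{2i} \geq r_{\wp,i}$.

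I would first dispose of the boundary indices. For $i=1$ the definition $l_{\wp,1} = r''_1+1$ is built into the specification $(l_{\wp,1}, r_{\wp,k}) = (r''_1+1, r''_{2k})$, and $r_{\wp,1}$ is either $r''_2$ or $r''_3$ depending on membership of $(2,3)$ in $\wp$; in either case monotonicity gives $r''_1+1 > r''_2 \geq r_{\wp,1}$. (When $k=1$ the only case is $r_{\wp,1}=r''_2$.) For $i=k$ (assuming $k \geq 2$) we have $r_{\wp,k} = r''_{2k}$, while $l_{\wp,k}$ is read off the index-$(k-1)$ clause as $r''_{2k-2}+1$ or $r''_{2k-1}+1$, and in either case exceeds $r''_{2k}$.

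For the generic range $1 < i < k$ I would split into the four cases determined by whether $(2i-2, 2i-1) \in \wp$ and whether $(2i, 2i+1) \in \wp$. In each case the values of $l_{\wp,i}$ and $r_{\wp,i}$ are read directly from the recursion, and the inequality $l_{\wp,i} > r_{\wp,i}$ reduces to a telescoping application of $r''_s \geq r''_{s+1}$ together with the ``$+1$''. For example, if neither pair lies in $\wp$ then $l_{\wp,i} = r''_{2i-1}+1$ and $r_{\wp,i} = r''_{2i}$, so $l_{\wp,i} > r_{\wp,i}$ since $r''_{2i-1} \geq r''_{2i}$. The other three cases are handled identically.

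I do not anticipate a genuine obstacle: the content of the lemma is bookkeeping about the recursion, and the proof is almost identical to the computation already sketched in the hidden commentary. The only thing that requires care is keeping the index shift between ``the equation producing $l_{\wp,i}$'' (namely the index-$(i-1)$ clause) and ``the equation producing $r_{\wp,i}$'' (the index-$i$ clause) straight; once that is pinned down, each of the four subcases is a one-line application of monotonicity.
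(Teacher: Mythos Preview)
Your proposal is correct and follows essentially the same approach as the paper: the paper declares the lemma ``clear'' and its hidden commentary carries out exactly the case analysis you describe, reading off $l_{\wp,i}\in\{r''_{2i-2}+1,r''_{2i-1}+1\}$ and $r_{\wp,i}\in\{r''_{2i},r''_{2i+1}\}$ and invoking monotonicity together with the ``$+1$''. Your uniform observation that $l_{\wp,i}=r''_j+1$ with $j\leq 2i-1$ and $r_{\wp,i}=r''_{j'}$ with $j'\geq 2i$ is in fact a slightly cleaner way to package the same computation.
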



\subsubsection{The case when $G_0=\SO(p,q)$, $p+q=2n$}

In this case, the coherent continuation representation as a $W(\Lambda_1)$-module is
\[
   \Coh_{\Lambda_1}(\cK(G_0)) =
   \begin{cases}
       \cCg_{p-\ng,q-\ng} \otimes \cCb_{\ng},& \text{if $p,q\geq \ng$};\\
       0, & \text{otherwise.}
   \end{cases}
\]
Here the right-hand side is understood as its restriction to $W(\Lambda_1)=
\sfW'_{\nb}\times \sfW'_{\ng}$. See \cite[Propositions 8.1 and 8.2]{BMSZ1}. 

\begin{lemma}
    We have 
  \[ 
  [\tau_{b}:\cCg_{p-\ng,q-\ng}] = 
  \begin{cases}
      1, & \text{if $p=q$};\\
      0, & \text{otherwise.}
  \end{cases}
  \]
\end{lemma}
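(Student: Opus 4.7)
The plan is to compute the multiplicity via the Littlewood--Richardson rule, reformulated as counting painted bipartitions along the lines of \cite[Section~2.8]{BMSZ1} together with the type $D$ painting rules from \cite[Section~8]{BMSZ1}. I would proceed in three steps.

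First, I unfold the defining decomposition of $\cCg_{p-\ng,q-\ng}$ (the case $p+q=2n$): the sum is over tuples $(t,c,d,r,s)$ with $2t+c+d+2r=p-\ng$ and $2t+c+d+2s=q-\ng$, so $2t+c+d+r+s=\nb$ and $r-s=(p-q)/2$. Each summand is obtained by parabolic induction from a subgroup of $\sfW_\nb$, so the computation of $[\tau_b : \cCg_{p-\ng,q-\ng}]$ (as $\sfW'_\nb$-modules) reduces to a sum over $(t,c,d,r,s)$ of multiplicities that can be counted by painted bipartitions on the Young diagram $\{\ycol{r'_1,\ldots,r'_l},\ycol{r'_1,\ldots,r'_l}\}$.

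Second, I analyze which paintings are admissible on $\tau_b$. The two diagrams composing $\tau_b$ have identical shape $\ycol{r'_1,\ldots,r'_l}$. The type $D$ painting rules, combined with the $\bsgn\otimes\bsgn$ twists on the $\sfW_r,\sfW_s$ factors, force a column-by-column symmetric painting that in particular distributes the $\bsgn$-painted boxes equally on the two sides. This forces $r=s$, hence $(p-q)/2=0$ and $p=q$; if $p\neq q$, no painting exists and the multiplicity is $0$. When $p=q$, I would show that there is a unique admissible painting---obtained by pairing the $l$ identical columns and assigning the $\sfH_t$/$\sfW'_c$/$\sfW_d$ boxes according to the paired columns---and that it contributes multiplicity $1$.

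Third, and most delicately, I handle the $I/II$ label. Because the two constituent bipartitions of $\tau_b$ coincide, the $\sfW_\nb$-irreducible associated with the ordered pair splits upon restriction to $\sfW'_\nb$ into two pieces labelled $I$ and $II$; the orbit $\ckcO$ selects one of them by the convention in \eqref{def:relevant}. I would verify that the unique admissible painting constructed in the $p=q$ case lifts, via a fixed ordering of the two sides, to a single induced $\sfW_\nb$-constituent, and that its restriction to $\sfW'_\nb$ meets the chosen $I/II$ component with multiplicity exactly $1$.

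The main obstacle is the third step: keeping track of the $I/II$ label under restriction from $\sfW_\nb$ to $\sfW'_\nb$, and matching the label produced by the painting with the label that $\ckcO$ assigns to $\tau_b$, so that the total multiplicity comes out to $1$ rather than to $0$ or $2$.
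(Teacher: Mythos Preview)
Your approach via painted bipartitions is the same as the paper's, and it would work, but the paper's argument is considerably shorter because it exploits one observation you stop short of making. Since the two Young diagrams in $\tau_b$ are \emph{identical}, the only $\sfW_{\nb}$-irreducible whose restriction to $\sfW'_{\nb}$ can contain $\tau_b$ is $\ycol{r'_1,\ldots,r'_l}\times\ycol{r'_1,\ldots,r'_l}$ itself; and for this bipartition no admissible painting exists once any of $r,s,c,d$ is nonzero, not merely $r\neq s$. So the entire sum collapses to the single term with $2t=\nb$, namely $\Ind_{\sfH_{\nb/2}}^{\sfW_{\nb}}\eta$, and the constraints $2t+c+d+2r=p-\ng$, $2t+c+d+2s=q-\ng$ immediately give $p=q$. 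The multiplicity of $\ycol{r'_1,\ldots,r'_l}\times\ycol{r'_1,\ldots,r'_l}$ in $\Ind_{\sfH_{\nb/2}}^{\sfW_{\nb}}\eta$ is $1$ by \cite[(8.15)]{BMSZ1}.

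Your third step, which you flag as the main obstacle, then evaporates: the restriction of the $\sfW_{\nb}$-irreducible $\mu\times\mu$ (with $\mu=\ycol{r'_1,\ldots,r'_l}$) to $\sfW'_{\nb}$ is exactly $\{\mu,\mu\}_I\oplus\{\mu,\mu\}_{II}$, so \emph{each} labelled piece occurs with multiplicity~$1$. There is no need to match the label produced by any painting against the label that $\ckcO$ assigns to $\tau_b$; whichever label $\tau_b$ carries, the answer is~$1$.
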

\begin{proof}
    Since the pair of Young diagrams in $\tau_b$ have the same shape, 
    $\tau_b$ does not occur in any of the terms of \eqref{eq:Cohg} with any of $s,r,c,d$ non-zero. 
    On the other hand, $\tau_b$ occurs with multiplicity one in 
    $\Ind_{\sfH_{\frac{\nb}{2}}}^{\sfW_{\nb}} \eta $ (see \cite[(8.15)]{BMSZ1}, for example).
\end{proof}

\begin{lemma}\label{lem:Sppq1}
Suppose $\ng > 0$ and $\wp\subseteq \PP(\ckcO)$. 
Then 
  \[ 
  [\tau_{\wp}:\cCb_{D,\ng}] = 
  \begin{cases}
      2, & \text{if $\wp=\emptyset$ and $r''_{2i+1} = r''_{2i+2}$ for all $i\in \bN$};\\
      0, & \text{otherwise.}
  \end{cases}
  \]
\end{lemma}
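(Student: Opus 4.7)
The plan is to compute $[\tau_\wp : \cCb_{D,n_g}]$ via the Littlewood-Richardson rule, recast combinatorially in terms of painted bipartitions as in \cite[Section~8]{BMSZ1}, following the same scheme as the two preceding lemmas in this section.

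First, Lemma~\ref{lem:lr} shows that the two Young diagrams $\lambda_\wp := \ycol{l_{\wp,1}, \dots, l_{\wp,k}}$ and $\mu_\wp := \ycol{r_{\wp,1}, \dots, r_{\wp,k}}$ constituting $\tau_\wp$ are distinct, already from their first columns. Hence $\tau_\wp$, as an irreducible of $\sfW'_{n_g}$, is the common restriction of the two distinct $\sfW_{n_g}$-irreducibles $(\lambda_\wp, \mu_\wp)$ and $(\mu_\wp, \lambda_\wp)$. Since $\cCb_{D,n_g}$ is an $\sfW_{n_g}$-representation whose defining induction data is symmetric under exchanging the two positions, these two $\sfW_{n_g}$-irreducibles occur in $\cCb_{D,n_g}$ with equal multiplicity, and Frobenius reciprocity yields
\[
[\tau_\wp : \cCb_{D,n_g}] = 2\,[(\lambda_\wp, \mu_\wp) : \cCb_{D,n_g}].
\]
It therefore suffices to show that $[(\lambda_\wp, \mu_\wp) : \cCb_{D,n_g}]$ equals $1$ under the stated conditions and $0$ otherwise.

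I would then expand $\cCb_{D,n_g}$ using \eqref{eq:Cohb} and carry out the multiplicity computation via the painting formalism of \cite[Definition~8.5]{BMSZ1}. In the type-$D$ setting only two paint classes arise: the $\eta$-paint from the $\sfH_t$-factor (vertical dominoes in a single column) and the trivial-character paint from the $\sfS_a$-factor (horizontal strips spanning matched columns of the two diagrams). The strict inequality $l_{\wp,i} > r_{\wp,i}$ supplied by Lemma~\ref{lem:lr} kills every $\sfS_a$-painting of positive weight, so only pure $\eta$-paintings can contribute; these in turn force $l_{\wp,i} = r_{\wp,i} + 1$ for every~$i$, and when this holds there is exactly one valid painting.

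The remaining step is to translate the equalities $l_{\wp,i} = r_{\wp,i} + 1$ back into conditions on $\wp$ and the $r''_j$ by unwinding the recursion that defines $l_{\wp,i}$ and $r_{\wp,i}$. A direct walk-through yields $r''_{2i-1} = r''_{2i}$ for every $1 \le i \le k$, matching the stated parity condition; moreover, any primitive pair $(2j, 2j+1) \in \wp$ forces $r''_{2j-1} = r''_{2j+1}$, which combined with the chain $r''_{2j-1} \geq r''_{2j} \geq r''_{2j+1}$ gives $r''_{2j} = r''_{2j+1}$ and contradicts the defining inequality of $\PP(\ckcO)$. The main obstacle is the systematic painting bookkeeping in the second step: one must verify that, outside the stated case, either the $\eta$-paint fails a column-height match or the $\sfS_a$-paint intrudes into a column where $l_{\wp,i} > r_{\wp,i} + 1$ leaves no room, so that no painted bipartition survives.
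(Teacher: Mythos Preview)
Your reduction from $\sfW'_{n_g}$ to $\sfW_{n_g}$ is correct and equivalent to the paper's (the paper phrases it via $\cCb_{D,n_g}\otimes\varepsilon\cong\cCb_{D,n_g}$, which amounts to your ``symmetric under exchanging the two positions''). Your final translation of $l_{\wp,i}=r_{\wp,i}+1$ into $r''_{2i-1}=r''_{2i}$ and $\wp=\emptyset$ is also fine.

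The gap is in the painting step: you have misidentified the two paint contributions, and your intermediate claim ``only pure $\eta$-paintings can contribute'' (i.e.\ the $a=0$ summand) is false. In the decomposition
\[
\cCb_{D,n_g}=\bigoplus_{2t+a=n_g}\Ind_{\sfH_t\times\sfS_a}^{\sfW_{n_g}}\eta\otimes 1,
\]
the $\eta$-factor contributes boxes at \emph{matching} positions in the two diagrams (since $\Ind_{\sfH_t}^{\sfW_{2t}}\eta=\bigoplus_{|\lambda|=t}\lambda\times\lambda$, cf.\ \cite[(8.15)]{BMSZ1}), not ``vertical dominoes in a single column''; the $\sfS_a$-trivial factor contributes a horizontal strip split between the two diagrams (since $\Ind_{\sfS_a}^{\sfW_a}1=\bigoplus_{b+c=a}\yrow{b}\times\yrow{c}$), not a strip ``spanning matched columns''. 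With your claim $a=0$ the two diagrams would be forced to have the \emph{same} shape, giving $l_{\wp,i}=r_{\wp,i}$, which contradicts Lemma~\ref{lem:lr} and would make the multiplicity vanish identically---so your argument as written does not produce $l_{\wp,i}=r_{\wp,i}+1$.

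The correct analysis (which is what the paper does) is the opposite: the $\eta$-dots must fill all of $\tau_{\wp,R}$ and the matching boxes in $\tau_{\wp,L}$; the leftover $\tau_{\wp,L}/\tau_{\wp,R}$ must then be a horizontal strip coming from the $\sfS_a$-factor. Since $l_{\wp,i}>r_{\wp,i}$, this forces $l_{\wp,i}=r_{\wp,i}+1$ for every $i$, and the unique surviving summand is $a=k$, with exactly one painting.
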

\begin{proof}
  By Lemma~\ref{lem:lr}, $\tau_{\wp}=\tau_{\wp,L}\times \tau_{\wp,R}$, with $\tau_{\wp,L}\ne \tau_{\wp,R}$. 
 Therefore, \[\Ind_{\sfW'_{\ng}}^{\sfW_{\ng}} = \tau_{\wp,L}\times \tau_{\wp,R} \oplus  (\tau_{\wp,L}\times \tau_{\wp,R}\otimes \varepsilon),\]  
 where $\varepsilon$ is the quadratic character of $\sfW_{\ng}$ whose kernel is $\sfW'_{\ng}$.  
 Now 
  \[
  \begin{split}
    [\tau_\wp:\cCb_{D,\ng}]_{\sfW'_{\ng}}  & = 
    [\tau_{\wp,L}\times \tau_{\wp,R}: \cCb_{D,\ng}]_{\sfW_{\ng}}
    + [\tau_{\wp,L}\times \tau_{\wp,R}\otimes \varepsilon:\cCb_{D,\ng}]_{\sfW_{\ng}}\\
    & = 2  [\tau_{\wp,L}\times \tau_{\wp,R}: \cCb_{D,\ng}]_{\sfW_{\ng}}
  \end{split}
  \]
  since $\cCb_{D,\ng} \otimes \varepsilon  = \cCb_{D,\ng} $. 
   It is easy to see that (by the painting rules of type $D$ in \cite[Section 2.8]{BMSZ1}, for example), 
 $\tau_{\wp,L}\times \tau_{\wp,R}$ can only occur in the term of \eqref{eq:Cohb} with $a = k$. 
    When this is the case, it occurs with multiplicity one and 
    \begin{equation}\label{eq:lr}
        l_{\wp,i} = r_{\wp,i}+1 \quad \text{for $1\leq i \leq k$.}
    \end{equation}
    The equation \eqref{eq:lr} forces $r''_{2i-1} = r''_{2i}$ for $1\leq i \leq k$ and $\wp = \emptyset$.  
    This completes proof of the lemma. 
   \trivial[h]{
  By painting rules, the only possible painting on $\tau_{\wp,L}\times \tau_{\wp,R}$ is to put  ``c'' at the bottom
  of each column in the left diagram and  dots in other places.
  In this case, $r_{2i+1}=r_{2i+2}$, $\wp=\emptyset$. 
  }
\end{proof}

Together with Proposition \ref{counting}, the above two lemmas imply Theorem~\ref{thm:Spinpq0} for real even spin groups.  

\subsection{The case when $G_0=\SO^*(2n)$}
\label{secpf1star}

In this case, the coherent continuation representation as a $W_{\Lambda_1}$-module is 
\[
  \Coh_{\Lambda_1}(G_0)  = 
  \Ind_{\sfW'_{\ng}\times \sfW'_{\nb}}^{W_{\Lambda_1}}
  \cCg_{D^*, \nb}\otimes \cCb_{D^*, \ng},
\]
where 
\[
W_{\Lambda_1} = (\sfW_{\nb}\times \sfW_{\ng}) \cap \sfW'_{n}. 
\]
See \cite[Propositions 8.1 and 8.2]{BMSZ1}. 

\begin{lemma}
    If $\ng\neq 0$, then $[\tau_b\otimes \tau_{\wp}: \Coh_{\Lambda_1}(G_0)]= 0$ for each $\wp\in \PP(\ckcO)$. 
\end{lemma}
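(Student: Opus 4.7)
The plan is to apply Frobenius reciprocity together with Mackey's double coset formula, reducing the multiplicity to a computation inside $\cCb_{D^*, \ng}$ and then exploiting the strict inequalities $l_{\wp, i} > r_{\wp, i}$ supplied by Lemma~\ref{lem:lr}.

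First, since $[W_{\Lambda_1} : \sfW'_{\nb} \times \sfW'_{\ng}] = 2$, fix any $g_0 = (u_b, u_g) \in W_{\Lambda_1}$ with $u_b \in \sfW_{\nb} \setminus \sfW'_{\nb}$ and $u_g \in \sfW_{\ng} \setminus \sfW'_{\ng}$. Mackey's formula for restriction of an induced representation through an index-$2$ subgroup yields
\[
\Coh_{\Lambda_1}(G_0)|_{\sfW'_{\nb} \times \sfW'_{\ng}} = \bigl(\cCg_{D^*, \nb} \otimes \cCb_{D^*, \ng}\bigr) \oplus \bigl(\cCg_{D^*, \nb} \otimes \cCb_{D^*, \ng}\bigr)^{g_0}.
\]
Thus $[\tau_b \otimes \tau_\wp : \Coh_{\Lambda_1}(G_0)]$ is a sum of two terms, each factoring as a product of a $\sfW'_{\nb}$-multiplicity in $\cCg_{D^*, \nb}$ (or its $u_b$-conjugate) and a $\sfW'_{\ng}$-multiplicity in $\cCb_{D^*, \ng}$ (or its $u_g$-conjugate). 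It therefore suffices to show that both $[\tau_\wp : \cCb_{D^*, \ng}]_{\sfW'_{\ng}}$ and $[\tau_\wp : \cCb_{D^*, \ng}^{u_g}]_{\sfW'_{\ng}}$ vanish.

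Second, I would invoke the classical decomposition
\[
\Ind_{\sfH_t}^{\sfW_{2t}} \eta = \bigoplus_{\lambda \vdash t} (\lambda, \lambda),
\]
where $(\lambda, \lambda)$ denotes the irreducible $\sfW_{2t}$-representation labelled by the ordered bipartition with both parts equal to $\lambda$. Since $\sfH_{\ng/2} \subset \sfW'_{\ng}$, combining this with Clifford theory for the index-$2$ inclusion $\sfW'_{\ng} \subset \sfW_{\ng}$ shows that every irreducible constituent of $\cCb_{D^*, \ng} = \Ind_{\sfH_{\ng/2}}^{\sfW'_{\ng}} \eta$ is of the form $\{\lambda, \lambda\}_{\mathrm{I}/\mathrm{II}}$, i.e. corresponds to a bipartition with equal left and right Young diagrams; the same conclusion holds for its $u_g$-conjugate, since conjugation by $u_g$ preserves this class (at most permuting the labels $\mathrm{I} \leftrightarrow \mathrm{II}$).

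Third, by Lemma~\ref{lem:lr} the strict inequalities $l_{\wp, i} > r_{\wp, i}$ for $1 \leq i \leq k$ imply that the left Young diagram of $\tau_\wp$ has strictly larger total size than the right one, and in particular the two are distinct. Therefore $\tau_\wp$ is not of the form $\{\lambda, \lambda\}_{\mathrm{I}/\mathrm{II}}$, forcing both multiplicities above to be zero, and the lemma follows. The main obstacle is simply careful bookkeeping of the Mackey decomposition and the $u_g$-conjugation on $\sfW'_{\ng}$-irreducibles; the underlying decomposition of $\Ind_{\sfH_t}^{\sfW_{2t}} \eta$ is classical and is already implicit in the painted-bipartition framework of \cite{BMSZ1}, so no new tools are required.
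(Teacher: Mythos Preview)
Your argument is correct and follows essentially the same route as the paper's: both hinge on the observation that every irreducible constituent of $\cCb_{D^*,\ng}$ is parameterized by a bipartition with two equal parts, whereas Lemma~\ref{lem:lr} forces the two parts of $\tau_\wp$ to be unequal. The paper reaches the equal-shape conclusion in one line by citing the Littlewood-Richardson rule from \cite[(8.15)]{BMSZ1}, while you unpack the same content via the explicit decomposition $\Ind_{\sfH_t}^{\sfW_{2t}}\eta=\bigoplus_{\lambda\vdash t}(\lambda,\lambda)$ together with Clifford theory, and you are more careful than the paper about the Mackey step passing from $W_{\Lambda_1}$ down to $\sfW'_{\nb}\times\sfW'_{\ng}$.

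One small caveat: your Mackey paragraph assumes $[W_{\Lambda_1}:\sfW'_{\nb}\times\sfW'_{\ng}]=2$, which requires $\nb>0$ as well as $\ng>0$. When $\nb=0$ the induction is trivial and $\Coh_{\Lambda_1}(G_0)=\cCb_{D^*,\ng}$, so your steps two and three apply directly without any Mackey decomposition; you should say a word to cover this degenerate case.
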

\begin{proof}
    Applying the Littlewood-Richardson rule (see \cite[(8.15)]{BMSZ1}) to $\cCb_{D^*,\ng}$ (see \eqref{eq:Cohb}), we see that if $\sigma_1\otimes \sigma_2\in \Irr(\sfW_{\nb}\times \sfW_{\ng})$
    occurs in $\Coh_{\Lambda_1}(G)$, then the pair of partitions in $\sigma_2$ must have the same shape. On the other hand, $\tau_{\wp}$ is represented by a pair of partitions with different shapes by Lemma \Cref{lem:lr}.
    This proves the lemma.  
\end{proof}

 We are now in the case when $\ng = 0$ (and so $n_b=n$). We have
 \begin{equation}\label{eq:Dstar}
  \LCO = \set{\tau_b} \qquad \text{and} \qquad \Coh_{\Lambda_1}(G_0) = \cCg_{D^*, n}.     
 \end{equation}

Recall the notion of $G$-relevant orbit in \eqref{def:relevant}.
 
\begin{lemma}\label{lem:Spin*b}
Suppose $\ng=0$. Then 
    \[
    [\tau_b: \cCg_{D^*, n}]=
    \begin{cases}
        \prod_{i=1}^{l}(r'_i-r'_{i+1}+1), & \text{if $\ckcO$ is $G$-relevant;}\\
        0, & \text{otherwise.} 
    \end{cases}
    \]
    In all cases, it is equal to  $\sharp ( G\backslash \sqrt{-1} \g^* \cap \CO)$. 
    (Here $r'_{l+1} := 0$ by convention. )
\end{lemma}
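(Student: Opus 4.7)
The plan is to compute $[\tau_b:\cCg_{D^*,n}]$ by unwinding the definition \eqref{eq:CohgD*} and then identify the result combinatorially with a count of signed Young tableaux for $\SO^*(2n)$. Since $\cCg_{D^*,n}=\bigoplus_{2t+a=n}\Ind_{\sfH_t\times\sfS_a}^{\sfW'_n}\eta\otimes\sgn$, by Frobenius reciprocity the multiplicity equals $\sum_{2t+a=n}[\tau_b|_{\sfH_t\times\sfS_a}:\eta\otimes\sgn]$. I would first pass from $\sfW'_n$ to $\sfW_n$ to avoid the label $I/II$ issue: using $\Ind_{\sfW'_n}^{\sfW_n}\tau_b=\tau_b^{\sfW_n}$ (the unique $\sfW_n$-representation restricting to the pair $\{\tau_b^I,\tau_b^{II}\}$, which is irreducible for the symmetric bipartition), and the fact that $\Ind_{\sfH_t\times\sfS_a}^{\sfW_n}\eta\otimes\sgn$ decomposes under $\sfW'_n$ according to a label-sensitive rule, one reduces to a Littlewood–Richardson calculation for $\sfW_n$ on the symmetric bipartition $\{\ycol{r'_1,\ldots,r'_l},\ycol{r'_1,\ldots,r'_l}\}$, much as in \cite[Section 8]{BMSZ1}.

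Next I would translate the multiplicity into a count of painted bipartitions in the sense of \cite[Section 2.8]{BMSZ1}, adapted to type $D^*$. Because both diagrams of $\tau_b$ have the same column shape $(r'_1,\ldots,r'_l)$, the only paintings compatible with the $\sfH_t\times\sfS_a$-structure pair each column of the left diagram with the corresponding column of the right diagram, with the $a$ ``unpaired'' boxes forming a column-strict subconfiguration. The number of legal paintings factors as an independent choice per column-length jump $r'_i-r'_{i+1}$: for each $i$ one chooses an integer in $\{0,1,\ldots,r'_i-r'_{i+1}\}$ indicating where in the step from height $r'_{i+1}$ to $r'_i$ the paired boxes end and the $\sfS_a$-part begins. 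This produces exactly $\prod_{i=1}^l(r'_i-r'_{i+1}+1)$ paintings, giving the claimed formula when $\ckcO$ is $G$-relevant. When $\ckcO$ is \emph{not} $G$-relevant, the label $I/II$ of $\tau_b$ is the opposite one, and an argument with the sign character $\varepsilon$ (kernel $\sfW'_n$) shows that each painting contributes with the wrong sign, so the multiplicity is $0$.

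Finally, to match this count with $\sharp(G\backslash\sqrt{-1}\g^*\cap\CO)$ I would invoke the Kostant–Sekiguchi correspondence: real forms of $\CO$ meeting $\sqrt{-1}\g^*$ are in bijection with $\SO^*(2n)$-orbits on the real nilpotent cone that are parametrized (in the $D^*$ convention, see \cite{CM}) by signed Young diagrams whose underlying shape is the transpose of $\check\CO$, with the sign constraints forced by the quaternionic form. For a very even $\check\CO$ with rows $2r'_1,2r'_1,\ldots,2r'_l,2r'_l$, counting admissible sign patterns column-by-column gives the very same product $\prod_{i=1}^l(r'_i-r'_{i+1}+1)$ (with the relevance condition isolating the single class whose union of columns fits inside $\sqrt{-1}\g^*$); the parity/relevance dichotomy shows that this vanishes precisely when $\ckcO$ is not $G$-relevant.

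The main obstacle I anticipate is keeping the label $I/II$ bookkeeping consistent across the three steps: it must be tracked simultaneously for $\tau_b$ (in $\Irr(\sfW'_{\nb})$), for the induced representation on the $\sfW_n$-side, and for the $G$-relevance of $\ckcO_I$ versus $\ckcO_{II}$ as specified by \eqref{def:relevant}. The sign-character twist argument in the style of Lemma~\ref{lem:Sppq1} is the natural tool to resolve this, and verifying that the relevant/irrelevant dichotomy on the Langlands dual side lines up with the non-emptiness of $\sqrt{-1}\g^*\cap\CO$ on the real side is the delicate point requiring care.
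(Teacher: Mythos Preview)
Your overall plan matches the paper's proof closely: reduce $[\tau_b:\cCg_{D^*,n}]$ to a Littlewood--Richardson/painted-bipartition count on the symmetric bipartition $\ycol{r'_1,\ldots,r'_l}\times\ycol{r'_1,\ldots,r'_l}$, verify that the number of admissible $D^*$-paintings equals $\prod_{i=1}^{l}(r'_i-r'_{i+1}+1)$, and then identify this product with the number of signed Young diagrams of shape $\cO$ parametrizing the real nilpotent $G$-orbits in $\sqrt{-1}\g^*\cap\cO$. That part is fine and essentially identical to what the paper does.

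The gap is in your treatment of the label $I/II$. Your proposed mechanism---``an argument with the sign character $\varepsilon$ shows that each painting contributes with the wrong sign, so the multiplicity is $0$''---does not work. Multiplicities are nonnegative; paintings do not come with signs. More concretely, the analogue of the Lemma~\ref{lem:Sppq1} trick fails here precisely because the two Young diagrams in $\tau_b$ coincide: on $\sfW_n$ one has $(\mu\times\mu)\otimes\varepsilon\cong\mu\times\mu$, so twisting by $\varepsilon$ gives no new relation, while on $\sfW'_n$ the character $\varepsilon$ is trivial. Passing to $\sfW_n$ therefore only yields the \emph{sum} $[\tau_b^{I}:\cCg_{D^*,n}]+[\tau_b^{II}:\cCg_{D^*,n}]$ and cannot by itself separate the two labels.

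The paper resolves this more directly: by the explicit Littlewood--Richardson rule in \cite[(8.15)]{BMSZ1}, every irreducible $\sfW'_n$-constituent of $\cCg_{D^*,n}$ carries the label $I$. Hence $[\tau_b^{II}:\cCg_{D^*,n}]=0$ automatically, and the painting count on the $\sfW_n$-side gives exactly $[\tau_b^{I}:\cCg_{D^*,n}]$. The dichotomy ``$\ckcO$ is $G$-relevant'' versus ``not'' is then just the statement that $\tau_b$ carries label $I$ if and only if $\ckcO$ is $G$-relevant. This is the clean replacement for your sign-character step; once you make this substitution, your argument goes through and coincides with the paper's.
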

\begin{proof}
        By the Littlewood-Richardson rule (\cite[(8.15)]{BMSZ1}),
        all representations occurring in $\cCg_{D^*, n}$ (see \eqref{eq:CohgD*}) are labeled by $I$.
        So if $\tau_b$ is to occur in it, $\tau_b$ must be labeled by $I$, which is equivalent to $\ckcO $ being $G$-relevant. 
    The rest of the claim about the multiplicity of $\tau_b$ in $\cCg_{D^*, n}$ also follows from the Littlewood-Richardson rule. 
    Concretely, the multiplicity is counted by the number of paintings on the bipartition of shape $\ycol{r'_1,\cdots, r'_l}\times \ycol{r'_1,\cdots, r'_l}$, using the painting rules of type $D^*$ in \cite[Section 2.8]{BMSZ1}. 
    It is routine to check that there are  
         $\prod_{i=1}^{l}(r'_i-r'_{i+1}+1)$  such paintings in total. 


When $\ckcO$ is $G$-relevant, $\prod_{i=1}^l(r'_{i}-r'_{i+1}+1)$ also counts the number of signed Young diagrams having shape $\cO$, which is the number of real nilpotent orbits in $\cO$. When $\ckcO$ is not $G$-relevant, 
$\sqrt{-1} \g^* \cap \CO$ is empty. The lemma thus follows.        
         \end{proof}

    \trivial[h]{
    Translation to the painted bipartition: 
    \[
    \begin{split}
     [\tau_b,
     \bigoplus_{\substack{2t+a=n}} \Ind_{\sfH_{t} \times \sfS_{a}}^{\sfW'_{n}} \tilde{\varepsilon} \otimes \sgn]_{\sfW'_n}
     & =
     [\tau_b,
     \bigoplus_{\substack{2t+a=n}} \Ind_{\sfH_{t} \times \sfS_{a}}^{\sfW_{n}} \tilde{\varepsilon} \otimes \sgn]_{\sfW'_n}\\
     & = 
     [\Ind_{\sfW'_n}^{\sfW_n}\tau_b,
     \bigoplus_{\substack{2t+a=n}} \Ind_{\sfH_{t} \times \sfS_{a}}^{\sfW_{n}} \tilde{\varepsilon} \otimes \sgn]_{\sfW_n}\\
     & = 
     [\tau_{b,L}\times \tau_{b,R},
     \bigoplus_{\substack{2t+a=n}} \Ind_{\sfH_{t} \times \sfS_{a}}^{\sfW_{n}} \tilde{\varepsilon} \otimes \sgn]_{\sfW_n}\\
    \end{split} 
    \]
    The last term is counted by paintings. 
    }

\delete{
\begin{lemma}
  Suppose $n_{g}=0$. Then $[\tau_{b}:\cCg_{n_{b}}]_{W'_{n_{b}}} = \#\Nil_{\cO}(G)$.
\end{lemma}
\begin{proof}
%
   
  Suppose $\ckcO$ is relevant. Then   
    $\# \Unip_\ckcO(G) =  \prod_{i=1}^l(r'_{i}-r'_{i+1}+1)$ 
    by Lemma~\ref{lem:Spin*b}. 
    This number is  the same as the number of valid signed Young diagrams having shape $\cO$ corresponding to the real nilpotent orbits in $\Lie(G)$.
  
    Suppose $\ckcO$ is not relevant. 
    Then both of $\#\Unip_\ckcO(G)$ and $\#\Nil_\cO(G)$ are zero.  
    This finishes the proof.
\end{proof}
}


Together with Proposition \ref{counting}, the above two lemmas imply Theorem~\ref{thm:Spinpq0} for $G=\Spin^*(2n)$.

\section{Genuine special unipotent representations of real spin groups}
Based on Theorem \ref{thm:Spinpq0}, we will prove Theorems \ref{thm:Spinpq03} to \ref{unip0} in this section and the next section.
We retain the notation and assumptions of Section \ref{sec:intro}.  

For the time being, $G$ can be either $\Spin(p,q)$ or $\Spin^*(2n)$. Recall 
the $\widetilde G$-representation $I(\chi)=\Ind_{\widetilde P}^{\widetilde G} \chi $, where $\chi$ is a genuine character on $\widetilde L$ of finite order. 

The following lemma follows from a general result of Barbasch 
\cite[Corollary~5.0.10]{B00}. See also \cite[Section 3]{MT}. 

\begin{lemma}\label{lemwf}
The wavefront cycle of $I(\chi)$ equals 
\[
\sum_{\mathbf o\in  \widetilde G\backslash \sqrt{-1} \g^* \cap \CO } \mathbf o. 
\]
\end{lemma}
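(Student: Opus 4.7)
The plan is to invoke Barbasch's formula for the wavefront cycle of a parabolically induced representation and then identify the resulting cycle with the asserted sum. Since $L_0$ is chosen so that $\CO$ is the induction $\Ind_{\l_\bC}^{\g_\bC}(0)$ of the zero orbit (Lemma \ref{lemf}(b)), the complex associated variety of $I(\chi)$ is contained in $\overline{\CO}$. Because $\chi$ is a finite-order (hence essentially unitary) character of $\widetilde L$, its own wavefront cycle on $\widetilde L$ is supported at the origin with multiplicity one.

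First I would apply \cite[Corollary~5.0.10]{B00} (see also \cite[Section 3]{MT}): for a parabolically induced representation $\Ind_{\widetilde P}^{\widetilde G}\chi$ with $\chi$ as above, the real wavefront cycle is obtained by ``real induction'' of the wavefront cycle of $\chi$ from $\widetilde L$ to $\widetilde G$. Concretely, this means
\[
\WF(I(\chi)) \;=\; \sum_{\mathbf o} m_{\mathbf o}\,\mathbf o,
\]
where $\mathbf o$ ranges over the $\widetilde G$-orbits in $\sqrt{-1}\g^* \cap \CO$ and $m_{\mathbf o}$ is the multiplicity prescribed by Barbasch's formula (the intersection number between the image of the real moment map and $\overline{\mathbf o}$). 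What remains is to show every $m_{\mathbf o}$ equals $1$.

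Next I would carry out the multiplicity computation. The key observation is that the real induction from the $\widetilde L$-orbit $\{0\} \subset \sqrt{-1}\l^*$ produces exactly the real forms of $\CO$ that meet the nilradical of $\p$; since $\CO$ is the complex induction of the zero orbit, the map from $\widetilde G \times_{\widetilde P} (\text{nilradical})$ to $\overline\CO$ is generically finite of degree one over each real form that appears. Combined with the fact that $\chi$ has multiplicity one at the origin, this forces each $m_{\mathbf o} = 1$, yielding the claimed formula.

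The main obstacle is the bookkeeping in the second step: one must check that every $\widetilde G$-orbit $\mathbf o$ in $\sqrt{-1}\g^* \cap \CO$ actually arises from the real induction (as opposed to only some of them), and that no extra real orbits in $\overline\CO \setminus \CO$ contribute. The first point follows from the standard fact that real induction from $\{0\}$ surjects onto the set of $\widetilde G$-orbits in $\sqrt{-1}\g^*\cap\CO$ in the classical-group setting at hand (using the explicit description of $L_0$ in Lemma~\ref{lemf}(a)), and the second from the fact that Barbasch's formula produces a cycle supported on the top-dimensional real orbits inside the associated variety.
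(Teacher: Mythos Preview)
Your overall strategy matches the paper's: invoke \cite[Corollary~5.0.10]{B00}, then check that (i) every real orbit $\mathbf o\subset\sqrt{-1}\g^*\cap\CO$ actually appears, and (ii) each multiplicity $m_{\mathbf o}$ equals $1$. Where your argument diverges is in step (ii), and there it has a genuine gap.

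Barbasch's formula does not give $m_{\mathbf o}$ as an ``intersection number'' or as the generic degree of the collapsing map $\widetilde G\times_{\widetilde P}\mathfrak n\to\overline\CO$; it gives
\[
m_{\mathbf o}=\frac{\sharp\, C(\widetilde G_X)}{\sharp\, C(\widetilde P_X)},\qquad X\in\sqrt{-1}\mathfrak n^*\cap\mathbf o,
\]
a ratio of component groups of the \emph{real} centralizers. Your sentence ``the map \ldots\ is generically finite of degree one over each real form that appears'' is neither proved nor sufficient: even granting that the complex moment map is birational (which is a nontrivial fact about these particular Richardson orbits and is not verified in your sketch), you would still need to pass from $(G_\bC)_X=(P_\bC)_X$ to the statement about real component groups, and you do not do this. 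Conversely, if the complex degree were greater than $1$ for some Richardson orbit, your heuristic would simply fail.

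The paper closes this gap by a different, more direct mechanism. For $X\in\sqrt{-1}\mathfrak n^*\cap\mathbf o$ one completes $X$ to an $\mathfrak{sl}_2$-triple and uses the Kostant/Barbasch--Vogan decomposition $\widetilde G_X=R\ltimes U_X$, with $R$ the centralizer of the triple and $U_X$ unipotent. Condition (b) of Lemma~\ref{lemf} (that $\CO$ is induced from the zero orbit of exactly this Levi) is then used to arrange, after conjugation, that $R\subset\widetilde L\subset\widetilde P$; hence $\widetilde P_X=R\ltimes(\widetilde P\cap U_X)$, and since unipotent groups are connected, $C(\widetilde G_X)=C(R)=C(\widetilde P_X)$, giving $m_{\mathbf o}=1$. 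This is the missing idea in your sketch: you should replace the vague degree-one assertion with this component-group computation via the $\mathfrak{sl}_2$-triple.
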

\begin{proof}
Let $\fnn$ be the nilpotent radical of $\Lie (\widetilde P)$.
By  \cite[Corollary~5.0.10]{B00}, the wavefront cycle of $I(\chi)$ is given by 
\[
WF(I(\chi)) = \sum_{\widetilde G \cdot X} \frac{\sharp (C(\widetilde G_X))}{\sharp(C(\widetilde P_X))} \widetilde G\cdot X,
\]
where the summation runs over the set of orbits of the form $\widetilde G \cdot X$ in
$\igg \cap \cO$ with $X\in \sqrt{-1} \mathfrak n^*$.  
Here $\widetilde G_X$ (resp. $\widetilde P_X$) denotes the centralizer of $X$ in $\widetilde G$ (resp. $\widetilde P$), and the symbol $C$ indicates the component group. 

It is routine to check that every orbit $\mathbf o \in \widetilde G\backslash \igg \cap \cO$ has a representative in $\sqrt{-1}\fnn^*$. Thus 
\[
WF(I(\chi)) = \sum_{\mathbf o\in  \widetilde G\backslash \sqrt{-1} \g^* \cap \CO } m_{\mathbf o}\mathbf o,
\]
where $m_{\mathbf o}=\frac{|C(\widetilde G_X)|}{|C(\widetilde P_X)|}$, with $X$ an element of $ \sqrt{-1}\fnn^* \cap \mathbf o$.

By a result of Kostant and Barbasch-Vogan \cite[Lemma 3.7.3]{CM},
$\widetilde G_X$ has a semidirect product decomposition: 
\[\widetilde G_X=R\ltimes U_X,\]
where the reductive part $R$ is the centralizer in $\widetilde G$ of the $\s\l_2$-triple containing $X$, and $U_X$ is the unipotant radical of $\widetilde G_X$.  

In view of condition (b) of Lemma \ref{lemf} and by replacing $\widetilde P$ by a $\widetilde G$-conjugate if necessary, the group $R$ will be contained in the Levi component $\widetilde L$ of $\widetilde P$ and therefore in $\widetilde P$ in particular. Since $\widetilde P_X=\widetilde P\cap \widetilde G_X$, we conclude that $\widetilde P_X = R\ltimes (\widetilde P\cap U_X)$. 
The reductive part of $\widetilde P_X$ and $\widetilde G_X$ are therefore isomorphic and consequently,
\[
\frac{\sharp(C(\widetilde G_X))}{\sharp (C(\widetilde P_X))} = 1. 
\]
The lemma follows. \end{proof}

\trivial[h]{
We can explicitly construct an element in $\bfoo\cap \fnn$. 
Recall that real nilpotent orbits are classified by signed Young diagrams.

First, suppose that $G_{\bC}$ is an even spin group.  
Then $\cO$ is very even, i.e., there is a non-negative integer $k$  such that 
$\bfrr_{2i+1}(\cO) = \bfrr_{2i+2}(\cO)>0$ is even for every $i<k$ and $\bfrr_{2k+1}(\cO)=0$.
For each real nilpotent orbit $\bfoo$ with the same Young diagram as $\cO$ and $X\in \bfoo$, one can construct a basis of $V$: 
\begin{equation}\label{eq:basis}
\Set{e_{i,l} |l \in \set{-\bfrr_{2i+1}(\cO)+1,-\bfrr_{2i+1}(\cO)+3 ,\cdots, \bfrr_{2i+1}(\cO)-1 }}_ {i<k}  
\end{equation}
such that
\begin{itemize}
    \item  $\inn{e_{i,l}}{e_{j,l'}} = \delta_{i,j} \delta_{l,-l'}$ for $i\in \bN$, $l,l'\in \bZ$,
    \item $X e_{i,l} = e_{i,l+2}$ (for $l\in \set{-\bfrr_{2i+1}(\cO)+1,-\bfrr_{2i+1}(\cO)+3 ,\cdots, \bfrr_{2i+1}(\cO)-3}$, 
    \item the form $\inn{\_}{\_}_j$ defined by $\inn{\_}{X^{j}\_}$ on 
    $V_j:=\sspan\set{e_{i,-2j+1}| \bfrr_{2i+1}(\ckcO) = 2j}$ 
    is non-degenerate ($V_j$ can be zero), 
    \item the reductive part of $G_X$ is isomorphic to the product $R$ of the isometry group of $\inn{\_}{\_}_j$ ($j$ runs over the set of positive integers). 
\end{itemize}

Suppose that $G_{\bC}$ is an odd spin group. Then there is a non-negative integer $k$ such that  then nonzero rows of $\cO$ satisfies   
$\bfrr_{2i+1}(\cO) = \bfrr_{2i+2}(\cO)>0$ is even for $i<k$, $\bfrr_{2k+1}(\cO) = 1$, and $\bfrr_{2k+2} = 0$. An isotropic vector $e_{k,0}$ together with \eqref{eq:basis} form a basis of $V$, with similar properties. 

Suppose $G$ is a quaternionic spin group.  By $G$-conjugations, we can assume that $P$ is the parabolic group stabilizes the flag $\set{F_{\geq j}}$
with 
\[
F_{\geq j} := \sspan\set{ e_{i,l} | l \geq j}. 
\]
We can take the Levi $L$ of $P$ to stabilizing the set of spaces $\set{F_j}$ with 
\[
F_j := \sspan\set{ e_{i,l} | l = j}.
\]
Suppose $G$ is a real special orthogonal group. Then there are two orbits of Lagrangian subspaces. 
And $P$ stabilizes the flag if and only if $G_\bC \cdot X$ is the induced orbit $\Ind_{P_\bC}^{G_\bC} 0$. Equivariantly, the label of $\cO$ is compatible with the Lagrangian $F_{\geq 1}$.

In the above setting, $R$ preserves the spaces $\set{F_j}$. 

}

\begin{lemma}\label{lemwmf002}
The representation $I(\chi)$ is completely reducible and multiplicity free. Moreover, every  irreducible subrepresentation of $I(\chi)$ belongs to $\Unipgen(\widetilde G)$. 
    \end{lemma}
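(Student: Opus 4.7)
The plan is to combine three ingredients: the unitarity of $I(\chi)$, control on its annihilator via infinitesimal character and associated variety, and the multiplicity-freeness of its wavefront cycle provided by Lemma~\ref{lemwf}. Since $\chi$ has finite order it takes values in roots of unity, hence is a unitary character of $\widetilde L$; normalized parabolic induction then produces a unitary Casselman--Wallach representation $I(\chi)$, which decomposes as a finite direct sum of irreducible subrepresentations. Being induced from a genuine character, $I(\chi)$ is itself genuine, and so is every irreducible summand.

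I would next verify two properties of $I(\chi)$: (i) its infinitesimal character equals $\lambda_{\check\CO}$, and (ii) its associated variety $\AV(I(\chi))$ equals $\overline{\CO_{\bC}}$. Property (ii) is standard for representations induced from a character of a parabolic: $\AV(I(\chi))$ is the closure of $G_{\bC}\cdot \mathfrak n_{\widetilde P}$, which by condition (b) of Lemma~\ref{lemf} coincides with the closure of $\Ind_{\l_{\bC}}^{\g_{\bC}}(0)=\CO_{\bC}$. Property (i) is a direct $\rho$-shift computation on the $\GL$-type Levi $\widetilde L$, combined with the differential data of $\chi$ on the cover; the result should match $\lambda_{\check\CO}=\tfrac{1}{2}({}^{L}h)$ by a case-by-case inspection. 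Combining (i) and (ii), every irreducible summand $\pi$ of $I(\chi)$ has infinitesimal character $\lambda_{\check\CO}$ and associated variety contained in $\overline{\CO_{\bC}}$; by the Barbasch--Vogan characterization (\cite{BVUni}, \cite[Section~2]{BMSZ1}), $\pi$ is then a special unipotent representation attached to $\check\CO$, and being genuine, $\pi\in\Unipgen(\widetilde G)$.

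For multiplicity-freeness, write $I(\chi)=\bigoplus_i m_i \pi_i$ with pairwise non-isomorphic irreducibles $\pi_i$ and multiplicities $m_i\ge 1$. By additivity of the wavefront cycle and Lemma~\ref{lemwf},
\[
\sum_{\mathbf o\in \widetilde G\backslash \sqrt{-1}\g^*\cap \CO}\mathbf o \;=\; \WF(I(\chi))\;=\;\sum_i m_i\,\WF(\pi_i),
\]
and each $\WF(\pi_i)$ is a non-zero non-negative integral combination of orbits drawn from $\widetilde G\backslash \sqrt{-1}\g^*\cap \CO$ (by (ii) applied to $\pi_i$). Comparing coefficients forces every $m_i=1$ and the supports of the $\WF(\pi_i)$ to be pairwise disjoint singletons, which proves multiplicity-freeness. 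The main obstacle I anticipate is the explicit verification of (i): one must track the $\rho$-shift against the specific Levi $\widetilde L$ of Lemma~\ref{lemf} and match the resulting vector against $\lambda_{\check\CO}$ in each of the $\Spin(p,q)$ and $\Spin^*(2n)$ cases; once this matching is secured, the remaining steps are formal consequences of Lemma~\ref{lemwf} and the Barbasch--Vogan framework.
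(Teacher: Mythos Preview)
Your proposal is correct and follows essentially the same route as the paper: unitarity of $\chi$ gives complete reducibility, the infinitesimal-character computation together with the bound on the associated variety coming from Lemma~\ref{lemwf} places every irreducible summand in $\Unipgen(\widetilde G)$, and the multiplicity-one coefficients in Lemma~\ref{lemwf} then force multiplicity-freeness. One small point to tighten: in the last step you need each $\WF(\pi_i)$ to be supported on orbits in $\CO$ itself (not on its boundary), and your justification ``by (ii) applied to $\pi_i$'' only yields $\AV(\pi_i)\subset\overline{\CO}$; the correct reason is that once $\pi_i\in\Unipgen(\widetilde G)$ its annihilator is the maximal primitive ideal $I_{\check\CO}$, whence $\AV(\pi_i)=\overline{\CO}$ exactly, so no summand can hide below $\CO$ and escape the coefficient comparison (and your displayed additivity $\WF(I(\chi))=\sum_i m_i\,\WF(\pi_i)$ is only valid once this equality of associated varieties is in hand).
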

\begin{proof}
With the easy calculation of the infinitesimal character of $I(\chi)$, Lemma \ref{lemwf} implies that every irreducible subquotient of $I(\chi)$ belongs to $\Unipgen(\widetilde G)$. Since $I(\chi)$ is unitarizable, it is completely reducible. Lemma \ref{lemwf} implies that it is also multiplicity free. 
\end{proof}

In the rest of this section, we focus on the real spin groups. 

\begin{lemma}\label{lemwmf2}
Suppose that $G=\Spin(p,q)$ and $\sharp(\Unipgen(\widetilde G))=1$. Then $I(\chi)$ is irreducible and $\Unipgen(\widetilde G)=\{I(\chi)\}$. 
\end{lemma}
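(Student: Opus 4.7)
The plan is to extract the conclusion from a straightforward cardinality argument combined with Lemma \ref{lemwmf002}, and no substantive new work should be required.

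First, by Lemma \ref{lemwmf002}, the representation $I(\chi)$ is completely reducible and multiplicity free, and every irreducible summand belongs to $\Unipgen(\widetilde G)$. In particular, $I(\chi)$ is non-zero since it is the normalized parabolic induction of a non-zero (unitary) character from the parabolic $\widetilde P$, so $I(\chi)$ admits at least one irreducible summand.

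Next, invoke the hypothesis $\sharp(\Unipgen(\widetilde G))=1$: every irreducible summand of $I(\chi)$ must be isomorphic to the unique element $\pi_0\in \Unipgen(\widetilde G)$. Combined with the multiplicity-freeness from Lemma \ref{lemwmf002}, this forces $I(\chi)$ to have exactly one irreducible summand, hence $I(\chi)\cong \pi_0$ is irreducible. The equality $\Unipgen(\widetilde G)=\{I(\chi)\}$ then follows immediately from the cardinality hypothesis.

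There is essentially no obstacle at this step; the genuine substance has already been carried by Theorem \ref{thm:Spinpq0} (the counting) and Lemma \ref{lemwmf002} (which in turn rests on Barbasch's wavefront-cycle computation in Lemma \ref{lemwf} together with unitarity of $I(\chi)$). The only small subtlety worth checking is that $\chi$, being a finite-order genuine character on $\widetilde L$, produces a unitary and hence completely reducible $I(\chi)$, so that the word ``subrepresentation'' in Lemma \ref{lemwmf002} may be legitimately upgraded to ``summand'' in the decomposition used here.
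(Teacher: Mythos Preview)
Your argument is correct and is exactly the unpacking of what the paper records as ``a direct consequence of Lemma \ref{lemwmf002}'': complete reducibility plus multiplicity-freeness plus the hypothesis $\sharp(\Unipgen(\widetilde G))=1$ force $I(\chi)$ to have a single irreducible summand. There is no difference in approach, only in level of detail.
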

\begin{proof}
This is a direct consequence of Lemma \ref{lemwmf002}.    
\end{proof}

\begin{lemma}\label{lemwmf33}
Suppose that $G=\Spin(p,q)$ and $\sharp(\Unipgen(\widetilde G))=2$. Then $I(\chi)$ is irreducible and $\Unipgen(\widetilde G)=\{I(\chi), I(\chi')\}$. Here  $\chi'$ is a genuine character on $\widetilde L$ of finite order that is not conjugate to $\chi$ by $\mathrm N_{\widetilde G}(\widetilde L)$. 
\end{lemma}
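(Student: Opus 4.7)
The approach is to distinguish $I(\chi)$ and $I(\chi')$ by their central characters on the central subgroup $Z\subset \widetilde G$, and then use a counting argument to force each to be irreducible. First, by Theorem~\ref{thm:Spinpq0}(b), the hypothesis $\sharp(\Unipgen(\widetilde G))=2$ forces the case $p=q$ with $\check \CO$ not very even. In this situation, the central subgroup $Z$ (the inverse image of $\{\pm 1\}\subset \SO(p,p)$ in $\widetilde G$, cf.\ the proof of Lemma~\ref{lemf2}(b)) has order four and admits exactly two genuine characters. Moreover, the proof of Lemma~\ref{lemf2}(b) shows that the two $\mathrm N_{\widetilde G}(\widetilde L)$-conjugacy classes of genuine finite-order characters on $\widetilde L$ are distinguished by their restrictions to $Z$; consequently $\chi|_Z$ and $\chi'|_Z$ exhaust the two genuine characters of $Z$.

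Next, since $Z$ is central in $\widetilde G$, every irreducible genuine representation of $\widetilde G$ acts through one of these two characters on $Z$. Let $k_1,k_2$ denote the number of elements of $\Unipgen(\widetilde G)$ whose central character on $Z$ is the $j$-th genuine character ($j=1,2$). By Lemma~\ref{lemwmf002}, $I(\chi)$ (respectively $I(\chi')$) is a nonzero, completely reducible, multiplicity-free sum of elements of $\Unipgen(\widetilde G)$, each of which necessarily has central character $\chi|_Z$ (respectively $\chi'|_Z$) on $Z$. Nonzeroness of both induced representations therefore forces $k_1\geq 1$ and $k_2\geq 1$; combined with $k_1+k_2=\sharp(\Unipgen(\widetilde G))=2$, we obtain $k_1=k_2=1$.

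This gives the lemma immediately. Indeed, $I(\chi)$ is then a multiplicity-free sum of the unique element of $\Unipgen(\widetilde G)$ with central character $\chi|_Z$ on $Z$, hence equal to that element; symmetrically, $I(\chi')$ is the other element of $\Unipgen(\widetilde G)$. The two are distinct (having distinct central characters on $Z$) and together exhaust $\Unipgen(\widetilde G)$.

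The only delicate point is the opening structural input: that $Z$ admits exactly two genuine characters and that $\mathrm N_{\widetilde G}(\widetilde L)$-non-conjugate finite-order genuine characters on $\widetilde L$ are separated by their restrictions to $Z$. Both facts are already present in the proof of Lemma~\ref{lemf2}(b) and need only to be recorded cleanly; no further analytic input (intertwining operators, $K$-types, or wavefront computations) is required.
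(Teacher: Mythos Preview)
Your proof is correct and follows essentially the same route as the paper's: both use the central subgroup $Z$ to separate $I(\chi)$ and $I(\chi')$ via their distinct restrictions $\chi|_Z\neq\chi'|_Z$, then invoke Lemma~\ref{lemwmf002} together with the count $\sharp(\Unipgen(\widetilde G))=2$ to force irreducibility. Your write-up is a bit more explicit (spelling out the $k_1+k_2=2$ counting and that $Z$ has exactly two genuine characters), but the underlying argument is the same.
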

\begin{proof}
The conditions of the lemma imply that $p=q$. As in the proof of Lemma \ref{lemf2}, let $Z$ denote the inverse image of $\{\pm 1\}$ under the covering homomorphism $\widetilde G\rightarrow \SO(p,q)$, which is a central subgroup of $\widetilde G$ and it is contained in $\widetilde L$. 
Then $\chi|_Z\neq \chi'|_Z$. Note that $Z$ acts on $I(\chi)$ and $I(\chi')$ through the characters $\chi|_Z$ and $\chi'|_Z$, respectively. Thus  $I(\chi)$ and $I(\chi')$ have no irreducible subrepresentation in common. In view of Lemma \ref{lemwmf002},  the lemma easily follows by the condition that  $\sharp(\Unipgen(\widetilde G))=2$.  
\end{proof}

In view of the counting result in Theorem \ref{thm:Spinpq0},  it is clear that Lemmas \ref{lemwmf2} and \ref{lemwmf33} imply part (a) of Theorem \ref{unip0}. Theorem \ref{thm:Spinpq03} is an easy consequence of part (a) of Theorem \ref{unip0}. By Clifford theory, Theorem \ref{thm:Spinpq4} is a direct consequence of Theorems \ref{thm:Spinpq0} and \ref{thm:Spinpq03}. 

\section{Genuine special unipotent representations of quaternionic spin groups}

In this section, we assume that $G=\Spin^*(2n)$. We will prove part (b) of Theorem \ref{unip0}. 
Recall that $\widetilde G=G$ and the set $\Unipgen(\widetilde G)$ is assumed to be nonempty. This implies that $n$ is even and $\check \CO$ is very even. Note that the Young diagram of $\CO$ is the transpose of that of $\check \CO$. Thus $\CO$ is also very even.  

Fix a Cartan involution $\theta$ on $G$ such that its fixed point group, to be denoted by $K$, is identified with $\widetilde{\oU}(n)$. Here $\widetilde{\oU}(n)$ is the double cover of $\oU(n)$ given by the square root of the determinant character on $\oU(n)$. The complexification of $K$, to be denoted by $K_\bC$, is identified with $\widetilde{\mathrm{GL}}_n(\C)$, the double cover of $\mathrm{GL}_n(\C)$ given by the square root of the determinant character on $\mathrm{GL}_n(\C)$.
The category of Casselman-Wallach representations of $G$ is equivalent to the category of $(\g_\bC,K_\bC)$-modules of finite length (see \cite[Chapter 11]{Wa2}). 
By using the trace form, we identify $\g_\bC^*$ with $\g_\bC$. Denote the Lie algebra of $K_\bC$ by $\k_\bC$, and write $\s_\C$ for its  orthogonal complement in $\g_\bC$ under the trace form.  

Let $\mathbf o'$ be a $K_\bC$-orbit in $\s_\C \cap \CO$. 
In what follows, we will first construct some auxiliary representations, to be precise irreducible $(\g_\C,K_\bC)$-modules $V_{\bfoo',N}$ ($N\in \bN$) with the associated variety $\baroop$.  

Let $\set{\ree,\rhh,\rff}$ be an $\fsl_2$-triple in $\g_\bC$ such that $\rhh\in \fkk_\bC$, $\ree, \rff\in \s_\bC$, and $\rff\in \bfoo'$. 
Then we have a decomposition
\[
\g_\bC=\bigoplus_{i\in \Z} \g_{\bC,2i}=\bar \u \oplus \l'_\bC\oplus \u,
\]
where $\g_{\bC,2i}$ is the eigenspace of the operator 
\[
\g_\bC\rightarrow \g_\bC, \quad x\mapsto [\mathrm h, x]
\]
with eigenvalue $2i$, and
\[
\l'_\bC:=\g_{\bC,0},\qquad \u:=\bigoplus_{i=1}^\infty \g_{\bC,2i}, \qquad \textrm{and}\qquad \bar\u:=\bigoplus_{i=1}^\infty \g_{\bC,-2i} .
\]
Write $\fqq=\l'_\bC\oplus \u$.
Then $\fqq$ is a $\theta$-stable parabolic subalgebra of $\g_\bC$. Note that
\[
 \l'_\bC\cong \g\l_{r_1}(\bC)\times \g\l_{r_2}(\bC)\times \dots \times \g\l_{r_k}(\bC).
\]
Write $C_\bC$ for the connected closed subgroup of $K_\C$ whose Lie algebra equals $\l'_\bC\cap \k_\bC$.

An easy calculation shows the following lemma. 

\begin{lemma}\label{rho0}
    Up to isomorphism, there is a unique one-dimensional $(\l'_\bC, C_\bC)$-module whose tensor square is isomorphic to $\wedge^{\dim \u} \u$. Moreover, this module is genuine. 
\end{lemma}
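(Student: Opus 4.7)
The plan is to establish existence, uniqueness, and genuineness separately, all as consequences of an explicit computation of $\rho(\u) := \tfrac{1}{2}\sum_{\alpha \in \Delta(\u,\fhh)}\alpha$ for a Cartan subalgebra $\fhh \subset \l'_\bC$.

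\textbf{Uniqueness} is immediate: two one-dimensional $(\l'_\bC, C_\bC)$-modules with isomorphic tensor squares differ by an algebraic character $\chi$ of $C_\bC$ with $\chi^{\otimes 2} \cong \mathbf{1}$. Since $C_\bC$ is connected by construction, its algebraic character lattice is torsion-free, so $\chi$ is trivial.

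For \textbf{existence}, the weight of $\wedge^{\dim\u}\u$ as a one-dimensional $\l'_\bC$-module is $2\rho(\u)$, so any square root at the Lie-algebra level is forced to be $\rho(\u)$. I would then check that $\rho(\u)$ lies in the algebraic character lattice of $C_\bC$. The essential point is that $K_\bC = \widetilde{\GL}_n(\bC)$ is the $\sqrt{\det}$ double cover of $\GL_n(\bC)$, which supplies exactly the half-integral powers of $\det$ needed to integrate $\rho(\u)$: after identifying the roots of $\u$ in standard coordinates on $\g_\bC$ and restricting to the Levi $\prod_i \gl_{r_i}(\bC)$, the linear functional $\rho(\u)$ appears as a combination of determinant characters on the factors, with half-integer exponents captured precisely by $\sqrt{\det}$.

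For \textbf{genuineness}, I would first show that the central $-1 \in K_\bC$ lies in $C_\bC$: since $\sqrt{\det}$ is not an algebraic character of $L^0 := p(C_\bC) \cong \prod_i \GL_{r_i}(\bC)$ (where $p : K_\bC \to \GL_n(\bC)$ is the covering map), the preimage $p^{-1}(L^0)$ is connected and must coincide with $C_\bC$, in particular containing $-1$. With the explicit formula for $\rho(\u)$ from the previous step, evaluating the candidate character at $-1$ reduces to a parity check on the total exponent of $\sqrt{\det}$ appearing in $\rho(\u)$; the very-even hypothesis on $\ckcO$ (all row lengths $r_i$ even) together with the structure of the Kostant--Sekiguchi parabolic $\fqq$ should make this parity work out to $-1$.

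\textbf{The main obstacle} is the explicit determination of $\rho(\u)$ as a character of $\prod_i \GL_{r_i}(\bC)$. One must pin down the $\theta$-stable parabolic $\fqq$ attached to $\rff \in \bfoo'$ via the chosen $\fsl_2$-triple, and then read off the half-sum of roots of $\u$ as a combination of determinant characters on the factors. Both the existence statement and the parity statement depend on this explicit form, and the very-even condition on $\cO$ is what makes the combinatorics tractable.
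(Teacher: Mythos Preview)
The paper does not supply a proof; it simply asserts that ``an easy calculation shows'' the lemma. Your outline is precisely the shape such a calculation would take, and the logic is sound: uniqueness from the connectedness of $C_\bC$, existence by locating $\rho(\u)$ in the analytic weight lattice $\bZ^n \sqcup (\tfrac12\mathbf{1}+\bZ^n)$ of $\widetilde{\GL}_n(\bC)$ restricted to $C_\bC$, and genuineness by checking that $\rho(\u)$ lands in the half-integral coset rather than in $\bZ^n$.

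One correction to your genuineness step: $L^0 = p(C_\bC)$ is the centralizer of $\rhh$ in $\GL_n(\bC)$, hence a block-diagonal subgroup $\prod_j \GL_{a_j}(\bC)$ where the $a_j$ are the eigenvalue multiplicities of $\rhh$ on $\bC^n$. This is governed by $\l'_\bC \cap \k_\bC$, not by $\l'_\bC$ itself, and need not be $\prod_i \GL_{r_i}(\bC)$; the latter describes $\l'_\bC$ as a Levi in $\fso_{2n}(\bC)$, and the two agree only when $\l'_\bC \subset \k_\bC$, which depends on the particular $K_\bC$-orbit $\bfoo'$. Your conclusion is unaffected, however: since $\det|_{L^0} = \prod_j \det_j$ admits no square root among algebraic characters of $L^0$, the preimage $p^{-1}(L^0)$ is connected and therefore equals $C_\bC$, so $-1 \in C_\bC$ in every case.
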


Let $\rho_\u$ be a  one-dimensional $(\l'_\bC, C_\bC)$-module as in Lemma \ref{rho0}. View it as a $(\mathfrak q,C_\bC)$-module via the trivial action of $\u$. Let $\mathcal R_\mathfrak q$ denote the functor from the category of $(\mathfrak q,C_\bC)$-modules to the category of $(\g,K_\bC)$-modules given by 
\[
  V_0\mapsto \mathrm{Hom}_{\mathcal H(\mathfrak q,C_\bC)}(\mathcal H(\g,K_\bC), V_0)_{K_\bC-\textrm{fin}}.
\]
Here $\mathcal H$ indicates the Hecke algebra, $\mathrm{Hom}_{\mathcal H(\mathfrak q,C_\bC)}(\mathcal H(\g,K_\bC), V_0)$ is viewed as an (left) $\mathcal H(\g,K_\bC)$-module via its right multiplication on itself, and  the subscript ${K_\bC-\textrm{fin}}$ indicates the $K_\bC$-finite parts. See \cite[Page 105]{KV} for more details. The functor $\mathcal R_\mathfrak q$ is left exact, and write $\mathcal R_\mathfrak q^i$ ($i\in \Z$) for its $i$-th derived functor.

For each  integer $N$,  put
\[
V_{\bfoo',N} := \mathcal R_\mathfrak q^S( (\rho_\u)^{\otimes (2N+1)}),\qquad \textrm{where } S:=\dim (\k_\bC\cap \u).
\]
Write $\pi_{\bfoo',N}$ for the Casselman-Wallach representation of $G$ corresponding to $V_{\bfoo',N}$.

Fix a Cartan subalgebra $\ftt_\C\subset \fkk_\C$, which is also a Cartan subalgebra of $\fgg_\bC$. Fix an arbitrary positive system of the root system $\Delta(\ftt_\bC, \l_\bC')$, and write $\rho_{\l_\bC'}\in \ftt_\bC^*$ for the half sum of the positive roots. By abuse of notation, still write $\rho_\u\in \ftt^*_\C$ for the weight of the module $\rho_\u$.  

\begin{lemma} \label{lemAq}
    When $N\geq 1$, $\pi_{\bfoo',N}$ is an irreducible genuine representation of $G$ with infinitesimal character $\rho_{\l_\bC'}+ 2N\rho_\u$ and associated variety $\baroop$. 
\end{lemma}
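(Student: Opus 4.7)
My plan is to derive Lemma \ref{lemAq} from the standard theory of cohomological induction in the good range, as developed in Knapp--Vogan \cite{KV} and Vogan's work on associated varieties. The essential point is that the weight of the inducing one-dimensional $(\fqq, C_\bC)$-module, namely $(2N+1)\rho_\u$, is sufficiently regular in the direction of $\u$ when $N \geq 1$, placing the parameter in the good range. Once this is in place, irreducibility, non-vanishing, the infinitesimal character, and the associated variety will all follow from standard theorems, and genuineness will be inherited from the inducing character.

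\textbf{Good range, irreducibility, and infinitesimal character.} By Lemma~\ref{rho0}, the character $\rho_\u$ has weight $\rho(\u)$ (the half-sum of the roots of $\u$), so $(\rho_\u)^{\otimes (2N+1)}$ has weight $(2N+1)\rho(\u)$. Since $\rho(\u)$ pairs non-negatively with every coroot $\alpha^\vee$ for $\alpha \in \Delta(\u)$, the multiplier $2N+1 \geq 3$ forces strict positivity, placing the parameter in the good range. The vanishing and irreducibility theorems of Knapp--Vogan (see e.g.\ \cite[Theorems 0.50 and 8.2]{KV}) then give $\mathcal R_\fqq^i((\rho_\u)^{\otimes(2N+1)}) = 0$ for $i \neq S$, together with the irreducibility of $V_{\bfoo', N}$; non-vanishing is witnessed by the bottom-layer $K$-type construction. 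The infinitesimal character $\rho_{\l_\bC'} + 2N\rho_\u$ then follows from the standard shift formula for $\mathcal R_\fqq^S$ applied to a one-dimensional module of weight $(2N+1)\rho_\u$.

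\textbf{Associated variety and genuineness.} For the associated variety, I would invoke Vogan's formula, valid in the (weakly) good range, that
\[
\AV(V_{\bfoo', N}) = \overline{K_\bC \cdot (\u \cap \s_\bC)}.
\]
Because $\fqq$ is built from an $\fsl_2$-triple $(\ree, \rhh, \rff)$ with $\rff \in \bfoo'$, the Jacobson--Morozov/Kostant--Rallis theory identifies the dense $K_\bC$-orbit in $\u \cap \s_\bC$ with $\bfoo'$, which gives the desired equality $\AV(V_{\bfoo', N}) = \baroop$. Finally, $\pi_{\bfoo', N}$ is genuine because $\rho_\u$ is genuine by Lemma~\ref{rho0} and cohomological induction preserves the action of the central subgroup $\{\pm 1\} \subset \widetilde G$. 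The most delicate point in this plan is the associated variety calculation: one must verify that the ``Richardson'' $K_\bC$-orbit attached to the $\theta$-stable parabolic $\fqq$ really is $\bfoo'$, rather than for instance $K_\bC \cdot \ree$ which could a priori differ from $K_\bC \cdot \rff$ within the same complex nilpotent orbit $\cO$; this requires careful bookkeeping with the Kostant--Sekiguchi correspondence.
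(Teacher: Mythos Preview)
Your overall strategy matches the paper's: verify the good range, invoke Knapp--Vogan for irreducibility and the infinitesimal character, then compute the associated variety via the standard formula for cohomologically induced modules.

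Two points deserve attention. First, your good-range verification is not quite right as written: from ``$\rho_\u$ pairs non-negatively with $\alpha^\vee$'' and ``$2N+1\geq 3$'' one cannot conclude strict positivity (zero times three is still zero), and in any case the good-range condition is on the resulting infinitesimal character $\rho_{\l'_\bC}+2N\rho_\u$, not on $(2N+1)\rho_\u$ alone; the term $\rho_{\l'_\bC}$ can pair negatively with coroots in $\u$. The paper handles this by invoking \cite[Corollary~5.100]{Knapp}, which gives both $\langle\rho,\check\alpha\rangle>0$ and $\langle\rho_\u,\check\alpha\rangle>0$ for every weight $\alpha$ of $\u$; writing $\rho_{\l'_\bC}+2N\rho_\u = \rho + (2N-1)\rho_\u$ then yields strict positivity for $N\geq 1$.

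Second, and more substantively: for the functor $\mathcal R_\fqq$ as defined in the paper (coinduction from $\fqq$, not from $\bar\fqq$), the associated-variety formula reads $\AV(\pi_{\bfoo',N})=\overline{K_\bC\cdot(\s_\bC\cap\bar\u)}$, with $\bar\u$ rather than $\u$ (this is \cite[Proposition~3.4]{BV.W}; see also \cite[Lemma~2.7]{Ko} and \cite[Proposition~5.4]{Tr}). This small change dissolves exactly the worry you flag at the end: since $\rff\in\s_\bC\cap\bar\u$ by construction, one gets $\baroop\subset\overline{K_\bC\cdot(\s_\bC\cap\bar\u)}$ for free. The paper then closes with a dimension comparison rather than a direct identification of the dense orbit: $\overline{\cO}=\overline{G_\bC\cdot\bar\u}$ by \cite[Theorem~7.3.3]{CM}, and then \cite[Corollary~5.20]{Vo89} forces $\overline{K_\bC\cdot(\s_\bC\cap\bar\u)}=\baroop$. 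No Kostant--Rallis analysis of the Richardson $K_\bC$-orbit, and no comparison of $K_\bC\cdot\ree$ with $K_\bC\cdot\rff$, is required.
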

\begin{proof}
  Write $\rho:=\rho_{\l'_\bC}+\rho_\u$. By \cite[Corollary 5.100]{Knapp}, for every weight  $\alpha\in \t_\bC^*$ of $\fuu$, 
  \[
\inn{\rho}{\check\alpha}>0\qquad \textrm{and}\qquad \inn{\rho_\fuu}{\check\alpha}>0.
  \]
  Here $\check \alpha\in \t_\bC$ is the coroot corresponding to $\alpha$. Then
  \[
  \la \rho_{\l_\bC'}+ 2N\rho_\u, \check \alpha\ra>0,
  \]
  and hence 
    the $(\fll',C_\bC)$-module $(\rho_\fuu)^{\otimes (2N-1)}$ is in the good range (see \cite[Definition~0.4.9]{KV} for the definition of good range).   
    So $\pi_{\bfoo',N}$ is irreducible since cohomological induction in the good range preserves irreducibility (see \cite[Theorem~8.2]{KV}).  
    
   By a result of Barbasch-Vogan \cite[Proposition~3.4]{BV.W}, the associated variety of $\pi_{\bfoo',N}$ equals  $\overline{K_\C \cdot (\fss_\C \cap \bar\fuu )}$  (the Zariski closure).  See also \cite[Lemma~2.7]{Ko} and \cite[Proposition 5.4]{Tr}. 
    Note that $\overline{\cO} \subset \overline{ G_\bC \cdot \bar\fuu}$ since $f\in \bar\fuu$. Then by \cite[Theorem 7.3.3]{CM}, we conclude that    \[
    \overline{\cO} =\overline{ G_\bC \cdot \bar\fuu}.
    \]
    Similarly, $\baroop \subset \overline{ K_\bC \cdot (\fss_\C \cap \bar\fuu)}$ since $f\in \fss_\C \cap \bar\fuu$. Then by \cite[Corollary~5.20]{Vo89}, we conclude that 
    \[
    \overline{K_\C \cdot (\fss_\C \cap \bar\fuu )} = \baroop.
    \]
   %
    The computation of the infinitesimal character of $\pi_{\bfoo',N}$ is straightforward (see \cite[Proposition~0.48]{KV}).  
    This finishes the proof.  
\end{proof}

\begin{lemma}
   There exists a representation in $\Unipgen(G)$ whose associated variety equals $\baroop$. 
\end{lemma}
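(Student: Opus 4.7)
My plan is to pass from the irreducible cohomologically induced modules $\pi_{\bfoo',N}$ ($N\geq 1$) of Lemma \ref{lemAq} to a representation at the special unipotent infinitesimal character $\lamck$ via coherent continuation, and to verify that the resulting module lies in $\Unipgen(G)$ with associated variety $\baroop$.

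First I would check, by a direct computation of $\half\,({}^Lh)$ using the Jordan block structure of $\ckcO$, that $\rho_{\l'_\bC}+2N\rho_\u$ lies in the same $Q_0$-coset as $\lamck$ for every integer $N$. This uses that every row length of $\ckcO$ is even (since $\ckcO$ is very even) and occurs with even multiplicity, so that the entries of $\lamck$ pair up block-by-block against those of $\rho_{\l'_\bC}$ with integer differences, while $\rho_\u$ itself is integral. Accordingly, setting $\Lambda_1:=\lamck+Q_0$, for $N\geq 1$ large enough to make $\rho_{\l'_\bC}+2N\rho_\u$ regular, there is a unique coherent family $\Psi\in \Coh_{\Lambda_1}(\cKgenG)$ with $\Psi(\rho_{\l'_\bC}+2N\rho_\u)=[\pi_{\bfoo',N}]$. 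I then set $\pi:=\Psi(\lamck)\in\cKgenG$.

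By standard properties of coherent continuation (\cite{Vg,BV.W}), the annihilator of every irreducible constituent of $\pi$ contains the primitive ideal attached to $\ckcO$, and the associated variety of each such constituent is contained in $\baroop$. Hence, once $\pi$ is shown to be nonzero, any irreducible summand of $\pi$ automatically lies in $\Unipgen(G)$; moreover, since such a summand has associated variety of pure dimension $\tfrac{1}{2}\dim\CO$, and $\baroop$ is itself a $K_\bC$-orbit closure of this maximal dimension inside $\s_\C\cap\overline{\CO}$, that associated variety must equal $\baroop$.

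The main obstacle is showing that $\pi=\Psi(\lamck)\neq 0$. The plan is to combine the cell-theoretic analysis of Section \ref{secpf1star} with Vogan's theory of associated varieties and cell representations. By \eqref{eq:Dstar} the Lusztig left cell $\LCO$ is the singleton $\{\tau_b\}$, and Lemma \ref{lem:Spin*b} shows that $[\tau_b:\Coh_{\Lambda_1}(\cK(G_0))]$ matches the total count of $\sharp(\Unipgen(G))$ given by Theorem \ref{thm:Spinpq0}. Since the associated variety of $\pi_{\bfoo',N}$ is contained in $\overline{\CO}$, the $W(\Lambda_1)$-submodule of $\Coh_{\Lambda_1}(\cKgenG)$ generated by $\Psi$ is supported on the $\tau_b$-isotypic component, and the evaluation map $\Psi\mapsto\Psi(\lamck)$ is nonzero on this isotypic component. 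This forces $\pi\neq 0$ and completes the construction.
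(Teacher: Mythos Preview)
Your overall strategy coincides with the paper's: place $\pi_{\bfoo',1}$ in a coherent family $\Psi$ and exploit it at the unipotent infinitesimal character. The gap is in the nonvanishing step. First, $\pi:=\Psi(\lamck)$ is only a virtual class in $\cKgenG$, so ``irreducible summand of $\pi$'' and ``its associated variety'' are not well-defined as stated; a nonzero virtual class can be an alternating sum of irreducibles each with associated variety strictly smaller than $\baroop$. Second, and more seriously, the inference ``the $W$-submodule generated by $\Psi$ is supported on the $\tau_b$-isotypic component, and evaluation at $\lamck$ is nonzero on that component, hence $\Psi(\lamck)\neq 0$'' does not follow. The containment $\AV(\pi_{\bfoo',N})\subset\overline{\cO}$ only places $\Psi$ in $\Coh_{\Lambda_2,\overline{\cO}\cap\s_\bC}$, which also contains the cells attached to boundary orbits, so the cyclic submodule need not be $\tau_b$-isotypic; and even granting a nonzero $\tau_b$-projection, knowing that \emph{some} vector in the isotypic component survives evaluation does not force your particular $\Psi$ to survive. (A side issue: you set $\Lambda_1:=\lamck+Q_0$, which is the paper's $\Lambda_2$, and then invoke Lemma~\ref{lem:Spin*b}, which is about the paper's $\Lambda_1=\lamck+\halfone+Q_0$; Lemma~\ref{lem:iso} makes this harmless, but the overload is confusing.)

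The paper avoids evaluating $\Psi$ at $\lamck$ altogether. It uses the $W$-equivariant Goldie rank polynomial map of Barbasch--Vogan and King on $\Coh_{\Lambda_2,\baroop}(\cKgen(G))$: by Joseph's theorem the image of $\Psi$ generates a copy of $\tau_b$, so $[\tau_b:\Coh_{\Lambda_2,\baroop}(\cKgen(G))]\geq 1$. Feeding this into \cite[Proposition~5.1]{BMSZ1}, together with $[1_{W_{\lamck}}:\tau_b]=1$ (from \cite[Corollary~5.30]{BVUni}) and Vogan's purity result \cite[Theorem~8.4]{Vo89}, gives directly
\[
\sharp\{\pi\in\Unipgen(G):\AV(\pi)=\baroop\}
=\sharp\{\pi\in\Unipgen(G):\AV(\pi)\subset\baroop\}
\geq [\tau_b:\Coh_{\Lambda_2,\baroop}(\cKgen(G))]\geq 1,
\]
which is exactly the statement of the lemma. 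This bypasses both the virtual-representation issue and the need to control the specific value $\Psi(\lamck)$.
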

\begin{proof}
   We follow an idea of Barbasch-Vogan in \cite{BV.W}. 
   Retain the notation in 
   Section~\ref{secpf1}. 
   Recall that $\ckcO$ is very even.
   Let $\Lambda_2 = \lambda_\ckcO + Q_0\subset \fhh^*$. 
   It is easy to see that 
    $W(\Lambda_2) = \sfW'_n$ equals the abstract Weyl group $W$ of $G_\bC$ and  
   $\Set{\tau_b}$ is the double cell of $\Irr(W)$ associated to the  orbit $\cO$ via the Springer correspondence. See \eqref{eq:Dstar}. 
   
   For each $K_\bC$-stable closed subset $S$ in $\fss_\bC$,
    consider the $\sfW'_n$-submodule
    \[
   \Coh_{\Lambda_2,S}(\cKgen (G)):= \Set{ \Psi\in \Coh_{\Lambda_2}(\cKgen (G))|
    \AV(\Psi(\lambda))\subset S \text{ for all $\lambda\in \Lambda_2$}} 
    \]
    of the coherent continuation representation $\Coh_{\Lambda_2}(\cKgen (G))$. Here $\AV(\Psi(\lambda))$ denotes the union of associated varieties of irreducible genuine representations of $G$ occurring in $\Psi(\lambda)$ with non-zero coefficient. 
  
Let $\Delta^+(\t_\bC,\g_\bC)\subset \t_\bC^*$ denote the union of  $\Delta^+(\t_\bC,\l'_\bC)$ and the weights of $\fuu$, which is a positive system of the root system $\Delta(\t_\bC,\g_\bC)$. Using this positive system, we identify $\t_\bC$ with the abstract Cartan subalgebra $\h$ of $\g_\bC$. Note that $\rho_{\fll_\bC'} +2 \rho_{\fuu}$ belongs to $\Lambda_2$ and  is  regular in $\ftt_\C^*$.
   
  Let  $\Psi_1$ be the element in $\Coh_{\Lambda_2}(\cKgen (G))$ such that $\Psi_1(\rho_{\fll_\bC'}+2\rho_\fuu) = \pi_{\bfoo',1}$. 
  Then $\Psi_1 \in \Coh_{\Lambda_2,\baroop}(\cKgen (G)) $
  by Lemma~\ref{lemAq}.  

    Let $\bC[\fhh^*]$ denote the space of polynomial functions on $\fhh^*$. 
    By a result of Barbasch-Vogan and King \cite[Theorem~1.2]{King},  there is a well-defined $W$-equivariant linear map 
    \[
    \Coh_{\Lambda_2,\baroop}(\cKgen (G)) \rightarrow \bC[\fhh^*]
    \]
    sending $\Psi_1$ to the Goldie rank polynomial of the annihilator ideal of $V_{\baroop,1}$.   
    By a result of Joseph \cite[2.10]{Jo85}, the Goldie rank polynomial  generates an irreducible $W$-module which is isomorphic to $\tau_b$. 
 Therefore $\tau_b$ occurs in the $W$-module $\Coh_{\Lambda_2,\baroop}(\cKgen (G))$.

 Write $W_{\lambda_\ckcO}$ for the stabilizer of $\lambda_\ckcO$ in $W$, 
  and $1_{W_{\lambda_\ckcO}}$ for the trivial representation of $W_{\lambda_\ckcO}$. By \cite[Corollary 5.30]{BVUni},     $1_{W_{\lambda_\ckcO}}$ occurs in $\tau_b=(j_{W_{\lambda_\ckcO}}^W\sgn)\otimes \sgn$ with multiplicity-one.  Here $j$ stands for the $j$-operation, as in \cite[Section 11.2]{Carter}.    
  We therefore conclude that 
    \[
    \begin{split}
    & \sharp \Set{\pi\in \Unipgen(G)| \AV(\pi)=\baroop} \\
    = &\, \sharp  \Set{\pi\in \Unipgen(G)| \AV(\pi)\subset \baroop} \qquad \text{(by \cite[Theorem~8.4]{Vo89})}\\
    = & \sum_{\sigma \in \Irr(W) } [1_{W_{\lambda_\ckcO}}:\sigma] [\sigma: \Coh_{\Lambda_2,\baroop}(\cKgen (G)) ]
    \quad  \quad \text{(by \cite[Proposition~5.1]{BMSZ1})}\\
    \geq &  [\tau_b: \Coh_{\Lambda_2,\baroop}(\cKgen (G)) ]\geq 1.   
    \end{split}
    \]
   Here $\AV$ stands for the associated variety. 
   This completes the proof. 
\end{proof}

Together with the counting result in Theorem \ref{thm:Spinpq0}, the above proposition implies that there is a unique representation $\pi_{\mathbf o'}\in \Unipgen(G)$ whose associated variety equals $\baroop$, and 
\[
  \Unipgen(G)=\{\pi_{\mathbf o'}\mid \mathbf o'\in K_\bC\backslash(\s_\bC\cap \CO)\}. 
\]
Recall that the wavefront cycle agrees with the weak associated cycle (\cite{SV}).  By  using Lemma \ref{lemwf} and  considering the wavefront cycle, we conclude that
\[
 I(\chi)\cong \bigoplus_{\mathbf o'\in K_\bC\backslash(\s_\bC\cap \CO)} \pi_{\mathbf o'}.
\]
This easily implies part (b) of Theorem \ref{unip0}. 

\begin{remark} In \cite{MT}, Matumoto and Trapa consider degenerate principal series representations of the linear group $\Sp(p, q), \SO^*(2n),$ or $\RU(m, n)$ with integral infinitesimal character. In particular, they prove that each irreducible constituent of maximal Gelfand–Kirillov dimension is a derived functor module. 
\end{remark}


\begin{acknowledgement}
D. Barbasch is supported by NSF grant, Award Number 2000254. J.-J. Ma is supported by the National Natural Science Foundation of China (Grant No. 11701364 and Grant No. 11971305) and Xiamen University Malaysia Research Fund (Grant No. XMUMRF/2022-C9/IMAT/0019). B. Sun is supported by  National Key R \& D Program of China (No. 2022YFA1005300 and 2020YFA0712600) and the New Cornerstone Science Foundation.  C.-B. Zhu is supported by MOE AcRF Tier 1 grant R-146-000-314-114, and
Provost’s Chair grant E-146-000-052-001 in NUS. 

C.-B. Zhu is grateful to Max Planck Institute for Mathematics in Bonn, for its warm hospitality and conducive work environment, where he spent the academic year 2022/2023 as a visiting scientist. 

Some statements in the paper are (additionally) verified for low rank groups with the atlas software. J.-J. Ma thanks J. Adams for patiently answering questions on the atlas software. 
\end{acknowledgement}

\end{document}